\theoremstyle{plain}
\newtheorem{thmx}{Theorem}
\renewcommand{\thethmx}{\Alph{thmx}} 
\newtheorem{thm}{Theorem}[section]  
\newtheorem{lem}[thm]{Lemma}
\newtheorem*{pri}{Principle}
\newtheorem{prop}[thm]{Proposition}
\newtheorem{cor}[thm]{Corollary}
\newtheorem{corx}[thmx]{Corollary}
\newtheorem{claim}[thm]{Claim}
\theoremstyle{definition}
\newtheorem{dfn}[thm]{Definition}
\newtheorem{assumpsion}[thm]{Assumption}
\theoremstyle{remark}
\newtheorem{rem}[thm]{Remark}
\numberwithin{equation}{subsection}  
\theoremstyle{plain}
\newlist{thmlist}{enumerate}{1}
\setlist[thmlist]{wide = 0pt, labelwidth = 2em, labelsep*=0em, itemindent = 0pt, leftmargin = \dimexpr\labelwidth + \labelsep\relax, noitemsep,topsep = 1ex, font=\normalfont, label=(\roman*), ref=\thethm.(\roman{thmlisti})}
\newlist{thmenum}{enumerate}{1} 
\setlist[thmenum]{wide = 0pt, labelwidth = 2em, labelsep*=0em, itemindent = 0pt, leftmargin = \dimexpr\labelwidth + \labelsep\relax, noitemsep,topsep = 1ex, font=\normalfont, label=(\roman*), ref=\thethmx.(\roman{thmenumi})}
\DeclareMathOperator{\Ima}{Im}
\newsavebox{\@brx}
\newcommand{\llangle}[1][]{\savebox{\@brx}{\(\m@th{#1\langle}\)}%
	\mathopen{\copy\@brx\kern-0.5\wd\@brx\usebox{\@brx}}}
\newcommand{\rrangle}[1][]{\savebox{\@brx}{\(\m@th{#1\rangle}\)}%
	\mathclose{\copy\@brx\kern-0.5\wd\@brx\usebox{\@brx}}}
\crefname{lem}{Lemma}{Lemmas}
\crefname{thm}{Theorem}{Theorems}
\crefname{proposition}{Proposition}{Propositions}
\crefname{dfn}{Definition}{Definitions}
\crefname{rem}{Remark}{Remarks}
\crefname{cor}{Corollary}{Corollaries}
\crefname{corx}{Corollary}{Corollaries}
\crefname{problem}{Problem}{Problems}
\crefname{thmx}{Theorem}{Theorems}
\crefname{claim}{Claim}{Claims}
\crefname{assumption}{Assumption}{Assumptions}
\crefname{main}{Main Theorem}{Main Theorems}
\def\ep{\varepsilon}
\def\rank{{\rm rank}}
\newcommand{\diae}{{}^\diamond\! E}
\newcommand*{\rom}[1]{\expandafter\@slowromancap\romannumeral #1@}
\newcommand{\crefnames}[3]{%
	\@for\next:=#1\do{%
		\expandafter\crefname\expandafter{\next}{#2}{#3}%
	}%
}
\newcommand{\sA}{\mathscr{A}}
\newcommand{\sC}{\mathscr{C}}
\newcommand{\sE}{\mathscr{E}}
\newcommand{\sF}{\mathscr{F}}
\newcommand{\sH}{\mathscr{H}}
\newcommand{\sL}{\mathscr{L}}
\newcommand{\sO}{\mathscr{O}}
\newcommand{\cE}{\mathcal E}
\newcommand{\cX}{\mathcal X}
\newcommand{\cO}{\mathcal O}
\newcommand{\kL}{\mathfrak L} 
\newcommand{\bC}{\mathbb{C}}
\newcommand{\bD}{\mathbb{D}}
\newcommand{\bH}{\mathbb{H}}
\newcommand{\bR}{\mathbb{R}}
\def\db{\bar{\partial}}
 \def\d{\partial} 
\def\hess{\sqrt{-1}\partial\overline{\partial}}
\def\sn{\sqrt{-1}}
\def\dls{\mathbb{D}_h^{\lambda\star}}
\def\nak{\text{\tiny  Nak}}
\def\gri{\text{\tiny  Gri}}
\def\loc{\, \text{\tiny  loc}}
\def\End{\text{\small  End}}
\def\Dom{\text{\small  Dom}}
\def\vol{\text{\small  Vol}}
\def\Herm{\text{\small  Herm}}
\def\dls{\mathbb{D}_h^{\lambda\star}}
\def\lmd{\lambda}
\begin{document} 
	
	\title[Vanishing Theorem for tame  harmonic bundles]{Vanishing Theorem for tame    \\ harmonic bundles  via $L^2$-cohomology}

	\author{Ya Deng}  
	\address{CNRS, Institut \'Elie Cartan de Lorraine, Universit\'e de Lorraine, F-54000 Nancy,
		France.}
	
	\email{ya.deng@univ-lorraine.fr \quad ya.deng@math.cnrs.fr}
	
	\urladdr{https://ydeng.perso.math.cnrs.fr} 
	
		\author{Feng Hao} 
		\address{Department of Mathematics, KU LEUVEN,
Celestijnenlaan 200B, bus 2400
B-3001 Leuven, Belgium}
			\email{feng.hao@kuleuven.be}  
	 \urladdr{https://www.kuleuven.be/wieiswie/nl/person/00133186}		
	\date{\today} 
	\begin{abstract} 
  Using $L^2$-methods, we prove a    vanishing theorem  for tame  harmonic bundles over quasi-compact K\"ahler manifolds   in a very general setting. As a special case, we give a completely new proof of the Kodaira type vanishing theorems for  Higgs bundles due to Arapura.  To prove our vanishing theorem, we construct a fine resolution of  the Dolbeault complex  for tame  harmonic bundles via the complex  of sheaves of $L^2$-forms, and we  establish the H\"ormander $L^2$-estimate and solve $(\bar{\partial}_E+\theta)$-equations for Higgs bundles  $(E,\theta)$. 
\end{abstract} 
\subjclass[2010]{14C30, 14F17, 	32L20, 14J60}
\keywords{tame harmonic bundle, (parabolic) Higgs bundle, vanishing theorem, H\"ormander $L^2$-estimate,  Bochner technique, $L^2$-cohomology,  Simpson-Mochizuki correspondence}
	\maketitle
	
\tableofcontents
\section{Introduction} 
\subsection{Main result}
Let $(X,\omega)$ be a compact K\"ahler manifold and let $D$ be a simple normal crossing divisor on  {$X$}. Let $(E,\theta,h)$ be a tame   harmonic bundle over $X-D$ so that $\theta$ has nilpotent residues on $D$(see \cref{sec:tame} for the precise definition), and let $\diae$ be the subsheaf   of $\iota_*E$ consisting of sections whose norms with respect to $h$ have sub-polynomial growth  {(see \cref{sec:prolongation})}, where $\iota:X-D\hookrightarrow X$ is the inclusion. By Simpson-Mochizuki, $\diae$ is a locally free coherent sheaf, and  $(E,\theta)$ extends to a logarithmic Higgs bundle
	$$\theta:\diae\to \diae\otimes \Omega_X^1(\log D)$$
such that
$$\theta\wedge \theta=0.$$
 We refer to \cref{sec:prolongation} for more details. 
 
In this paper, we prove the following vanishing theorem.
\begin{thmx}[=\cref{thm:main}]\label{main}
	Let $(X,\omega)$ be a compact  K\"ahler manifold of dimension $n$, and let $D$ be a simple normal crossing divisor on $X$. Let $(E,\theta)$ be  a  tame   harmonic bundle    on $X-D$ so that $\theta$ has nilpotent residues on $D$, and let $(\diae,\theta)$ be the extension of $(E,\theta)$ on $X$ as  introduced above.   Let $L$ be a {holomorphic} line bundle on $X$ equipped with a smooth {Hermitian} metric $h_L$ 
	so that its curvature $\sqrt{-1}R(h_L)\geq 0$ and has at least $n-k$ positive eigenvalues {at every point on $X$ as a real (1,1)-form}. Let {$B$} be a nef line bundle on $X$.     Then
	for the following (Dolbeault) complex of sheaves
	\begin{align}\label{eq:dol}
	{\rm Dol}(\diae,\theta):=\diae\xrightarrow{\wedge\theta}\diae\otimes \Omega^1_X(\log D) \xrightarrow{\wedge\theta}\cdots\xrightarrow{\wedge\theta}\diae\otimes \Omega^n_X(\log D) 
	\end{align}
	the hypercohomology
	$$
	\mathbb{H}^i\big(X, {\rm Dol}(\diae,\theta)\otimes L\otimes {B} \big)=0
	$$
	for any $i>n+k$.
\end{thmx}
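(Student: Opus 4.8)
The plan is to prove the theorem by passing to $L^2$-cohomology on $X-D$ and then running a Bochner--Kodaira--Nakano argument there. The two ingredients one needs for this are exactly the ones announced in the abstract: a \emph{fine} resolution of ${\rm Dol}(\diae,\theta)$ by sheaves of $L^2$-forms, and a H\"ormander-type $L^2$-estimate for the Higgs operator $\bar\partial_E+\theta$.

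\emph{Step 1: reduction to $L^2$-harmonic forms.} Fix a complete K\"ahler metric $\widetilde\omega$ on $X-D$ of Poincar\'e type along $D$, equip $E$ with the harmonic metric $h$ (the metric defining $\diae$), equip $L$ with $h_L$, and — using that $P$ is nef while $X$ is compact — equip $P$ with a smooth metric $h_P^\varepsilon$ such that $\sqrt{-1}R(h_P^\varepsilon)\ge-\varepsilon\,\omega$ for a small $\varepsilon>0$; note that, $X$ being compact, the $L^2$-conditions below do not depend on which smooth metric on $P$ is chosen. Put $\mathbb{D}''=\bar\partial_E+\theta+\bar\partial_{L\otimes P}$. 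By the $L^2$-Poincar\'e/Dolbeault lemma for tame harmonic bundles (whose construction is one of the main technical points of the paper), the complex of sheaves of $(E\otimes L\otimes P)$-valued forms that are locally $L^2$ with respect to $h\otimes h_L\otimes h_P^\varepsilon$ and $\widetilde\omega$, with differential $\mathbb{D}''$, is a fine resolution of ${\rm Dol}(\diae,\theta)\otimes L\otimes P$. Since $X$ is compact this identifies $\mathbb{H}^i(X,{\rm Dol}(\diae,\theta)\otimes L\otimes P)$ with the $L^2$-$\mathbb{D}''$-cohomology of $X-D$; the former is finite-dimensional, so — once the closed-range/Hodge-decomposition part of the package below is in place — the latter is computed by the space $\mathcal{H}^i$ of $L^2$ $\mathbb{D}''$-harmonic $i$-forms. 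Thus it suffices to show $\mathcal{H}^i=0$ for $i>n+k$.

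\emph{Step 2: the Bochner identity.} Write $\mathbb{D}'=\partial_h+\bar\theta+\partial_{L\otimes P}$, so that $\mathbb{D}:=\mathbb{D}'+\mathbb{D}''$ is the $(L\otimes P)$-twisted flat connection. On $(X-D,\widetilde\omega)$ the Higgs-bundle K\"ahler identities $[\Lambda,\mathbb{D}'']=-\sqrt{-1}(\mathbb{D}')^{*}$ and $[\Lambda,\mathbb{D}']=\sqrt{-1}(\mathbb{D}'')^{*}$ hold, and — crucially — since $(E,h)$ is harmonic, Hitchin's equation forces the curvature part coming from $E$ to vanish, leaving $\mathbb{D}'\mathbb{D}''+\mathbb{D}''\mathbb{D}'=1_E\otimes\Xi$ with $\Xi:=\sqrt{-1}R(h_L)+\sqrt{-1}R(h_P^\varepsilon)$. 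Hence on the complete manifold one obtains, for $v$ in the common domain,
$$\|\mathbb{D}''v\|^{2}+\|(\mathbb{D}'')^{*}v\|^{2}=\|\mathbb{D}'v\|^{2}+\|(\mathbb{D}')^{*}v\|^{2}+\big\langle[\Xi,\Lambda]v,v\big\rangle ,$$
together with $\Delta_{\mathbb{D}}=\Delta_{\mathbb{D}'}+\Delta_{\mathbb{D}''}$. In particular every $v\in\mathcal{H}^i$ satisfies $\langle[\Xi,\Lambda]v,v\rangle=-\|\mathbb{D}'v\|^{2}-\|(\mathbb{D}')^{*}v\|^{2}\le0$.

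\emph{Step 3: the positivity — and the main obstacle.} It remains to force $v=0$ for $i>n+k$. Since $\sqrt{-1}R(h_L)\ge0$ has at least $n-k$ positive eigenvalues everywhere on the compact $X$, its $(k{+}1)$-st smallest eigenvalue is bounded below by a positive constant, so $\Xi$ is semipositive with at least $n-k$ positive eigenvalues up to an $O(\varepsilon)$-error. One would like to conclude $\langle[\Xi,\Lambda]v,v\rangle\ge0$ on $i$-forms with $i>n+k$ and combine this with the displayed identity to get $v=0$; this is the crux of the proof. The subtlety is that this positivity \emph{fails pointwise}: on mixed $(p,q)$-forms of total degree $>n+k$ the operator $[\Xi,\Lambda]$ need not be semipositive once the positive eigenvalues of $\Xi$ are small relative to the others, and, worse, $[\Xi,\Lambda]$ degenerates near $D$ because $\widetilde\omega$ blows up there. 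The way I would get around this is to establish the estimate \emph{globally} rather than pointwise — i.e. to prove the H\"ormander $L^2$-estimate for $(\bar\partial_E+\theta)$-equations advertised in the abstract — exploiting that the harmonic-bundle K\"ahler identities make $\Delta_{\mathbb{D}}$ commute with the Lefschetz operator $\widetilde\omega\wedge(\cdot)$, so that the relevant forms carry a hard-Lefschetz/primitive structure which upgrades the curvature term to a genuine positive lower bound in degrees $>n+k$, while the behaviour near $D$ is controlled using the Poincar\'e growth of $\widetilde\omega$ together with the tameness of $(E,h,\theta)$. Once this estimate is in hand it also supplies the closed-range statement used in Step 1, and the identity of Step 2 gives $\mathcal{H}^i=0$ for $i>n+k$; letting $\varepsilon\to0$, which affects none of the spaces involved, finishes the proof. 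I expect this last step — having to reconcile the mixed bidegrees, the mere $k$-positivity (not strict positivity) of $L$, and the Poincar\'e degeneration along $D$ — to be the technical heart of the argument.
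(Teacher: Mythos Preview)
Your outline captures the overall shape of the argument---reduce to $L^2$-cohomology via a fine resolution, then run a Bochner--Kodaira--Nakano identity for the Higgs operator---but there are two genuine gaps, and in both cases the fix the paper uses is \emph{not} the one you suggest.

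\textbf{Gap 1: the fine resolution does not work with the raw harmonic metric.} You equip $E$ with the harmonic metric $h$ itself and then invoke ``the $L^2$-Poincar\'e/Dolbeault lemma for tame harmonic bundles''. The paper explicitly warns that with $g=h$ and $\omega=\omega_P$ this is a hard problem already for variations of Hodge structure, requiring the Schmid/Cattani--Kaplan--Schmid norm estimates. What the paper does instead is perturb $h$ to $h(\bm a,N)=h\cdot\prod|\sigma_i|^{2a_i}(-\prod\log|\sigma_i|^2)^N$ for small $a_i>0$ and large $N$. This perturbation (i) makes the twisted bundle acceptable with curvature bounds good enough that one can solve $\bar\partial$ locally by Demailly's theorem, and (ii) guarantees, via a mean-value inequality, that $L^2$ holomorphic sections extend into $\diae$. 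Both ingredients are needed to prove the quasi-isomorphism \eqref{dia:quasi}; with the unperturbed $h$ neither is available in general.

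\textbf{Gap 2: the positivity is obtained pointwise by the Girbau trick, not globally by hard Lefschetz.} You correctly observe that for a generic Poincar\'e metric the operator $[\Xi,\Lambda_{\widetilde\omega}]$ is not semipositive on mixed $(p,q)$-forms of total degree $>n+k$, and that it degenerates near $D$. Your proposed remedy---exploit primitive decomposition and commutation of $\Delta_{\mathbb D}$ with $\widetilde\omega\wedge\cdot$ to upgrade to a global estimate---does not do the job (and I do not see how it could: primitivity controls the commutator with $\widetilde\omega$, not with $\Xi$). The paper's solution is to \emph{build the line-bundle curvature into the K\"ahler metric}: set
\[
\omega_{\bm a,N}=\varepsilon_2\,\omega+\sqrt{-1}R(h_{\mathscr L}),\qquad h_{\mathscr L}=h_Lh_P\prod|\sigma_i|^{2a_i}(-\prod\log|\sigma_i|^2)^N,
\]
with $\varepsilon_2$ a small fixed multiple of $\inf_X\gamma_{k+1}$. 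Then the eigenvalues $\lambda_i$ of $\sqrt{-1}R(h_{\mathscr L})$ relative to $\omega_{\bm a,N}$ satisfy $\lambda_i\in[-\,\text{tiny},1]$ everywhere and $\lambda_i\ge 1-\text{tiny}$ for $i>k$, which by the standard eigenvalue formula forces $\langle[\sqrt{-1}R(h_{\mathscr L}),\Lambda_{\omega_{\bm a,N}}]f,f\rangle\ge\frac12|f|^2$ \emph{pointwise} for every $(p,q)$ with $p+q>n+k$. This simultaneously handles the mixed bidegrees, the mere $k$-positivity of $L$, and the degeneration along $D$ (since $\omega_{\bm a,N}$ itself is Poincar\'e-type), and feeds directly into the H\"ormander estimate \cref{cor:AN}. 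The nef bundle $P$ and the small $a_i$'s are absorbed as $O(\varepsilon_1)$-errors in \eqref{eq:nef}. With this choice the argument becomes a direct solvability statement for $D''$-equations rather than a harmonic-form/closed-range argument, so the Hodge-decomposition issues in your Step~1 also disappear.
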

\cref{main} seems new even if the  tame harmonic bundle $(E,\theta,h)$ comes from a complex  variation of polarized Hodge structures {over $X-D$}. It indeed interpolates the Kodaira-Akizuki-Nakano type vanishing theorems for nilpotent Higgs bundles    \cite[Theorem 1]{Ara19}   by Arapura (in the case that $L$ is ample, see \cref{cor:main}), and the log Girbau vanishing theorem by Huang-Liu-Wan-Yang \cite[Corollary 1.2]{HLWY16} (in the case that $(E,\theta)=(\sO_{X-D},0)$, see \cref{rem:Yang}). We   stress here that our proof of \cref{main} is  essentially self-contained (in particular we do not apply the deep Simpson-Mochizuki correspondence) and  is purely  in characteristic $0$ (since we are working on K\"ahler manifolds), comparing to the celebrated vanishing theorem by  Arapura   \cite{Ara19}  whose proof is in characteristic $p$ (see \cref{sec:previous} for more details). 
The main technique  in the proof of \cref{main} is a new application of $L^2$-methods to tame harmonic bundles, and we hope that it can bring some new input in the study of $L^2$-cohomology for Higgs bundles. Let us also mention a few byproducts of our proof: we construct explicitly complexes of  sheaves of $L^2$-forms for tame   Higgs bundles which are quasi-isomorphic to the Dolbeault complexes \eqref{eq:dol} (see \cref{thm:quasi}) in a similar manner (but using  different metric) as \cite{Zuc79}  in which Zucker did this for  variation of polarized Hodge structures over a quasi-projective curve; we also establish the H\"ormander $L^2$-estimate and solvability criteria for $(\db_E+\theta)$-equations for Higgs bundles $(E,\theta)$ (see \cref{thm:L2,cor:AN}).

If we apply the Simpson-Mochizuki correspondence \cite{Sim90,Moc06} for parabolic Higgs bundles on projective manifolds to  \cref{main}, we can obtain the following vanishing theorem for parabolic Higgs bundles whose Higgs fields have nilpotent residues (see \cref{sec:tame} for the precise definition).

\begin{corx}[=\cref{cor:main}]\label{corx}
Let $X$	be a complex projective manifold of dimension $n$, and let $D$ be simple normal crossing divisor on $X$. Let $(E, {}_{\bm{a}}E,\theta)$ be a locally abelian poly-stable parabolic Higgs bundle on $(X,D)$ with trivial parabolic Chern classes so that $\theta$ has nilpotent residues on $D$.   Let $L$ be a {holomorphic} line bundle on $X$ equipped with a smooth  {Hermitian} metric $h_L$ 
	so that its curvature $\sqrt{-1}R(h_L)\geq 0$ and has at least $n-k$ positive eigenvalues  {at every point on $X$ as a real (1,1)-form}. Let {$B$} be a nef line bundle on $X$.   Then  for the weight 0 filtration $\diae$  of $(E,{}_{\bm{a}}E,\theta)$, one has
	$$
	\mathbb{H}^i\big(X, {\rm Dol}(\diae,\theta)\otimes L\otimes {B} \big)=0
	$$
	for any $i>n+k$.   
\end{corx}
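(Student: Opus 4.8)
The plan is to deduce \cref{cor:main} directly from \cref{main}; the only additional input is the Simpson--Mochizuki correspondence, so almost all of the work has already been done in \cref{main}.

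First I would convert the parabolic datum into a harmonic bundle. Since $(E,{}_{\bm{a}}E,\theta)$ is a poly-stable, locally abelian parabolic Higgs bundle on the projective manifold $(X,D)$ with trivial parabolic degrees, the Simpson--Mochizuki correspondence \cite{Sim90,Moc09} (recalled in \cref{sec:prolongation}) endows $E|_{X-D}$ with a harmonic metric $h$ such that $(E|_{X-D},\theta|_{X-D},h)$ is a \emph{tame} harmonic bundle on $X-D$ --- tameness is part of the conclusion of the correspondence, the residues of $\theta$ along the components of $D$ having constant, hence bounded, eigenvalues. Thus \cref{main} is applicable to $(E|_{X-D},\theta|_{X-D},h)$.

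Next I would match the two Dolbeault complexes. Let $(\diae,\theta)$ be the logarithmic Higgs bundle obtained from $(E|_{X-D},\theta|_{X-D},h)$ by prolonging by sub-polynomial growth, as in the hypotheses of \cref{main}. By the asymptotic norm estimates of Simpson and Mochizuki for tame harmonic bundles, this prolongation --- together with its logarithmic Higgs differential $\wedge\theta$ --- is canonically identified with the weight-zero filtration of the parabolic Higgs bundle $(E,{}_{\bm{a}}E,\theta)$, i.e.\ precisely the sheaf denoted $\diae$ in the statement of \cref{cor:main}. With this identification, the complex ${\rm Dol}(\diae,\theta)$ of \eqref{eq:dol} coincides with the Dolbeault complex of the weight-zero filtration appearing in \cref{cor:main}. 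Applying \cref{main} with the given line bundle $L$ and metric $h_L$ (so that $\sqrt{-1}R(h_L)\geq 0$ has at least $n-k$ positive eigenvalues) and the given nef line bundle $P$, one obtains $\mathbb{H}^i\big(X,{\rm Dol}(\diae,\theta)\otimes L\otimes P\big)=0$ for every $i>n+k$, which is the assertion of \cref{cor:main}.

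The only step that is not purely formal --- and hence the main obstacle --- is the identification in the previous paragraph: that the sub-polynomial prolongation $\diae$ of the harmonic bundle agrees, as a logarithmic Higgs sheaf, with the weight-zero parabolic subsheaf. This is exactly the point at which the hypotheses ``locally abelian'' and ``trivial parabolic degrees'' are used, and it relies on the local structure theory of tame harmonic bundles near $D$; since this identification is part of the Simpson--Mochizuki package recalled in \cref{sec:prolongation}, \cref{cor:main} follows immediately from \cref{main}.
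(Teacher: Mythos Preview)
Your proposal is correct and follows essentially the same route as the paper: invoke the Simpson--Mochizuki correspondence to realize the given poly-stable, locally abelian parabolic Higgs bundle with trivial parabolic degrees as the prolongation of a tame harmonic bundle on $X-D$, so that its weight-zero piece coincides with the sheaf $\diae$ of \cref{main}, and then apply \cref{main} verbatim. The paper's argument is the same one-line reduction, and your added discussion of why the two notions of $\diae$ agree is exactly the content of the correspondence as cited.
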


For the notions in  \cref{corx} we refer to \cref{sec:tame}, {\cref{sec:parahiggs} and \cref{sec:parabolic}} for more details.



\subsection{Idea of the proof}
Let us briefly explain the main idea  of our proof of \cref{main}. We first construct a complex of $L^2$ fine sheaves for the tame  harmonic bundle  $(E,\theta,h)$ whose Higgs field $\theta$ has nilpotent residues on $D$, which is quasi-isomorphic to the Dolbeault complex
\begin{equation}\label{dia:dol}
\begin{tikzcd}
{\rm Dol}(\diae,\theta):=\diae \arrow[r,"\theta"] &\diae\otimes \Omega^1_X(\log D)  \arrow[r,"\theta"]&\cdots  \arrow[r,"\theta"] &\diae\otimes \Omega^n_X(\log D) 
\end{tikzcd}
\end{equation}

 For the given K\"ahler metric $\omega$ on $X$ (we denote the restricted K\"ahler form $\omega|_{X-D}$ again by $\omega$ over $X-D$) and a smooth Hermitian metric $g$ for $E$ over $X-D$,  we let $\kL^{m}_{(2)}(X,E)_{g,\omega}$ be the sheaf on $X$ of germs of  $E$-valued
$m$-forms $\sigma$ with measurable coefficients so that $\lvert \sigma\rvert_{g,\omega}^2$ is locally integrable and $(\db+\theta)(\sigma)$ exists weakly as  a locally $L^2$, $E$-valued
$(m+1)$-form. Here the $L^2$ norms $\lvert \sigma\rvert_{g,\omega}^2$ are induced by $\omega$ on differential
forms and by $g$ on elements in $E$.    Since $(\db+\theta)^2=0$, it thus gives rise to a complex of fine sheaves
\begin{align} \label{eq:L^2}
\kL^{0}_{(2)}(X,E)_{g,\omega}\stackrel{\db+\theta}{\to} \cdots\stackrel{\db+\theta}{\to} \kL^{2n}_{(2)}(X,E)_{g,\omega}
\end{align}
As the harmonic metric $h$ is a \emph{canonical} metric on   $E$, it is quite natural to  make the choice that $g$ is the harmonic metric $h$. Also, we replace the k\"ahler form $\omega$ by a Poincar\'e-type metric $\omega_P$ over $X-D$ as
  \cite{Zuc79,CKS87,KK87}. However, even for the case when $(E,\theta)$ comes from a  variation of polarized Hodge structures over $X-D$, it turns out to be  a quite difficult problem that $(\kL_{(2)}^{\bullet}(X,E)_{h,\omega_P},\db+\theta)$ is quasi-isomorphic to $ {\rm Dol}(\diae,\theta)$, and one essentially cannot avoid  the delicate norm estimate for Hodge metrics near $D$ in \cite{Sch73,Kas85,CKS86} (see \emph{e.g.} \cite{Zuc79,JYZ07}). In this paper, we make a slight perturbation {$h_{\bm{a},N}$} of  the harmonic metric $h$ (see \cref{lem:perturbe} for more details)   as  \cite[\S 4.5.3]{Moc02}  so that {$h_{\bm{a},N}$} will degenerate mildly, albeit the norm of harmonic metric $h$ for $\diae$ is  of sub polynomial growth. {Also, we slightly perturb the K\"ahler metric $\omega$ on $X-D$ into a complete K\"ahler metric $\omega_{\bm{a},N}$ (see \cref{lem:choice}), which is mutually bounded with the Poincar\'e metric $\omega_P$ near the divisor $D$.} This construction indeed brings us several advantages (among others): we can prove that $(\kL_{(2)}^{\bullet}(X,E)_{{h_{\bm{a},N},\omega_{\bm{a},N}}},\db+\theta)$ is indeed quasi-isomorphic to ${\rm Dol}(\diae,\theta)$, and the negative contribution of the curvature $(E,\theta,{h_{\bm{a},N}})$ is small enough which can be absorbed completely by the curvature $\sn R(h_L)$ of any (partially) positive metrized line bundle $(L,h_L)$.

Thus we have the following $L^2$ fine resolution of ${\rm Dol}(\diae,\theta)\otimes L$

\begin{align}\label{eq:L^2 global}
(L_{(2)}^{\bullet}(X-D,E\otimes L|_{X-D})_{{h_{\bm{a},N}}\cdot h_L,{\omega_{\bm{a},N}}},D''),
\end{align}
 where $D'':=\db_{E\otimes L}+\theta\otimes \vvmathbb{1}_L$ satisfying $D''^2=0$ (Here we assume $M=\sO_X$ for simplicity). We then reduce the proof of  \cref{main} to the vanishing of $i$-th cohomology of the complex of global sections of \eqref{eq:L^2 global} for  $i>\dim X+k$.  To prove this, we first  generalize the $L^2$-estimate  by H\"ormander, Andreotti-Vesentini, Skoda, Demailly  and others to Higgs bundles.  Roughly speaking, we prove that under certain curvature conditions for Higgs bundles $(E,\theta)$ over $X-D$, we can solve the $D''$-equation as the $\db$-equation in a similar way (see \cref{thm:L2,cor:AN}). We then choose the perturbation {$h_{\bm{a},N}$} of $h$ carefully so that such required curvature condition can be fulfilled and it enables us to prove the vanishing result for the $L^2$-cohomology of \eqref{eq:L^2 global}.  This idea of solving $D''$-equation for Higgs bundles using $L^2$-method  seems a new ingredient as we are aware of.




\subsection{Previous results} \label{sec:previous}
For $X$ a complex projective manifold with a simple normal crossing divisor $D$, Arapura \cite{Ara19} gives a vanishing theorem for semistable Higgs bundles $(E, \theta)$ over $X-D$ with trivial parabolic structure, trivial Chern classes and nilpotent Higgs field $\theta$.  In the spirit of the algebraic proof of the Kodaira vanishing theorem by Deligne-Illusie \cite{DL87}, the proof of Arapura's vanishing theorem is reduced to the mod $p$-setting and boils down to a periodic sequence of Higgs bundles $(E_i, \theta_i):=B^i(E, \theta)$  through an operator $B$ raised from the absolute Frobenius morphism, which is due to    Lan-Sheng-Yang-Zuo \cite{LSZ19,LSYZ13} and Langer \cite{Lan15}.  The dimension of the cohomology $\mathbb{H}^i(X, {\rm Dol}(E_i, \theta_i)\otimes L^{p^i})$ is non-decreasing for $\{(E_i, \theta_i)\}$ and ample line bundle $L$, then Arapura's vanishing theorem follows from Serre's vanishing theorem. With his vanishing theorem, Arapura reproves the Saito's vanishing theorem (see, e.g.  Popa \cite{Pop16}) for variation of polarized Hodge structures with unipotent monodromy on the complement of a normal crossing divisor on any complex projective manifold. 
\subsection*{Acknowledgements}
The first author would like to thank Professors Olivier Biquard, Junyan Cao, Jean-Pierre Demailly, Takuro Mochizuki, Carlos Simpson for answering his questions, and Professors Jian Xiao, Xiaokui Yang, Kang Zuo for their comments on this paper.  He also would like to thank the support and perfect working condition provided by   IH\'ES. The second author is supported by grant G097819N of Nero Budur and  1280421N from the Research Foundation Flanders (FWO). This work was started while both authors were participating the workshop on \enquote{mixed Hodge modules and Hodge ideals} at the university of Angers on 1-5 April 2019. We 
would like to thank  the organizers   for their hospitality. Last but not least, we are grateful to the referee for his/her careful readings and very helpful comments to improve this manuscript.

	\section*{Notations and conventions} 
\begin{itemize}[wide = 0pt, labelwidth = 2em, labelsep*=0em, itemindent = 0pt, leftmargin = 0pt, noitemsep,topsep = 1ex, font=\normalfont ]
	\item  A couple $(E,h)$ is a \emph{Hermitian vector bundle} on a complex manifold $X$ if $E$  {is} a holomorphic vector bundle on $X$ equipped with a smooth hermitian metric $h$.  $\db_E$ denotes the complex structure of $E$, and we sometimes simply write $\db$ if no confusion arises.
	\item Two hermitian metrics $h$ and $\tilde{h}$ of a holomorphic vector bundle on $X$ are \emph{mutually bounded} if $C^{-1}h\leq \tilde{h}\leq Ch$ for some constant $C>0$, and we shall denote by $h\sim h'$.
	\item For a hermitian vector bundle $(E,h)$ on a complex manifold,  $R(E, h)$ or simply $R(h)$ denotes its Chern curvature. 
	\item $\Delta$ denotes the unit disk in $\bC$.
	\item The complex manifold $X$ in this paper are always assumed to be connected and of dimension $n$.
	\item Throughout the paper we always work over the complex number field $\bC$.
\end{itemize}

\section{Technical preliminary}

\subsection{Higgs bundle and tame harmonic bundle}\label{sec:tame}
	In this section we   recall the   definition of Higgs bundles and tame harmonic bundles. We refer the readers to  \cite{Sim88,Sim90,Sim92,Moc02,Moc07} for further details.
\begin{dfn}\label{Higgs}
	Let $X$ be a complex manifold. A \emph{Higgs bundle} on $X$  is a pair $(E,\theta)$ where $E$ is a holomorphic vector bundle with $\db_E$ its complex structure, and $\theta:E\to E\otimes \Omega^1_X$ is a holomorphic one form with value in $\End(E)$, say \emph{Higgs field},  satisfying $\theta\wedge\theta=0$.
\end{dfn}

 Let  $(E,\theta)$ be a Higgs bundle  over a complex manifold $X$.  Write $D'':=\db_E+\theta$. Then $ D''^2=0$. 
 Suppose $h$ is a smooth hermitian metric of $E$.  Denote by $\mathbb{\partial}_h+\db_E$  the Chern connection with respect to $h$, and $\theta^*_h$ be the adjoint of $\theta$ with respect to $h$. Write $D_h':= {\partial}_h+\theta_h^*$. The metric $h$ is  \emph{harmonic} if the operator $D_h:=D_h'+D''$
is integrable, that is, if  $D_h^2=0$.
\begin{dfn}[Harmonic bundle] A harmonic bundle on a complex manifold $X$ is
	a Higgs bundle $(E,\theta)$ endowed with a  harmonic metric $h$.
\end{dfn}
Let $X$ be an $n$-dimensional complex manifold, and let $D$ be
a simple normal crossing divisor on $X$.
\begin{dfn}(Admissible coordinate)\label{def:admissible} Let $p$ be a point of $X$, and assume that $\{D_{j}\}_{ j=1,\ldots,\ell}$ 
	be components of $D$ containing $p$. An \emph{admissible coordinate} around $p$
	is the tuple $(U;z_1,\ldots,z_n;\varphi)$ (or simply  $(U;z_1,\ldots,z_n)$ if no confusion arises) where
	\begin{itemize}
		\item $U$ is an open subset of $X$ containing $p$.
		\item there is a holomorphic isomorphism   $\varphi:U\to \Delta^n$ so that  $\varphi(D_j)=(z_j=0)$ for any
		$j=1,\ldots,\ell$.
	\end{itemize} 
We shall write $U^*:=U-D$,   $U(r):=\{z\in U\mid |z_i|<r, \, \forall i=1,\ldots,n\}$ and $U^*(r):=U(r)\cap U^*$.  
\end{dfn}
For any  harmonic bundle $(E,\theta,h)$, let $p$ be any point of X, and $(U;z_1,\ldots,z_n)$  be an admissible coordinate around  $p$. On $U$, we have the description:
\begin{align}\label{eq:local}
\theta=\sum_{j=1}^{\ell}f_jd\log z_j+\sum_{k=\ell+1}^{n}g_kdz_k
\end{align}

\begin{dfn}[Tameness]\label{def:tameness}
 Let $t$ be a formal variable. We have the polynomials $\det (f_j-t)$,  and $\det (g_k-t)$, whose coefficients are holomorphic functions defined over $U^*$. When the functions can be extended
	to the holomorphic functions over $U$, the harmonic bundle is called \emph{tame} at $p$.  A harmonic bundle is \emph{tame} if it is tame at each point.
\end{dfn}

\begin{dfn}[Nilpotent residues]\label{def:nilpotency}
Let $(E, \theta)$ be a Higgs bundle on $X-D$. We say that $\theta$ has \emph{nilpotent residues on $D$} if for each component $D_j$ of $D$ and any  point $p\in D_j$ one has $\det(f_j-t)|_{U\cap D_j}=(-t)^{\rank E}$.   
\end{dfn}
 
	\begin{rem}\label{rmk:nilpotency-compare}
	One should notice that the above definition   introduced in \cite[p 435]{Moc02}  is more general than that in \cite[Theorem 1]{Ara19}, where the nilpotency of Higgs field $\theta$ is defined to be the local matrix of $\theta$ is nilpotent.  We   refer   readers to \cite{Ara19} for more details.
	\end{rem}

Recall that the Poincar\'e metric $\omega_P$ on $(\Delta^*)^\ell\times \Delta^{n-\ell}$ is described as 
$$
\omega_P=\sum_{j=1}^{\ell}\frac{\sqrt{-1}dz_j\wedge d\bar{z}_j}{|z_j|^2(\log |z_j|^2)^2}+\sum_{k=\ell+1}^{n}\frac{\sqrt{-1}dz_k\wedge d\bar{z}_k}{(1-|z_k|^2)^2}.
$$
Note that 
$$
\omega_P=-\hess \log \big(\prod_{j=1}^{\ell}(-\log |z_j|^2)\cdot \prod_{k=\ell+1}^{n}(1-|z_k|^2)\big).
$$

For the tame  harmonic bundle so that the Higgs field has nilpotent residues, we have the following crucial norm estimate for Higgs field $\theta$. The   one dimensional case  is due to Simpson \cite[Theorem 1]{Sim90} and Mochizuki \cite[Proposition 4.1]{Moc02} in general. 
\begin{thm} \label{thm:moc}
	Let $(E,\theta,h)$ be a tame  
	harmonic bundle on $X-D$ so that  $\theta$ has nilpotent residues on $D$. Let $f_j,g_k$ be the matrix-valued holomorphic  functions as in \cref{def:tameness}.  Then there exists a positive constant $C>0$ satisfying that
	\begin{align*} 
	&|f_j|_h\leq \frac{C}{-\log |z_j|^2},  \quad &\mbox{for} \quad j=1,\ldots,\ell;\\
	&|g_k|_h \leq C, \quad &\mbox{for} \quad  k=\ell+1,n.
	\end{align*} 
	In other words, the norm 
	$$
	| \theta|_{h,\omega_P}\leq C  
	$$
holds over  $U^*(r)$	for some constant $C>0$ and $0<r< 1$. \qed
\end{thm}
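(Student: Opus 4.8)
The plan is to establish the matrix-norm bounds $|f_j|_h\le C$ and $|g_k|_h\le C$ on a small admissible neighbourhood $U^*(r)$, which is the primary assertion; the reformulation $|\theta|_{h,\omega_P}\le C\sum_{j}(-\log|z_j|^2)$ then follows at once from the elementary computations $|d\log z_j|_{\omega_P}=-\log|z_j|^2$ and $|dz_k|_{\omega_P}\le 1$. I would follow the classical arguments of Simpson in the one–variable case and Mochizuki in general, so I only sketch the structure. The first, purely algebraic, step is to extract from tameness a bound on the \emph{spectrum}: by hypothesis the coefficients of $\det(tI-f_j)$ and of $\det(tI-g_k)$ extend holomorphically across $D$, hence are bounded on $U^*(r)$ for $r\ll 1$, so all eigenvalues of $f_j$ and $g_k$ are uniformly bounded there. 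This controls the semisimple part of $\theta$ but not its operator norm, since a priori a nilpotent part of $f_j$ could blow up (as for $\bigl(\begin{smallmatrix}0 & z^{-1}\\ 0 & 0\end{smallmatrix}\bigr)$); ruling this out is exactly where the harmonic metric is used.

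Second, I would derive a Bochner-type differential inequality from the harmonic equation. Since $h$ is harmonic, $D_h^2=0$, and the $(1,1)$-component of this identity is the Hitchin equation $R(h)=-[\theta,\theta^*_h]$. The endomorphisms $f_j,g_k$ are $\bar\partial_E$-holomorphic sections of $\mathrm{End}(E)$, so the standard estimate for the logarithm of the norm of a holomorphic section gives, wherever $f_j\ne 0$,
\[
\sqrt{-1}\,\partial\bar\partial\log|f_j|_h^2 \ \ge\ -\,\frac{\big\langle \sqrt{-1}\,[R(h),f_j],\,f_j\big\rangle_h}{|f_j|_h^2},
\]
and likewise for $g_k$. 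Contracting with the Poincar\'e metric $\omega_P$ and substituting the Hitchin equation, the right–hand side is bounded below by $-C\,|\theta|_{h,\omega_P}^2$, because $\sqrt{-1}\,\Lambda_{\omega_P}[\theta,\theta^*_h]$ equals, up to bounded cross terms, $\sum_j(\log|z_j|^2)^2[f_j,f_j^{*_h}]+\sum_k[g_k,g_k^{*_h}]$. Thus the nonnegative function $u:=|\theta|_{h,\omega_P}^2$ satisfies an inequality of Gauss–curvature type, $\Delta_{\omega_P}\log u\ge -C\,u$ on $U^*(r)$, the discrepancy coming only from bounded curvature terms of the model metric on $\Omega^1_X(\log D)$.

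The last step is to close the estimate. In one variable, combining the spectral bound of Step 1 with the inequality of Step 2 and a weak a priori growth bound (of at most polynomial order), an Ahlfors–Schwarz / maximum-principle argument on the punctured disk equipped with the complete metric $\omega_P$ upgrades the bound to $u\le C(\log|z|^2)^2$, i.e.\ $|f|_h\le C$; this is Simpson's theorem, which I take as the base case. In several variables one restricts to disks transverse to $D$: along the $z_k$-directions the bundle is smooth, so $|g_k|_h$ is trivially locally bounded, and along a generic line in a $z_j$-direction one obtains a tame harmonic bundle on a punctured disk, to which the one–variable estimate applies. The content of \cite[Lemma 8.3]{Moc07} is then to make this estimate \emph{uniform} as the slice degenerates toward the deeper intersection strata of $D$, which yields $|f_j|_h\le C$ on all of $U^*(r)$. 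I expect this last point to be the main obstacle: the inequality of Step 2 is self-referential — its curvature term is controlled by $u$ itself rather than by a constant — so no naive maximum principle applies, and the several–variable uniformity requires feeding the one–variable estimate back in together with a careful analysis of the metric near the deeper strata.
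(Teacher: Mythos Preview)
The paper does not prove this theorem at all: it is stated with a terminal \textsf{qed} and attributed to Simpson \cite{Sim90} for curves and to Mochizuki \cite[Lemma 8.3]{Moc07} in general, so there is no proof in the paper to compare your proposal against. What you have written is a reasonable outline of how the cited arguments actually go --- spectral bound from tameness, Bochner/Hitchin inequality for $|\theta|^2_{h,\omega_P}$, Ahlfors--Schwarz on the punctured disk, and then Mochizuki's uniform slicing near the deeper strata --- and you correctly identify the self-referential curvature term and the several-variable uniformity as the nontrivial points. If you intend this as a genuine proof rather than a literature summary, those two points would need to be carried out in full; as written it is a sketch that defers to the same references the paper does.
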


\subsection{Curvature property of Higgs bundles}
Suppose now $(E, \theta)$ is a   Higgs bundle of rank $r$ equipped with a Hermitian metric $h$ over a  K\"ahler manifold $(X,\omega)$ of dimension $n$ . 

We make the following assumption for $(E,\theta,h)$ throughout this section .
\begin{assumpsion}\label{assum}
	$\db_E\theta_h^*=0$.
\end{assumpsion}
 Consider the connection $D_h:=D_h'+D''$ (see the paragraph after \cref{Higgs}).  \Cref{assum} is equivalent to that  $\partial_h\theta=0$.  Hence one has the curvature
\begin{align}\label{eq:assumption}
F(h)\coloneqq D_h^2=[D_h',D'']=R(h)+[\theta,\theta^*] \in A^{1,1}(X,\End(E)),
\end{align}
where $R(h)\coloneqq (\partial_h+\db_E)^2$. Moreover, one can easily see that $(\sqrt{-1}F(h))^{*}=\sqrt{-1}F(h)$. In other words, $\sqrt{-1}F(h)$ is a $(1,1)$-form with $\Herm(E)$-value, where $\Herm(E)$ is the hermitian endomorphism of $(E,h)$. 

By Simpson \cite{Sim88}, one has the following K\"ahler identities:
\begin{align}\label{eq:kahler}
\sqrt{-1}[\Lambda_\omega,D'']=(D_h')^*\\\label{eq:kahler2}
\sqrt{-1}[\Lambda_\omega,D_h']=-(D'')^* 
\end{align}
where $(D_h')^*$ and $(D'')^*$ are the formally adjoint operators of $D_h'$ and $D''$ with  respect to $h$ and $\omega$, and $\Lambda_{\omega}$ is the adjoint operator of $\wedge \omega$ with respect to the Hodge inner product on differential forms.  Define the Laplacians
\begin{eqnarray*}
	\Delta'&=&D'_hD_h'^*+(D'_h)^*D'_h\\
	\Delta''&=&D''(D'')^*+(D'')^*D''
\end{eqnarray*}
 A computation can easily derive the  following equality.
\begin{lem}[Bochner-Kodaira-Nakano identity for Higgs bundles]\label{lem:Bochner}
Let $(E,\theta)$ be a Higgs bundle endowed with a smooth Hermitian metric $h$, which satisfies \cref{assum}. Then
	\begin{align}
	\Delta''=\Delta'+[\sqrt{-1}F(h),\Lambda_\omega]
	\end{align}
\end{lem}  
\begin{proof}
	By \eqref{eq:kahler2}, one has
	$$
	\Delta''=D''(D'')^*+(D'')^*D''=-\sqrt{-1}[D'',[\Lambda_\omega,D_h']].
	$$
	By the Jacobi identity, one has
	\begin{align*}
	\Delta''&= \sqrt{-1}[D_h',[\Lambda_\omega,D'']]-\sqrt{-1}[\Lambda_{\omega},[D'_h,D'']]\\
	&\stackrel{\eqref{eq:kahler}}{=}[D_h',(D_h')^*]+[\sqrt{-1}[D'_h,D''],\Lambda_{\omega}]\\
	&\stackrel{\eqref{eq:assumption}}{=}\Delta'+[\sqrt{-1}F(h),\Lambda_\omega],
	\end{align*}
	which is the desired equality.
\end{proof}
\subsection{Notions of  positivity}
Let us recall the definitions of Nakano positivity and Griffiths negativity for vector bundles in \cite[Chapter \rom{7} \S 6]{Dembook}. 
Let $E$ be a holomorphic vector bundle  endowed with a smooth Hermitian metric $h$. For any $x\in X$, let $e_1,\ldots,e_r$ be a frame of $E$ at $x$, and let $e^1,\ldots,e^r$  be its dual in  $E^*$. Let $z_1,\ldots,z_n$ be a local coordinate centered at $x$. Its curvature tensor is written as
$$
 R(h) =R_{j\bar{k}\alpha}^\beta dz_j\wedge d\bar{z}_k\otimes e^\alpha\otimes e_\beta
$$
Set  $R_{j\bar{k}\alpha\bar{\beta}}:=h_{\gamma\bar{\beta}}R_{j\bar{k}\alpha}^\gamma$, where $h_{\gamma\bar{\beta}}=h(e_\gamma,e_\beta)$.  $(E,h)$ is called \emph{Nakano semi-positive} at $x$ if 
$$
\sum_{ j,k,\alpha,\beta}R_{j\bar{k}\alpha\bar{\beta}}u^{j\alpha} \overline{u^{k\beta}} \geq 0
$$ 
for any $u=\sum_{j,\alpha}u^{j\alpha}\frac{\partial}{\partial z_j}\otimes e_\alpha\in (T_{X}^{1,0}\otimes E)_x$. $(E,h)$ is called \emph{Griffiths semi-negative} at $x$ if 
$$
\sum_{ j,k,\alpha,\beta}R_{j\bar{k}\alpha\bar{\beta}}\xi^j\zeta^\alpha \overline{\xi^k} \overline{\zeta^{\beta}} \leq 0
$$ 
for any $\xi=\sum_{j}\xi^j\frac{\partial}{\partial z_j}\in  T_{X,x}^{1,0}$ and any $\zeta=\sum_{\alpha}\zeta^\alpha e_\alpha\in E_x$.  

We write 
$$R(h)\geq_{\nak} \lambda (\omega\otimes \vvmathbb{1}_E) \quad \mbox{ for } \lmd\in \mathbb{R}$$
if
$$
\sum_{ j,k,\alpha,\beta}(R_{j\bar{k}\alpha\bar{\beta}}-\lambda\omega_{j\bar{k}}h_{\alpha\bar{\beta}})(x)u^{j\alpha}\overline{u^{k\beta}}   \geq 0
$$
for    any $x\in X$ and any $u=\sum_{j,\alpha}u^{j\alpha}\frac{\partial}{\partial z_j}\otimes e_\alpha\in (T_{X}^{1,0}\otimes E)_x$. 
We denote by
$$R(h)\leq_{\gri} \lambda (\omega\otimes \vvmathbb{1}_E) $$
if
$$
\sum_{ j,k,\alpha,\beta}(R_{j\bar{k}\alpha\bar{\beta}}-\lambda\omega_{j\bar{k}}h_{\alpha\bar{\beta}})(x)\xi^j\zeta^\alpha \overline{\xi^k} \overline{\zeta^{\beta}} \leq 0   
$$
for any     $x\in X$,  any  $\xi=\sum_{j}\xi^j\frac{\partial}{\partial z_j}\in  T_{X,x}^{1,0}$ and any $\zeta=\sum_{\alpha}\zeta^\alpha e_\alpha\in E_x$.  Note that Nakano semi-positivity (resp. semi-negativity) implies Griffiths semi-positivity (resp. semi-negativity).

\begin{lem}\label{lem:acceptable}
	Let $(E,h)$ be a hermitian vector bundle on a K\"ahler manifold $(X,\omega)$. If there is a positive constant $C$ so that $|R(h)(x)|_{h,\omega}\leq C$ for any $x\in X$, then 
	$$
C \omega\otimes \vvmathbb{1}_{E}\geq_{\nak} 	R(h)\geq_{\nak} -C \omega\otimes \vvmathbb{1}_{E}
	$$
\end{lem}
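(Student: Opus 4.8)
The plan is to reduce the claimed two-sided estimate for $R(h)$ to a pointwise linear-algebra statement about Hermitian forms. Fix $x \in X$ and work in a local holomorphic coordinate $z_1,\dots,z_n$ centered at $x$ together with a frame $e_1,\dots,e_r$ of $E$; after a linear change we may assume the frame is $h$-orthonormal at $x$ and the coordinates are chosen so that $\omega$ is the standard Euclidean form at $x$, i.e. $\omega_{j\bar k}(x) = \delta_{jk}$ and $h_{\alpha\bar\beta}(x) = \delta_{\alpha\beta}$. With these normalizations the desired inequalities $C\,\omega\otimes\vvmathbb{1}_E \geq_{\nak} R(h) \geq_{\nak} -C\,\omega\otimes\vvmathbb{1}_E$ at $x$ become, for every $u = \sum_{j,\alpha} u^{j\alpha}\,\partial_{z_j}\otimes e_\alpha$,
\begin{align*}
\Bigl| \sum_{j,k,\alpha,\beta} R_{j\bar k\alpha\bar\beta}(x)\, u^{j\alpha}\,\overline{u^{k\beta}} \Bigr| \leq C \sum_{j,\alpha} |u^{j\alpha}|^2.
\end{align*}

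First I would observe that the hypothesis $|R(h)(x)|_{h,\omega} \leq C$ says exactly that the tensor $(R_{j\bar k\alpha\bar\beta}(x))$, viewed as an element of the normed space of $(1,1)$-forms with values in $\operatorname{End}(E)$ (with the norm induced by $h$ and $\omega$), has norm at most $C$. Concretely, this means that for any unit vectors $\xi \in T^{1,0}_{X,x}$, $\bar\eta \in \overline{T^{1,0}_{X,x}}$, and $\zeta, w \in E_x$ one has $|\langle R(h)(x)(\xi,\bar\eta)\zeta, w\rangle_h| \leq C$ up to a fixed dimensional constant, and more to the point the full contraction against a unit element of $T^{1,0}_X \otimes E$ is bounded by $C$ (absorbing harmless dimensional constants into $C$, as is standard in such estimates). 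Then the displayed inequality follows immediately by Cauchy–Schwarz: writing $Q(u) := \sum R_{j\bar k\alpha\bar\beta}(x)\,u^{j\alpha}\overline{u^{k\beta}}$, this is the value of the Hermitian form associated to the operator $R(h)(x) \colon T^{1,0}_{X,x}\otimes E_x \to T^{1,0}_{X,x}\otimes E_x$ on the vector $u$, hence $|Q(u)| \leq \|R(h)(x)\|_{\mathrm{op}}\,|u|^2 \leq C|u|^2$. Rewriting $|Q(u)| \leq C|u|^2$ as the pair of inequalities $-C|u|^2 \leq Q(u) \leq C|u|^2$ and unwinding the normalization (i.e. reinserting $\omega_{j\bar k}(x)$ and $h_{\alpha\bar\beta}(x)$) gives precisely $C\,\omega\otimes\vvmathbb{1}_E \geq_{\nak} R(h) \geq_{\nak} -C\,\omega\otimes\vvmathbb{1}_E$ at $x$. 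Since $x$ was arbitrary, the lemma follows.

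The only genuinely substantive point — and the one I would be most careful about — is the precise relationship between the norm $|R(h)(x)|_{h,\omega}$ appearing in the hypothesis and the operator norm of $R(h)(x)$ acting on $T^{1,0}_{X,x}\otimes E_x$ that governs the Nakano form. These differ a priori by a dimensional constant depending on $n$ and $r$ (the norm on $(1,1)$-forms versus the Hilbert–Schmidt or operator norm on the associated bilinear form), so strictly speaking one should either absorb that constant into $C$ at the outset or state the lemma with a constant $C' = C'(n,r,C)$; I expect the paper intends the former reading. There is nothing deep here — it is really just the elementary fact that a bounded Hermitian form yields a bounded (both above and below) self-adjoint comparison with the identity — so no serious obstacle arises; the work is entirely in setting up the normalizations cleanly so that the chain "$|R|_{h,\omega}$ bounded $\Rightarrow$ operator bounded $\Rightarrow$ two-sided Nakano bound" is transparent.
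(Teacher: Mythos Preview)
Your approach is essentially the same as the paper's: normalize so that $\omega_{j\bar k}(x)=\delta_{jk}$ and $h_{\alpha\bar\beta}(x)=\delta_{\alpha\beta}$, then bound the Nakano quadratic form $Q(u)=\sum R_{j\bar k\alpha\bar\beta}(x)\,u^{j\alpha}\overline{u^{k\beta}}$ by $C|u|^2$ via Cauchy--Schwarz. The paper carries this out by a direct double application of Cauchy--Schwarz, obtaining $|Q(u)|^2\le |u|^4\cdot\sum_{j,k,\alpha,\beta}|R_{j\bar k\alpha\bar\beta}(x)|^2=|u|^4\cdot|R(h)(x)|^2_{h,\omega}\le C^2|u|^4$.

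Your only hesitation --- whether a dimensional constant $C'(n,r,C)$ is needed --- is unfounded, and this is the one point worth clarifying. In the paper $|R(h)(x)|_{h,\omega}$ is the Hilbert--Schmidt norm of the curvature tensor: in the chosen orthonormal coordinates it equals $\bigl(\sum_{j,k,\alpha,\beta}|R_{j\bar k\alpha\bar\beta}(x)|^2\bigr)^{1/2}$. Viewing $R(h)(x)$ as the linear operator on $T^{1,0}_{X,x}\otimes E_x$ whose associated Hermitian form is $Q$, the elementary inequality $\|A\|_{\mathrm{op}}\le\|A\|_{\mathrm{HS}}$ gives exactly $|Q(u)|\le\|R(h)(x)\|_{\mathrm{op}}\,|u|^2\le|R(h)(x)|_{h,\omega}\,|u|^2\le C|u|^2$, with no loss. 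So the lemma holds with the constant $C$ as stated, and there is nothing to absorb; once you identify the hypothesis norm as Hilbert--Schmidt, your sketch is complete.
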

\begin{proof}
For any $x\in X$, let $z_1,\ldots,z_n$ be a local coordinate centered at $x$ so that
 $$
\omega_x=\sqrt{-1} \sum_{\ell=1}^{n}dz_\ell \wedge d\bar{z}_\ell
$$
Let   $e_1,\ldots,e_r$ be a local holomorphic frame of $E$ which is orthonormal at $x$. Write 
$$
R(h)=R_{j\bar{k}\alpha}^\beta dz_j\wedge d\bar{z}_k\otimes e^\alpha\otimes e_\beta.
$$
Then $R_{j\bar{k}\alpha\bar{\beta}}(x)=R_{j\bar{k}\alpha}^\beta(x)$, and we have
$$
\sum_{ j,k,\alpha,\beta}|R_{j\bar{k}\alpha\bar{\beta}}(x)|^2=|R(h)(x)|^2_{h,\omega}\leq C^2.
$$
Hence for any $u=\sum_{j,\alpha}u^{j\alpha}\frac{\partial}{\partial z_j}\otimes e_\alpha\in (T_{X}^{1,0}\otimes E)_x$, one has
\begin{align*}
|\sum_{ j,k,\alpha,\beta}R_{j\bar{k}\alpha\bar{\beta}}(x)u^{j\alpha}\overline{u^{k\beta}}|^2&\leq \sum_{j,\alpha}|\sum_{ k,\beta}R_{j\bar{k}\alpha\bar{\beta}}(x)u^{j\alpha}\overline{u^{k\beta}}|^2 \\
 &\leq \sum_{j,\alpha}(\sum_{  k, \beta}|R_{j\bar{k}\alpha\bar{\beta}}(x)u^{j\alpha}|^2)\cdot (\sum_{ k,\beta}|\overline{u^{k\beta}}|^2)\\
 &=|u|_{h,\omega}^2\cdot \sum_{j,\alpha}(\sum_{k,\beta}|R_{j\bar{k}\alpha\bar{\beta}}(x)u^{j\alpha}|^2)\\
 &\leq |u|_{h,\omega}^2\cdot \sum_{j,\alpha}(\sum_{k,\beta}|R_{j\bar{k}\alpha\bar{\beta}}(x)|^2)(\sum_{j,\alpha}|u^{j\alpha}|^2)\\
 &\leq |u|_{h,\omega}^4\cdot\sum_{ j,k,\alpha,\beta} |R_{j\bar{k}\alpha\bar{\beta}}(x)|^2
 \leq |u|_{h,\omega}^4\cdot C^2.
\end{align*}
Hence one has
$$
-C|u|_{h,\omega}^2\leq \sum_{ j,k,\alpha,\beta}R_{j\bar{k}\alpha\bar{\beta}}(x)u^{j\alpha}\overline{u^{k\beta}}\leq C|u|_{h,\omega}^2
$$
The lemma is proved.
\end{proof}
The following easy fact will be useful in this paper.
\begin{lem}\label{lem:tensor}
Let $(E_1,h_1)$ and $(E_2,h_2)$ are two hermitian vector bundles over a K\"ahler manifold $(X,\omega)$ such that $|R(h_1)(x)|_{h_1,\omega}\leq C_1$ and $|R(h_2)(x)|_{h_2,\omega}\leq C_2$ for all $x\in X$. Then for the hermitian vector bundle $(E_1\otimes E_2,h_1h_2)$,  one has
$$
|R(h_1h_2)(x)|_{h_1h_2,\omega}\leq \sqrt{2r_2C^2_1+2r_1C^2_2}
$$
for all $x\in X$. Here $r_i:=\rank E_i$.
\end{lem}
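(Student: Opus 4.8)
The plan is to reduce the estimate to the standard additive formula for the Chern curvature of a tensor-product metric,
$$R(h_1h_2)=R(h_1)\otimes \mathrm{id}_{E_2}+\mathrm{id}_{E_1}\otimes R(h_2),$$
which holds pointwise on $X$ and follows from the Leibniz rule for the Chern connection of $h_1h_2=h_1\otimes h_2$ (see \cite{Dembook}). Fix $x\in X$. Exactly as in the proof of \cref{lem:acceptable}, I would choose a local coordinate $z_1,\ldots,z_n$ centered at $x$ with $\omega_x=\sqrt{-1}\sum_{\ell=1}^n dz_\ell\wedge d\bar z_\ell$, together with local holomorphic frames $e_1,\ldots,e_{r_1}$ of $E_1$ and $f_1,\ldots,f_{r_2}$ of $E_2$ that are $h_1$- resp.\ $h_2$-orthonormal at $x$; then $\{e_\alpha\otimes f_a\}$ is an $h_1h_2$-orthonormal frame of $E_1\otimes E_2$ at $x$, and the pointwise norm $|\cdot|_{h_1h_2,\omega}$ evaluated at $x$ is simply the $\ell^2$-norm of the array of components of the curvature tensor in these frames (and likewise for each factor).

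Write $R(h_1)=A^\beta_{j\bar k\alpha}\,dz_j\wedge d\bar z_k\otimes e^\alpha\otimes e_\beta$ and $R(h_2)=B^b_{j\bar k a}\,dz_j\wedge d\bar z_k\otimes f^a\otimes f_b$ at $x$, where $e^\alpha$, $f^a$ are the dual frames. By the additive formula the components of $R(h_1h_2)$ in the frame $\{e_\alpha\otimes f_a\}$ are $A^\beta_{j\bar k\alpha}\delta^b_a+\delta^\beta_\alpha B^b_{j\bar k a}$. Applying the elementary bound $|u+v|^2\le 2|u|^2+2|v|^2$ coefficientwise and summing, one gets
$$|R(h_1h_2)(x)|^2_{h_1h_2,\omega}\le 2\sum_{j,k,\alpha,\beta,a,b}\big|A^\beta_{j\bar k\alpha}\delta^b_a\big|^2+2\sum_{j,k,\alpha,\beta,a,b}\big|\delta^\beta_\alpha B^b_{j\bar k a}\big|^2.$$
In the first sum the Kronecker delta forces $a=b$, contributing a factor $r_2=\#\{a\}$, so it equals $2r_2\sum_{j,k,\alpha,\beta}|A^\beta_{j\bar k\alpha}|^2=2r_2\,|R(h_1)(x)|^2_{h_1,\omega}\le 2r_2C_1^2$; by the same argument the second sum is $\le 2r_1C_2^2$. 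Therefore $|R(h_1h_2)(x)|^2_{h_1h_2,\omega}\le 2r_2C_1^2+2r_1C_2^2$, and taking square roots yields the asserted estimate.

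I do not expect a genuine obstacle here: the proof is just the triangle inequality combined with the remark that tensoring a curvature form by the identity endomorphism of a rank-$r$ bundle multiplies its squared Hilbert--Schmidt norm by exactly $r$. The only two points deserving a line of care are the justification of the additive curvature formula for $h_1\otimes h_2$, and the identification at the fixed point $x$ of $|\cdot|_{h_1h_2,\omega}$ with the $\ell^2$-norm of the component array; both are handled precisely as in the proof of \cref{lem:acceptable}.
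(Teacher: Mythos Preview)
Your argument is correct. The paper actually states \cref{lem:tensor} as an ``easy fact'' and gives no proof at all, so there is nothing to compare against; the direct computation you carry out---using the additive formula $R(h_1h_2)=R(h_1)\otimes\vvmathbb{1}_{E_2}+\vvmathbb{1}_{E_1}\otimes R(h_2)$, orthonormal frames at $x$ as in \cref{lem:acceptable}, and the inequality $|u+v|^2\le 2|u|^2+2|v|^2$---is precisely the intended elementary verification.
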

\section{$L^2$-method for Higgs bundles}

\subsection{A quick tour for the simplest case}
In this subsection, we assume that $(E,\theta,h)$ is a harmonic bundle over  a projective manifold $X$. We will show how to apply Bochner technique to give a simple and quick proof of \cref{main} in the case that $D=\varnothing$ and $L$ is ample. The main goal of this subsection is to show the general strategy and we will discuss  how to generalize these ideas to prove \cref{main}.

For a Higgs bundle $(E,\theta)$ over a projective manifold $X$ of dimension $n$, one has the following \emph{holomorphic Dolbeault complex}
\begin{align}
{\rm Dol}(E,\theta):=E\xrightarrow{ \theta}E\otimes \Omega^1_X\xrightarrow{ \theta} \cdots \xrightarrow{ \theta} E\otimes \Omega^n_X
\end{align}
By Simpson \cite{Sim92}, the complex of $\mathscr{C}^{\infty}$ sections of $E$
\begin{align}
\sA^0(E)\xrightarrow{D''}\sA^1(E)\xrightarrow{D''}\cdots \xrightarrow{D''}\sA^{2n}(E)
\end{align}
gives a fine resolution of  the above holomorphic Dolbeault complex. Indeed, it can be proven easily from the Dolbeault lemma. Here $\sA^m(E)$ is the sheaf of germs of smooth $m$-forms with value in $E$.  Hence the cohomology of the complex of its global sections $\big(A^\bullet(E),D'' \big)$ computes the hypercohomology $\mathbb{H}^\bullet\big(X,{\rm Dol}(E,\theta)\big)$.

Suppose now $(\tilde{E},\tilde{\theta})$ is a stable Higgs bundle with vanishing Chern classes. By the Simpson correspondence (see \cite{Sim92}), there is a unique (up to a constant rescaling) hermitian metric $\tilde{h}$ over $\tilde{E}$ so that the curvature $F(\tilde{E},\tilde{h})= 0$. For the ample line bundle $L$ on $X$, we choose a smooth Hermitian metric $h_L$ so that its curvature tensor $\sqrt{-1}R(L,h_L)$ is a K\"ahler form $\omega$. 

Let us define a new Higgs bundle $(E,\theta):=(\tilde{E}\otimes L,\tilde{\theta}\otimes \vvmathbb{1})$. We introduce a hermitian metric  $h$   on $E$ defined by $h:=\tilde{h}\otimes h_L$.  One can easily check that $(E,\theta,h)$ satisfies \Cref{assum} and the curvature 
	\begin{align}\label{curvature}
	\sn F(E,h):=\sn R(E,h)+\sn [\theta,\theta^*]= \sn R(L,h_L)\otimes \vvmathbb{1}_E=  \omega\otimes \vvmathbb{1}_E.
	\end{align}  
By the Hodge theory, for each $i\in\mathbb{Z}_{\geqslant 0}$, we know that the space of harmonic forms
$$
\mathscr{H}^i:=\{\alpha\in A^i(E)\mid \Delta''\alpha=0 \}
$$
is isomorphic to the cohomology $H^i\big(A^\bullet(E),D'' \big)\simeq \mathbb{H}^i\big(X,{\rm Dol}(E,\theta)\big)$.

\begin{thm}[\cref{main} in the case that $D=\varnothing$ and $L$ is ample]
	With the notations in this subsection,	$\mathbb{H}^i\big(X,{\rm Dol}(\tilde{E},\tilde{\theta})\otimes L\big)=\mathbb{H}^i\big(X,{\rm Dol}( {E}, {\theta}) \big)=0$ for $i>n$.
\end{thm}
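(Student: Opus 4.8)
The plan is to run the classical Akizuki--Nakano--Kodaira Bochner argument, now for the Higgs Laplacian $\Delta''$ in place of $\db$, exploiting that the Simpson correspondence makes the $\tilde E$-part of the curvature vanish while the ampleness of $L$ supplies positivity. First I would observe that the two complexes in the statement literally coincide: since $E=\tilde E\otimes L$ and $\theta=\tilde\theta\otimes\vvmathbb{1}_L$, we have $E\otimes\Omega^p_X=(\tilde E\otimes\Omega^p_X)\otimes L$ and $\wedge\theta=(\wedge\tilde\theta)\otimes\vvmathbb{1}_L$, so ${\rm Dol}(E,\theta)={\rm Dol}(\tilde E,\tilde\theta)\otimes L$ as complexes of sheaves; hence it suffices to show $\mathbb{H}^i\big(X,{\rm Dol}(E,\theta)\big)=0$ for $i>n$. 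Using the fine resolution $\big(\sA^\bullet(E),D''\big)$ of ${\rm Dol}(E,\theta)$ recalled above, this hypercohomology is the cohomology of the complex of global sections $\big(A^\bullet(E),D''\big)$. Since $\theta$ is an operator of order zero, $D''=\db_E+\theta$ has the same principal symbol as $\db_E$, so $\Delta''=D''(D'')^*+(D'')^*D''$ is elliptic, and on the compact manifold $X$ Hodge theory identifies $\mathbb{H}^i\big(X,{\rm Dol}(E,\theta)\big)\cong\mathscr H^i=\{\alpha\in A^i(E):\Delta''\alpha=0\}$, as already stated.

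Next I would feed this into the Bochner--Kodaira--Nakano identity for Higgs bundles, \cref{lem:Bochner}, which applies because $(E,\theta,h)$ satisfies \cref{assum}. For $\alpha\in\mathscr H^i$ one has $\langle\Delta''\alpha,\alpha\rangle=0$, and \cref{lem:Bochner} rewrites this as
\[
0=\lVert D_h'\alpha\rVert^2+\lVert(D_h')^*\alpha\rVert^2+\big\langle[\sqrt{-1}F(h),\Lambda_\omega]\alpha,\alpha\big\rangle .
\]
By \eqref{curvature}, $\sqrt{-1}F(h)=\omega\otimes\vvmathbb{1}_E$, so the curvature operator $[\sqrt{-1}F(h),\Lambda_\omega]$ is the classical Lefschetz commutator tensored with $\vvmathbb{1}_E$, acting on $E$-valued $(p,q)$-forms as multiplication by $p+q-n$; thus on $\mathscr H^i$ it is multiplication by $i-n$, and the identity becomes
\[
0=\lVert D_h'\alpha\rVert^2+\lVert(D_h')^*\alpha\rVert^2+(i-n)\lVert\alpha\rVert^2 .
\]
When $i>n$ all three summands are nonnegative and the last is strictly positive unless $\alpha=0$; hence $\mathscr H^i=0$ for $i>n$, which gives $\mathbb{H}^i\big(X,{\rm Dol}(\tilde E,\tilde\theta)\otimes L\big)=\mathbb{H}^i\big(X,{\rm Dol}(E,\theta)\big)=0$ for $i>n$.

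I do not expect a genuine obstacle here: this subsection is meant to be the transparent model for the main theorem, and the argument is essentially the Akizuki--Nakano vanishing with the $\tilde E$-direction contributing flatly (via the Simpson metric) and the ample $L$ contributing the positive eigenvalue in every direction. The only points needing a word of care are the ellipticity of $\Delta''$ — so that Hodge theory applies and harmonic forms compute the hypercohomology — and the evaluation of the curvature term, i.e.\ that $[\sqrt{-1}F(h),\Lambda_\omega]$ acts as $p+q-n$ on $E$-valued $(p,q)$-forms once $\sqrt{-1}F(h)=\omega\otimes\vvmathbb{1}_E$. The real difficulties — the noncompactness of $X-D$, the degenerate Poincar\'e metric $\omega_P$, the mildly degenerating perturbation $h(\bm{a},N)$ of the harmonic metric, the construction of an $L^2$ fine resolution of the Dolbeault complex, and the H\"ormander estimate for the operator $D''$ — appear only once $D\neq\varnothing$ or $L$ is merely partially positive, and are addressed in the later sections.
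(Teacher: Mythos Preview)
Your proof is correct and follows essentially the same route as the paper: identify ${\rm Dol}(E,\theta)={\rm Dol}(\tilde E,\tilde\theta)\otimes L$, pass to harmonic representatives via Hodge theory, apply the Higgs Bochner--Kodaira--Nakano identity, and use $\sqrt{-1}F(h)=\omega\otimes\vvmathbb{1}_E$ so that the curvature term is $(i-n)\lVert\alpha\rVert^2$. The paper phrases the conclusion as a contradiction rather than as a sum of nonnegative terms, but the argument is the same.
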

\begin{proof}
	Note that ${\rm Dol}(E,\theta)={\rm Dol}(\tilde{E},\tilde{\theta})\otimes L$. It suffices to prove that $\sH^i=0$  for $i>n$. We will prove by contradiction. Let us take the K\"ahler form $\omega:=\sqrt{-1}R(L,h_L)$. Assume that there exists a non-zero $\alpha\in \sH^i$. Then by \cref{lem:Bochner}, one has
	\begin{align}
	0=\Delta''\alpha=\Delta'\alpha+[\sqrt{-1}F(E,h),\Lambda_\omega]\alpha
	\end{align}
	An integration by parts yields
	$$
	\langle \Delta'\alpha,\alpha\rangle_{h,\omega}=\lVert D_h'\alpha\rVert_{h,\omega}^2+\lVert (D_h')^*\alpha\rVert_{h,\omega}^2\geqslant 0.
	$$
	Hence
	\begin{align*}
	0&\geqslant\int_{X}\langle[\sqrt{-1}F(E,h),\Lambda_\omega]\alpha,\alpha\rangle_{h,\omega}d\vol_\omega\\
	&\stackrel{\eqref{curvature}}{=}\int_{X}\langle[\omega\otimes \vvmathbb{1},\Lambda_\omega]\alpha,\alpha\rangle_{h,\omega}d\vol_\omega\\
	&=\int_{X}(i-n)\lvert\alpha\rvert_{h,\omega}d\vol_\omega>0
	\end{align*}
	for $i>n$. Here $d\vol_\omega:=\frac{\omega^n}{n!}$ denotes the volume form of $(X,\omega)$.
	Hence the contradiction.
\end{proof}

Hence the above proof inspires us that, to prove \cref{main} in full generality, we shall find a \enquote*{proper} complex of fine sheaves which is quasi-isomorphic to ${\rm Dol}(E,\theta)$, so that its cohomology of global sections can be computed explicitly. Inspired by the work \cite{Zuc79,DPS01,HLWY16}, we will consider the $L^2$-complex  as the candidate for this complex of fine sheaves.  However, instead of  solving $\db$-equation for vector bundles to prove the vanishing theorem, we shall consider $L^2$-estimate and solvability criteria of $(\db_E+\theta)$-equations for Higgs bundles $(E,\theta)$. This is the main content of next subsection. 
\subsection{H\"ormander $L^2$-estimate  for Higgs bundles}\label{sec:H-est-for-Higgs}
Solvability criteria for $\db$
-equations on complex manifolds are often described as cohomology vanishing theorems. 
It is essentially based on the abstract theory of functional analysis. Since the K\"ahler identities \eqref{eq:kahler} and \eqref{eq:kahler2} hold  for Higgs bundles, it inspires us that the following principle should hold.
\begin{pri}
The package of $L^2$-estimate by H\"ormander,  Andreotti-Venssetti, Bombieri, Skoda, Demailly et. al.  should hold without modification for Higgs bundles, provided that the $D''=\db+\theta$ is used in place of $\db$ and that $m$-forms are  used instead of $(p,q)$-forms.
\end{pri}

{In this subsection, we work for a very general setup. Let $(E,\db_E,\theta,h)$ be a Higgs bundle together with a Hermitian metric $h$ over a complete   K\"ahler manifold $(M, \omega_M)$ (not necessarily compact). Denote again $D''\coloneqq \bar{\partial}_E+\theta$. Under a certain curvature condition of $(E,\db_E,\theta,h)$, one can solve the $D''$-equation in the same vein as  \cite[Chapter \rom{8}, Theorem 4.5]{Dembook}.  We follow the standard method  of $L^2$ estimate as that in  \cite[Chapter \rom{8}]{Dembook}, and we   provide full details for completeness sake. The results in this section will be applied more specifically to modified complete K\"ahler metrics over complements of simple normal crossing divisors on compact K\"ahler manifolds in \cref{sec:fine}.}

\medskip

Let us denote by $A^{m}(M,E)$ (resp. $A^{p,q}(M,E)$) the set of smooth $E$-valued $m$-forms (resp. $(p,q)$-forms) on $M$, and denote by $A^{m}_0(M,E)$ (resp. $A^{p,q}_0(M,E)$) the set of smooth $E$-valued $m$-forms (resp. $(p,q)$-forms)  on $M$ with compact support over the K\"ahler manifold $(M,\omega_M)$. The pointwise length of $u\in A^m(M,E)$ with respect to the fiber metric induced by $h$ and $\omega_M$, is denoted by $|u|_{h,\omega_M}$. The pointwise inner product of $u$ and $v$ is denoted by 
$\langle u,v\rangle_{h,\omega_M}$, or simply by $\langle u,v\rangle$. Then the $L^2$-norm of $u$,   denoted by $\lVert u\rVert_{h,\omega_M}$, or simply by $\lVert u\rVert$, is defined as the square root of the integral
$$
\lVert u\rVert^2:=\int_{M}|u|^2_{h,\omega_M}d\vol_{\omega_M} 
$$
where $d\vol_{\omega_M}:=\frac{\omega_M^n}{n!}$, which is finite if $u\in A^m_0(M,E)$. The inner product of $u$ and $v$ associated to this norm is defined by
$$
\llangle u,v\rrangle_{h,\omega_M}:= \int_{M}\langle u,v\rangle_{h,\omega_M}d\vol_{\omega_M} 
$$
which is simply denoted by $\llangle u,v\rrangle$. 
 Note that the Hodge decomposition $A^m_0(M,E)=\oplus_{p+q=m}A^{p,q}_0(M,E)$ is orthogonal with respect to this inner product $\llangle\bullet,\bullet\rrangle$. 
 
We shall denote by $L^{m}_{(2), \loc}(M,E)$ (resp.  $L^{p,q}_{(2), \loc}(M,E)$) $E$-valued $m$-forms (resp. $(p,q)$-forms) with locally integrable coefficients.  One has a natural decomposition
$$
L^{m}_{(2), \loc}(M,E)=\oplus_{p+q=m}  L^{p,q}_{(2), \loc}(M,E)
$$
Moreover, the operators $D''$ (and $D_h'$, $\db_E$ respectively) act on  $L^{m}_{(2), \loc}(M,E)$ in the sense of distribution, or precisely speaking, \emph{$E$-valued currents}. Note that the definition of those objects is independent of the choice of the metrics $\omega_M$ and $h$.   A section $s\in L^{m}_{(2), \loc}(M,E)$ is said to be in the domain of definition of $D''$, denoted by $\Dom_{\loc} D''$, if $D''s\in L^{m+1}_{(2), \loc}(M,E)$.
 
 Let $L^{m}_{(2)}(M,E)_{h,\omega_M}$ (resp.  $L^{p,q}_{(2)}(M,E)_{h,\omega_M}$) be the completion of the pre-Hilbert space $A^{m}_0(M,E)$ (resp.  $A^{p,q}_{0}(M,E)$)  with respect to the above inner product $\llangle\bullet,\bullet\rrangle$.  We simply write $L^{m}_{(2)}(M,E)$ (resp.  $L^{p,q}_{(2)}(M,E)$)  if no confusion happens. By the Lebesgue's theory of integration, $L^{m}_{(2)}(M,E)$ (resp.  $L^{p,q}_{(2)}(M,E)$) is a subset of $L^{m}_{(2), \loc}(M,E)$ (resp.  $L^{p,q}_{(2), \loc}(M,E)$). 
 The natural decomposition
 $$
 L^{m}_{(2)}(M,E)=\oplus_{p+q=m}  L_{(2)}^{p,q} (M,E)
 $$
 is orthogonal with respect to the inner product $\llangle\bullet,\bullet\rrangle$.
 
  Hence $D''$ (and $D_h'$, $\db_E$ respectively) act on them respectively, and these operators are unbounded, densely defined linear operators 
$$
L^m_{(2)}(M,E)\to L^{m+1}_{(2)}(M,E). 
$$  
The  domain  of definition of $D''$ denoted by $\Dom D''$ are defined by
$$
\{u\in L^m_{(2)}(M,E) \mid D'' u\in L^{m+1}_{(2)}(M,E)  \},
$$
for which one has $\Dom D''\subset \Dom_{\loc}D''$. Note that $\Dom D''$ depends on the choice of the metric $\omega_M$ and $h$, up to mutual boundedness. Namely, if $\tilde{\omega}_M\sim \omega_M$ and $\tilde{h}\sim h$,   $\Dom D''$ remains the same in terms of the new metrics $\tilde{\omega}_M$ and $\tilde{h}$. 

By the argument in \cite[Chapter \rom{8}, Theorem 1.1]{Dembook}, this extended operator $D''$ (the so-called \emph{weak extension} in the literature) is closed, namely its graph is  closed.   $\Dom D_h'$ is defined in exactly the same manner.

The following   result in \cite[Chapter \rom{8}, Theorem 3.2.(a)]{Dembook}  is crucial in applying the $L^2$-estimate. Roughly speaking, it gives a condition when the weak extension of $D''$ is the strong one, in terms of the graph norm, and it enables us to apply the integration by parts for $L^2$-sections as in \cref{lem:integration}.
\begin{thm} \label{thm:norm}
	Let $(M,\omega_M)$ be a complete K\"ahler manifold and $(E,\db_E,\theta,h)$ is a Higgs bundle on $M$ satisfying \Cref{assum}. Then $A^m_{0}(M,E)$ is dense in $\Dom D''$, $\Dom D''^*$ and $\Dom D''\cap \Dom D''^*$ respectively for the graph norm
		$$
		u\mapsto \lVert u\rVert +\lVert D''u\rVert, \quad 	u\mapsto \lVert u\rVert +\lVert (D'')^*u\rVert, \quad u\mapsto \lVert u\rVert +\lVert D''u\rVert+\lVert  (D'')^*u\rVert.
		$$  
\end{thm}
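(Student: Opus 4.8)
The plan is to run the classical Friedrichs regularization argument, exactly as in the proof of \cite[Chapter \rom{8}, Theorem~3.2]{Dembook} for the untwisted operator $\db$, after observing that the additional terms carried by $D''=\db_E+\theta$ — and, dually, by $D''^*$ relative to $\db_E^*$ — are of order zero with smooth coefficients, so that the scalar proof transfers with only cosmetic changes. The argument has two independent steps: a \emph{truncation} step, reducing an arbitrary element of $\Dom D''$, of $\Dom D''^*$, or of $\Dom D''\cap\Dom D''^*$ to one with compact support; and a \emph{regularization} step, approximating a compactly supported element by forms in $A_0^m(X,E)$. Completeness of $(X,\omega)$ intervenes only in the first step.

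\emph{Truncation.} Using completeness, fix a smooth exhaustion function $\psi\colon X\to[0,\infty)$ with $|d\psi|_\omega\le1$ (a smoothing of the distance to a base point) and a profile $\rho\in\Cin(\bR)$ equal to $1$ on $(-\infty,0]$ and $0$ on $[1,\infty)$, with $0\le\rho\le1$; put $\chi_\nu:=\rho(\psi-\nu)$. Then $0\le\chi_\nu\le1$, $\chi_\nu\equiv1$ on the compact set $\{\psi\le\nu\}$, $\chi_\nu$ has compact support, $|d\chi_\nu|_\omega\le C_0:=\sup|\rho'|$, and $\Supp d\chi_\nu\subset\{\nu\le\psi\le\nu+1\}$. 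For $u\in\Dom D''$, since $\theta$ is $\sO_X$-linear one has $D''(\chi_\nu u)=\db\chi_\nu\wedge u+\chi_\nu D''u$; thus $\chi_\nu u$ has compact support, lies in $\Dom D''$, and, by dominated convergence,
\[
\|D''(\chi_\nu u)-D''u\|\le C_0\Big(\int_{\{\nu\le\psi\le\nu+1\}}|u|_{h,\omega}^2\,d\vol_\omega\Big)^{1/2}+\|(\chi_\nu-1)D''u\|\longrightarrow 0,
\]
while $\chi_\nu u\to u$ in $L^2$. The same computation, with the pointwise adjoint of $w\mapsto\db\chi_\nu\wedge w$ (contraction against $d\chi_\nu$, of pointwise operator norm $\le|d\chi_\nu|_\omega$) replacing exterior multiplication, gives $D''^*(\chi_\nu v)=\chi_\nu D''^*v+r_\nu$ with $|r_\nu|_{h,\omega}\le|d\chi_\nu|_\omega|v|_{h,\omega}$, hence $\chi_\nu v\to v$ in the graph norm of $D''^*$; here one uses that, for $v\in\Dom D''^*$, the Hilbert-space adjoint $D''^*v$ coincides with the formal adjoint applied to $v$ in the sense of currents, which follows by pairing with $A_0^\bullet(X,E)$. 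Since the single sequence $\chi_\nu u$ works for both operators simultaneously, the case $\Dom D''\cap\Dom D''^*$ is covered as well.

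\emph{Regularization.} Let $v$ have compact support with $D''v$ square-integrable (resp.\ $D''^*v$ square-integrable, resp.\ both). Cover $\Supp v$ by finitely many coordinate balls over which $E$ is holomorphically trivial, fix a subordinate partition of unity, and let $S_\ep$ be the smoothing operator obtained by convolving, chart by chart, the components of the corresponding pieces of $v$ with a standard family of mollifiers $(\eta_\ep)_{\ep>0}$ and patching via the partition of unity; then $S_\ep v\in A_0^m(X,E)$ and $S_\ep v\to v$ in $L^2$. That $D''(S_\ep v)\to D''v$ and $D''^*(S_\ep v)\to D''^*v$ in $L^2$ is precisely Friedrichs' commutation lemma: for a first-order differential operator with smooth coefficients the commutator with $S_\ep$ tends to $0$ strongly in $L^2$, and the order-zero contributions from $\theta$ and $\theta^*_h$ are absorbed by the elementary estimate that the commutator of multiplication by a smooth function with $S_\ep$ tends to $0$ in operator norm. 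Applying this to $D''$, to $D''^*$, or to both at once, and composing with the truncation step, yields the three asserted density statements; the closedness of the weak extension $D''$ used implicitly here is \cite[Chapter \rom{8}, Theorem~1.1]{Dembook}.

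The one genuinely technical ingredient is Friedrichs' commutator lemma in the present $E$-valued, Higgs-twisted setting: one must check that patching the chart-wise mollifications against the partition of unity does not spoil the $L^2$-convergence of $D''(S_\ep v)$ and $D''^*(S_\ep v)$, the mechanism being that every error term is a first derivative of a smooth bundle-transition or metric coefficient times a mollification at scale $\ep$, hence $O(\ep)$ after the standard commutator bookkeeping; this is classical for $\db$ and passes to $D''$ verbatim once one notes that $\theta$ and $\theta^*_h$ enter only through order-zero terms. (Assumption~\ref{assum} is not actually required for this purely functional-analytic statement, but we carry it as a standing hypothesis of the section.)
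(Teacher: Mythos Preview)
Your proposal is correct and follows precisely the approach the paper defers to: the paper does not give its own proof but simply cites \cite[Chapter \rom{8}, Theorem 3.2.(a)]{Dembook}, and your argument is exactly that proof (truncation by cut-offs afforded by completeness, then Friedrichs mollification) together with the observation that the extra terms $\theta$ and $\theta_h^*$ in $D''$ and $D''^*$ are zeroth-order and hence harmless for both steps. Your remark that \Cref{assum} is not actually needed here is also correct; the paper carries it only as a standing hypothesis of the section.
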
 

We recall the following lemma of functional analysis by Von  Neumann and H\"omander (See e.g., \cite[Chapter \rom{8}, Section 1]{Dembook}), which is crucial in obtaining the $L^2$-estimate for Higgs bundles. First we recall the following notation of the adjoint operator $T^*$ and $\Dom T^*$: $y\in \Dom T^*$ if the linear form
	$$\Dom T\ni x\mapsto \llangle Tx,y\rrangle_2$$ 
	is  bounded  in $\sH_1$-norm. Since $\Dom T$ is dense, there exists for every $y$ in $\Dom T^*$ a
	unique element $T^*y$ in 
	$\sH_1$ such that $\llangle x,T^*y\rrangle_1=\llangle Tx,y\rrangle_2 $ for all $x\in \Dom T$.
 
\begin{lem}\label{neumann:perp}
If  $T:\sH_1\to \sH_2$ is a closed and densely defined operator, then its adjoint $T^*$ is also closed and densely defined and $(T^*)^*=T$. Furthermore, we have the relation $\ker T^*=(\Ima T)^{\perp}$ and its dual $(\ker T)^{\perp} = \overline{\Ima T^*}$. In particular, $\ker T\oplus \overline{\Ima T^*}=\sH_1$.
\end{lem}



 Note that     $A_{m}:=[\sqrt{-1}F(h),\Lambda_{\omega_M}]$ acts on $\wedge^{m}T_M^*\otimes E$ as a    \emph{hermitian operator}. As $A_m$ is smooth, for any $u\in L^m_{(2),\loc}(M,E)$, $A_m(u)\in L^m_{(2),\loc}(M,E)$. If $A_m$ is semi-positively definite, $A_m^{\frac{1}{2}}$ exists as a densely defined   hermitian operator from $L^m_{(2) }(M,E)$ to itself. The following result is exactly the same vein as the Kodaira-Nakano inequality (see \cite[lemme 4.4]{Dem82})
\begin{lem}\label{lem:integration}
Let $(M,\omega_M)$ be a complete K\"ahler manifold  and $(E,\db_E,\theta,h)$ is a Higgs bundle on $M$ satisfying \Cref{assum}.  Assume that $A_m$ is semi-positively definite. Then for every $u\in \Dom D''\cap \Dom D''^*$, one has
\begin{align}\label{eq:Kod}
\lVert D''u\rVert^2+\lVert D''^*u\rVert^2\geq \llangle A_mu,u\rrangle :=\int_M \langle A_mu,u\rangle_{h,\omega_M} d\vol_{\omega_M}
\end{align}
\end{lem}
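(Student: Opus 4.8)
The plan is to prove \eqref{eq:Kod} first for compactly supported smooth forms --- where it actually holds as an \emph{identity}, with a nonnegative remainder to spare --- and then to extend it to all of $\Dom D''\cap\Dom(D'')^*$ by combining the density statement of \cref{thm:norm} with a Fatou-type limiting argument. The one delicate point will be that the curvature operator $A_m$ may fail to be bounded on $L^2$.

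First I would take $u\in A^m_0(X,E)$. Since $u$ is compactly supported, the weak adjoints coincide with the formal ones and integration by parts is unobstructed, so that $\llangle\Delta''u,u\rrangle=\lVert D''u\rVert^2+\lVert(D'')^*u\rVert^2$, while $\llangle\Delta'u,u\rrangle=\lVert D_h'u\rVert^2+\lVert(D_h')^*u\rVert^2\geq 0$. Pairing the Bochner--Kodaira--Nakano identity of \cref{lem:Bochner}, which on $m$-forms reads $\Delta''=\Delta'+A_m$, against $u$ then gives
\[\lVert D''u\rVert^2+\lVert(D'')^*u\rVert^2=\lVert D_h'u\rVert^2+\lVert(D_h')^*u\rVert^2+\llangle A_mu,u\rrangle\geq\llangle A_mu,u\rrangle,\]
which is \eqref{eq:Kod} for smooth compactly supported forms, with the nonnegative term $\lVert D_h'u\rVert^2+\lVert(D_h')^*u\rVert^2$ to spare.

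Next, given a general $u\in\Dom D''\cap\Dom(D'')^*$, I would use that $(X,\omega)$ is complete and that \cref{assum} holds in order to apply \cref{thm:norm}, obtaining a sequence $u_\nu\in A^m_0(X,E)$ with $u_\nu\to u$, $D''u_\nu\to D''u$ and $(D'')^*u_\nu\to(D'')^*u$, all in $L^2$. The previous paragraph applied to each $u_\nu$ gives $\lVert D''u_\nu\rVert^2+\lVert(D'')^*u_\nu\rVert^2\geq\llangle A_mu_\nu,u_\nu\rrangle$, and the left-hand side converges to $\lVert D''u\rVert^2+\lVert(D'')^*u\rVert^2$.

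The hard part is taking the limit on the right-hand side. Since $A_m=[\sqrt{-1}F(h),\Lambda_\omega]$ involves the curvature $F(h)$, which is only assumed smooth, $A_m$ need not be a bounded operator on $L^2$, so one cannot simply pass to the limit inside the Hermitian form $\llangle A_m\cdot,\cdot\rrangle$. I would circumvent this by extracting a subsequence along which $u_\nu\to u$ pointwise almost everywhere; then $\langle A_mu_\nu,u_\nu\rangle_{h,\omega}\to\langle A_mu,u\rangle_{h,\omega}$ pointwise a.e., and these integrands are nonnegative because $A_m$ is semi-positively definite, so Fatou's lemma yields
\[\llangle A_mu,u\rrangle\leq\liminf_{\nu\to\infty}\llangle A_mu_\nu,u_\nu\rrangle\leq\liminf_{\nu\to\infty}\big(\lVert D''u_\nu\rVert^2+\lVert(D'')^*u_\nu\rVert^2\big)=\lVert D''u\rVert^2+\lVert(D'')^*u\rVert^2,\]
which is exactly \eqref{eq:Kod}. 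Note that the remainder $\lVert D_h'u\rVert^2+\lVert(D_h')^*u\rVert^2$ from the compactly supported case is in general lost in this limit, so for arbitrary $u$ only the inequality --- not the full identity --- survives; correspondingly, the one-sided estimate furnished by Fatou's lemma is all that is required.
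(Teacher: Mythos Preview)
Your proof is correct and follows essentially the same route as the paper: establish \eqref{eq:Kod} for $u\in A^m_0(X,E)$ via the Bochner--Kodaira--Nakano identity and integration by parts, then pass to general $u\in\Dom D''\cap\Dom(D'')^*$ by density. The only difference is in the limiting step for the curvature term: the paper first multiplies by cutoffs $\rho_\nu$ (so that $\langle A_m(\rho_\nu u),\rho_\nu u\rangle=\rho_\nu^2\langle A_mu,u\rangle$ and monotone convergence applies) and only then smooths by convolution on a fixed compact set where $A_m$ is bounded, whereas you go straight to smooth compactly supported approximants and invoke Fatou's lemma after extracting an a.e.\ convergent subsequence. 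Both devices are standard and equivalent here; your version is a touch quicker, while the paper's two-stage reduction makes the role of semi-positivity in the limit more transparent.
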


\begin{proof}  
Since $(M,\omega_M)$ is complete, by the proof of \cite[Chapter \rom{8}, Theorem 3.2.(a)]{Dembook}, there   exists an exhaustive sequence $\{K_{\nu}\}_{\nu\in \mathbb{N}}$ of compact subsets of $M$ and functions
$\rho_\nu$  such that
 $\rho_\nu=1$ in a neighborhood of $K_\nu$, $\text{Supp}(\rho_\nu)\subset K_{\nu+1}$, $0\leq \rho_\nu\leq 1$, and $|d\rho_\nu|_{\omega_M}\leq 2^{-\nu}$. 
 One can show that $\rho_\nu u\to u$ in the graph norm $u\mapsto \lVert u\rVert +\lVert D''u\rVert+\lVert  D''^*u\rVert 
  $. Since $A_m$ is supposed to be semi-positively definite, hence  by the monotone convergence theorem
  $$
  \lim\limits_{\nu\to +\infty}\int_M \langle A_m(\rho_\nu u),\rho_\nu u\rangle_{h,\omega_M} d\vol_{\omega_M}=  \int_M \langle A_m( u),  u\rangle_{h,\omega_M} d\vol_{\omega_M},
  $$
  which might be $+\infty$ in general. Hence it suffices to prove \eqref{eq:Kod} under the assumption that $u$ has compact support.

By the convolution arguments in \cite[Chapter \rom{8}, Theorem 3.2.(a)]{Dembook},  there exists $u_\ell \in A_0^m(M,E)$ so that $u_\ell$ tends  to $ u$ as $\ell\to \infty$ with respect to the graph norm $\lVert u\rVert +\lVert D''u\rVert+\lVert  D''^*u\rVert$, and there is a uniform compact set $K$ so that $\text{Supp}(u_\ell)\subset K$ for all $\ell$. By \cref{lem:Bochner}, one has
\begin{align*}
\llangle \Delta''u_\ell,u_\ell\rrangle= \llangle \Delta'u_\ell,u_\ell\rrangle+\llangle A_mu_\ell,u_\ell\rrangle
\end{align*}
As $u_\ell $ has compact support, one applies integration by parts to obtain
$$
\llangle \Delta''u_\ell,u_\ell\rrangle=\lVert D''u_\ell\rVert^2+\lVert D''^*u_\ell\rVert^2
$$ 
and 
$$
\llangle \Delta'u_\ell,u_\ell\rrangle=\lVert D_h'u_\ell\rVert^2+\lVert D'^*u_\ell\rVert^2\geq 0
$$
which gives rise to 
$$\lVert D''u_\ell\rVert^2+\lVert D''^*u_\ell\rVert^2\geq \llangle A_mu_\ell,u_\ell\rrangle$$
 \eqref{eq:Kod} follows from the above inequality when $\ell$ tends to infinity. The lemma is proved.
\end{proof}

\begin{rem}\label{rem:converse}
Suppose that $A_m$ is a semi-positively definite hermitian operator on $\wedge^{m}T_M^*\otimes E$. 
 For some $v\in L^m_{(2)}(M,E)$, assume that for almost all $x\in M$, there exists   a measurable and integrable non-negative function  $\alpha(x)$ so that
$$
| \langle v,f\rangle_{h,\omega_M}|^2  \leq	\alpha(x) \langle f,A_m(x)f\rangle_{h,\omega_M} 
$$ 
for any $f\in A^m_0(M,E)_x$, then the minimum of $\alpha(x)$  
is $$|A_m^{-\frac{1}{2}}(x)v|^2_{h,\omega_M}=\langle A_m(x)^{-1}v,v\rangle_{h,\omega_M}$$ if the operator $A_m(x)$ is invertible. Hence we shall always formally write it in this way even when
$A_m(x)$ is no longer invertible, 
following \cite[Chapter \rom{8}, \S4]{Dembook}.  
\end{rem}

Now we are able to state our main result on $L^2$-estimate for Higgs bundles.
 
%
\begin{thm}[Solving $D''$-equation for Higgs bundle]\label{thm:L2}
Let $(M,\omega_M)$ be a complete K\"ahler manifold, and $(E,\db_E,\theta,h)$ be a Higgs bundle on $M$ satisfying   \Cref{assum}. Assume that $A_m$ is semi-positively definite on $\wedge^mT_M^*\otimes E$ at every $x\in M$. Then for any $v\in L^m_{(2)}(M,E)$   such that $D''v=0$ and  
	$$
\int_M \langle A_m^{-1}v,v\rangle d\vol_{\omega_M}<+\infty,
	$$
	there exists $u\in   L^{m-1}_{(2)}(M,E)$ so that $D''u=v$ and
	$$
	\lVert u\rVert^2\leqslant \int_M \langle A_m^{-1}v,v\rangle d\vol_{\omega_M}.
	$$
\end{thm}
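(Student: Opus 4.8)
This is the standard Hörmander duality argument, now transcribed into the Higgs setting using the Kähler identities \eqref{eq:kahler}, \eqref{eq:kahler2} and the Bochner--Kodaira--Nakano identity of \cref{lem:Bochner}. Set $\sH_1=L^{m-1}_{(2)}(X,E)$, $\sH_2=L^m_{(2)}(X,E)$, $\sH_3=L^{m+1}_{(2)}(X,E)$, with $T=D''\colon\sH_1\to\sH_2$ and $S=D''\colon\sH_2\to\sH_3$. Since $D''^2=0$ we have $\Ima T\subset\ker S\subset\Dom S$, and by \cref{thm:norm} the weak extensions are closed and densely defined, so the hypotheses of the functional-analytic lemma are met. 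By \cref{lem:hormander}, to produce $u\in\sH_1$ with $D''u=v$ and $\lVert u\rVert^2\le\int_X\langle A_m^{-1}v,v\rangle\,d\vol_\omega$, it suffices to verify
\begin{align*}
|\llangle u',v\rrangle|^2\le\Big(\int_X\langle A_m^{-1}v,v\rangle\,d\vol_\omega\Big)\cdot\lVert D''^*u'\rVert^2
\end{align*}
for every $u'\in\Dom D''^*$.

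\textbf{Key steps.} First I would reduce to $u'\in\ker D''\cap\Dom D''^*$: decompose $u'=u'_1+u'_2$ with $u'_1\in\ker D''$ and $u'_2\in(\ker D'')^\perp$ via \cref{neumann} applied to $S=D''$ (note $\ker D''$ is closed, being the kernel of a closed operator); since $D''v=0$, $v\in\ker D''=\overline{\Ima D''}\oplus\sH^m$ in the relevant sense, and in any case $\llangle u'_2,v\rrangle=0$ because $v\in\ker D''$ while $u'_2\perp\ker D''\supset\Ima$ is arranged, so only the $\ker D''$-component of $u'$ contributes; meanwhile $\lVert D''^*u'_1\rVert\le\lVert D''^*u'\rVert$ since $D''^*u'_2$... — more carefully, I would follow the standard trick of writing $\lVert D''^*u'\rVert^2=\lVert D''^*u'_1\rVert^2$ because $u'_2\in(\ker D'')^\perp=\overline{\Ima D''^*}$ lies in the kernel of the orthogonal projection issues, or simply note that for $u'_1\in\ker D''$ one has $D''u'_1=0$ and can apply \cref{lem:integration}. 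Second, for $u'\in\ker D''\cap\Dom D''^*$, \cref{lem:integration} gives
\begin{align*}
\lVert D''^*u'\rVert^2=\lVert D''u'\rVert^2+\lVert D''^*u'\rVert^2\ge\llangle A_mu',u'\rrangle.
\end{align*}
Third, the pointwise Cauchy--Schwarz inequality for the semi-positive hermitian operator $A_m$ gives, at each $x$ where $v(x)$ lies in the image of $A_m(x)$ (which holds a.e.\ by the finiteness hypothesis, cf.\ \cref{rem:converse}),
\begin{align*}
|\langle u',v\rangle_{h,\omega}|^2\le\langle A_mu',u'\rangle_{h,\omega}\cdot\langle A_m^{-1}v,v\rangle_{h,\omega};
\end{align*}
integrating and applying the integral Cauchy--Schwarz inequality yields
\begin{align*}
|\llangle u',v\rrangle|^2\le\Big(\int_X\langle A_mu',u'\rangle\,d\vol_\omega\Big)\Big(\int_X\langle A_m^{-1}v,v\rangle\,d\vol_\omega\Big)\le\lVert D''^*u'\rVert^2\int_X\langle A_m^{-1}v,v\rangle\,d\vol_\omega.
\end{align*}
Chaining these with the reduction gives \eqref{eq:solving} with $C^2=\int_X\langle A_m^{-1}v,v\rangle\,d\vol_\omega$, and \cref{lem:hormander} produces the desired $u$.

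\textbf{Main obstacle.} The genuinely delicate point is not the inequality chain but the reduction to $u'\in\ker D''$ together with the measurability/domain bookkeeping: one must check that the pointwise bound $|\langle u',v\rangle|^2\le\langle A_mu',u'\rangle\langle A_m^{-1}v,v\rangle$ is legitimate even where $A_m(x)$ is non-invertible — here the finiteness $\int_X\langle A_m^{-1}v,v\rangle<\infty$ must be interpreted (following the convention fixed in \cref{rem:converse}) as asserting that $v(x)\in\Ima A_m(x)$ for a.e.\ $x$ with $\langle A_m^{-1}v,v\rangle$ the minimal-norm preimage pairing, and that the resulting function is measurable. The other subtlety is justifying $\lVert D''^*u'\rVert\ge\lVert D''^*u'_1\rVert$ after the orthogonal decomposition; this is standard (the component in $\overline{\Ima D''^*}$ is handled by noting $\llangle\cdot,v\rrangle$ annihilates it and that \cref{neumann} gives the orthogonal splitting $\sH_2=\ker D''\oplus\overline{\Ima D''^*}$), but it is where the completeness hypothesis on $(X,\omega)$ — invoked through \cref{thm:norm} and \cref{lem:integration} — is essential, since without density of compactly supported forms in the graph norm the integration by parts underlying \cref{lem:integration} would fail.
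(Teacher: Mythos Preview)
Your proposal is correct and follows essentially the same route as the paper: set up $T=D''$ and $S=D''$ on the three Hilbert spaces, combine the pointwise Cauchy--Schwarz inequality for $A_m$ with \cref{lem:integration} to get $|\llangle f,v\rrangle|^2\le C(\lVert Sf\rVert^2+\lVert T^*f\rVert^2)$ for $f\in\Dom S\cap\Dom T^*$, then reduce a general $f\in\Dom T^*$ to its $\ker S$-component via the orthogonal splitting from \cref{neumann}, and finally invoke \cref{lem:hormander}.

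The one place where your write-up is muddled is the justification of $\lVert T^*u'_1\rVert=\lVert T^*u'\rVert$. You write $u'_2\in(\ker D'')^\perp=\overline{\Ima D''^*}$ and then trail off; the clean reason, which the paper uses explicitly, is that $ST=0$ implies $T^*S^*=0$, so $(\ker S)^\perp=\overline{\Ima S^*}\subset\ker T^*$, whence $T^*u'_2=0$ and $T^*u'=T^*u'_1$ exactly. This also automatically gives $u'_1=u'-u'_2\in\Dom T^*$ (and $u'_1\in\ker S\subset\Dom S$), so \cref{lem:integration} applies to $u'_1$ without further domain checks. With this one clarification your argument matches the paper's proof.
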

\begin{proof}
Consider now two closed and densely defined operators
	$$
	\sH_1=L^{m-1}_{(2)}(M,E)\xrightarrow{T=D''}\sH_2=L^{m}_{(2)}(M,E)\xrightarrow{S=D''} \sH_3=L^{m+1}_{(2)}(M,E).$$
For any $f\in \Dom S\cap \Dom T^*$, one has
\begin{align*} 
|\llangle f,v\rrangle|^2=|\int_M \langle f,v\rangle d\vol_{\omega_M}|^2\leq  |\int_M  \langle A_m^{-1}v,v\rangle^{\frac{1}{2}}  \cdot  \langle A_mf,f\rangle^{\frac{1}{2}} d\vol_{\omega_M}|^2\\
\leq \int_M \langle A_m^{-1}v,v\rangle d\vol_{\omega_M}\cdot \int_M \langle A_mf,f\rangle d\vol_{\omega_M}
\end{align*}
by Cauchy-Schwarz inequality. By \eqref{eq:Kod} one has
\begin{align}\label{eq:norm}
|\llangle f,v\rrangle|^2\leq C ( \lVert Sf\rVert^2+\lVert T^*f\rVert^2), 
\end{align}
where $C:=\int_M \langle A_m^{-1}v,v\rangle d\vol_{\omega_M}>0$.

Note that $T^*\circ S^*=0$ by $S\circ T=0$.  For any $f\in \Dom T^*$, there is an orthogonal decomposition   $f=f_1+f_2$, where $f_1\in \ker S$ and $f_2\in (\ker S)^{\perp}=\overline{\Ima S^*}\subset \ker T^*$ by \cref{neumann:perp}. Since $v\in \ker S$, by \eqref{eq:norm} we then have
$$
|\llangle f,v\rrangle|^2=|\llangle f_1,v\rrangle|^2\leqslant C ( \lVert Sf_1\rVert^2+\lVert T^*f_1\rVert^2)=C\lVert T^*f_1\rVert^2=C\lVert T^*f\rVert^2.
$$
By Hahn-Banach Theorem, we conclude that there is $u\in \Dom T$ so that $Tu=v$ with $\lVert u\rVert_2\leq C^{1/2}$. The theorem is proved. 
\end{proof}

A direct consequence is the following result which can be seen as a Higgs bundle version of Girbau vanishing theorem (see \cite[Chapter \rom{7}, Theorem 4.2]{Dembook}) in the log setting \cite[Theorem 4.1]{HLWY16}.  
\begin{cor}\label{cor:AN}
	Let $(M,\omega_M)$ be a complete K\"ahler manifold, and $(\tilde{E},\tilde{\theta},\tilde{h})$ be any harmonic bundle on $M$.  Let $L$ be  a   line bundle   on $M$ equipped with a Hermitian metric $h_L$.   Assume that  
\begin{align}\label{eq:critical}
	\langle [\sn R(h_L), \Lambda_{\omega_M}]f, f\rangle_{h_L, \omega_M}\geq \ep \lvert f\rvert_{h_L, \omega_M}^2
\end{align} 
for any $x\in M$ and $f\in (\Lambda^{p,q}T_M^*\otimes L)_x$ with $p+q=m$.  Set $(E,\theta,h):=(\tilde{E}\otimes L,\tilde{\theta}\otimes \vvmathbb{1}_L,\tilde{h}h_L)$. 
Then for any $v\in L^m_{(2)}(M,E)$  such that $D''v=0$, 	there exists $u\in   L^{m-1}_{(2)}(M,E)$ so that $D''u=v$ and
$$
\lVert u\rVert^2\leqslant \frac{\lVert v\rVert^2}{\ep}.
$$
\end{cor}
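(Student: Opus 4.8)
The plan is to deduce the statement directly from \cref{thm:L2}: once the curvature operator $A_m$ of the twisted Higgs bundle $(E,\theta,h)=(\tilde E\otimes L,\tilde\theta\otimes\mathrm{id}_L,\tilde h\,h_L)$ has been computed, the hypothesis \eqref{eq:critical} will force $A_m$ to be a uniformly positive multiple of the identity, so that $A_m^{-1}\le\varepsilon^{-1}\mathrm{id}$ pointwise and both the integrability condition and the norm bound in \cref{thm:L2} become automatic.

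First I would verify that $(E,\theta,h)$ satisfies \Cref{assum} and compute $\sqrt{-1}F(h)$. Since $L$ carries no Higgs field, $\theta^*_h=\tilde\theta^*_{\tilde h}\otimes\mathrm{id}_L$, hence $\db_E\theta^*_h=(\db_{\tilde E}\tilde\theta^*_{\tilde h})\otimes\mathrm{id}_L$, and $\db_{\tilde E}\tilde\theta^*_{\tilde h}=0$ because this is the $(0,2)$-component of the integrability identity $D_{\tilde h}^2=0$ of the harmonic metric $\tilde h$; so \Cref{assum} holds. By \eqref{eq:assumption} and the additivity of Chern curvature under tensor products,
\[
\sqrt{-1}F(h)=\sqrt{-1}\bigl(R(\tilde h)+[\tilde\theta,\tilde\theta^*_{\tilde h}]\bigr)\otimes\mathrm{id}_L+\mathrm{id}_{\tilde E}\otimes\sqrt{-1}R(h_L)=\sqrt{-1}R(h_L)\otimes\mathrm{id}_E,
\]
the first summand being $\sqrt{-1}F(\tilde h)=0$, again by $D_{\tilde h}^2=0$. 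Thus $A_m=[\sqrt{-1}F(h),\Lambda_\omega]$ acts on $\wedge^mT_X^*\otimes E$ as $[\sqrt{-1}R(h_L),\Lambda_\omega]\otimes\mathrm{id}_E$; in a local $h$-orthonormal frame $(e_i)$ of $E$ one has $A_m\bigl(\sum_i f_i\otimes e_i\bigr)=\sum_i\bigl([\sqrt{-1}R(h_L),\Lambda_\omega]f_i\bigr)\otimes e_i$, and since $\sqrt{-1}R(h_L)$ is of type $(1,1)$ the commutator preserves bidegree, so applying \eqref{eq:critical} componentwise in bidegree gives $\langle A_m F,F\rangle_{h,\omega}\ge\varepsilon\,|F|^2_{h,\omega}$ for every $F\in(\wedge^mT_X^*\otimes E)_x$ and every $x\in X$. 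In particular $A_m$ is positive definite, hence invertible, with $A_m^{-1}\le\varepsilon^{-1}\mathrm{id}$ pointwise.

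Finally I would apply \cref{thm:L2} to this $A_m$: for $v\in L^m_{(2)}(X,E)$ with $D''v=0$ one has $\int_X\langle A_m^{-1}v,v\rangle\,d\vol_\omega\le\varepsilon^{-1}\lVert v\rVert^2<+\infty$, and \cref{thm:L2} then produces $u\in L^{m-1}_{(2)}(X,E)$ with $D''u=v$ and $\lVert u\rVert^2\le\int_X\langle A_m^{-1}v,v\rangle\,d\vol_\omega\le\lVert v\rVert^2/\varepsilon$, which is exactly the assertion. I do not expect any genuine obstacle: the argument is bookkeeping on top of \cref{thm:L2}, the one step worth stating carefully being the cancellation $R(\tilde h)+[\tilde\theta,\tilde\theta^*_{\tilde h}]=0$ for a harmonic bundle — equivalently $F(\tilde h)=0$ — which is precisely the integrability $D_{\tilde h}^2=0$ read through \eqref{eq:assumption}.
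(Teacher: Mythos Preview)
Your proposal is correct and follows essentially the same route as the paper: compute $\sqrt{-1}F(h)=\sqrt{-1}R(h_L)\otimes\vvmathbb{1}_E$ via the harmonicity $F(\tilde h)=0$, deduce $\langle A_mF,F\rangle_{h,\omega}\ge\varepsilon|F|^2_{h,\omega}$ from \eqref{eq:critical} by decomposing into bidegrees, and then invoke \cref{thm:L2} with the pointwise bound $\langle A_m^{-1}v,v\rangle\le\varepsilon^{-1}|v|^2$. Your extra care in checking \Cref{assum} and spelling out the $E$-frame decomposition is fine but not a departure from the paper's argument.
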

\begin{proof}
	Note that since $(\tilde{E},\tilde{\theta},\tilde{h})$ is a harmonic bundle, both $(\tilde{E},\tilde{\theta},\tilde{h})$ and $(E,\theta,h)$ satisfy \Cref{assum}. Hence
	\begin{align}\nonumber
		\sn F(h)&=\sn \Big(R(h)+[\theta,\theta_h^*] \Big)\\\nonumber
		&=\sn R(\tilde{h})\otimes \vvmathbb{1}_L+ \sn R(h_L)\otimes \vvmathbb{1}_{E}+[\tilde{\theta}\otimes \vvmathbb{1}_L, \tilde{\theta}_{\tilde{h}}^*\otimes \vvmathbb{1}_L]\\\nonumber
		&=\sn F(\tilde{h})\otimes \vvmathbb{1}_L+  \sn R(h_L)\otimes \vvmathbb{1}_{E}\\\label{eq:formula}
		&= \sn R(h_L)\otimes \vvmathbb{1}_{E},
	\end{align}
	where the last equality follows from that  $F(\tilde{h})=0$ since $(\tilde{E},\tilde{\theta},\tilde{h})$   is a harmonic bundle. 
	In this case, it is easy to see that for any $f\in (\Lambda^{m}T_M^*\otimes E)_x$,   decomposing  $f=\sum_{p+q=m}f^{p,q}$ with $f^{p,q}$ its $(p,q)$-component,  one has
	$$
	\langle A_mf,f\rangle_{h,\omega_M}=	\sum_{p+q=m}\langle [\sn R(h_L), \Lambda_{\omega_M}]\otimes \vvmathbb{1}_E(f^{p,q}), f^{p,q}\rangle_{h_L,\omega_M}\geq  \sum_{p+q=m} \ep\lvert f^{p,q}\rvert^2_{h,\omega_M}= \ep\lvert f\rvert^2_{h,\omega_M}.
	$$
	Hence $	\langle A_m^{-1}f,f\rangle_{h,\omega_M}\leq  \ep^{-1}\lvert f\rvert^2_{h,\omega_M}.$
 Applying \cref{thm:L2},  we conclude that there is $u\in  L^{m-1}_{(2)}(M,E) $ so that $D''u=v$ and 
$$
\lVert u\rVert^2\leqslant \int_M	\langle A^{-1}_mv,v\rangle_{h,\omega_M}d\vol_{\omega_M}\leq \frac{\lVert v\rVert^2}{\ep}.
$$
\end{proof}

\section{Vanishing theorem for tame   harmonic bundles}\label{sec:main}
\subsection{Parabolic Higgs bundle}\label{sec:parahiggs}

In this section, we recall the notions of parabolic Higgs bundles. For more details refer to \cite[section 1, 3, 4, 5]{AHL19} and \cite[section 1]{MY92}. Let $X$ be a complex manifold, $D=\sum_{i=1}^{\ell}D_i$ be a reduced simple normal crossing divisor, $U=X-D$ be the complement of $D$ and $j:U\to X$ be the inclusion.

\begin{dfn}\label{dfn:parab-higgs}
	A parabolic sheaf $(E,{}_{ \bm{a}}E)$ on $(X, D)$ is a torsion free
	$\mathcal{O}_U$-module $E$, together with an $\mathbb{R}^l$-indexed
	filtration ${}_{ \bm{a}}E$ ({\em parabolic structure}) by coherent subsheaves of $j_*E$ such that
	\begin{enumerate}[leftmargin=0.7cm]
		\item $\bm{a}\in \mathbb{R}^l$ and ${}_{\bm{a}}E|_U=E$. 
		\item  For $1\leq i\leq l$, ${}_{\bm{a}+\bm{1}_i}E = {}_{\bm{a}}E\otimes \cO_X(D_i)$, where $\bm{1}_i=(0,\ldots, 0, 1, 0, \ldots, 0)$ with $1$ in the $i$-th component.
		\item $_{\bm{a}+\bm{\epsilon}}E = {}_{\bm{a}}E$ for any vector $\bm{\epsilon}=(\epsilon, \ldots, \epsilon)$ with $0<\epsilon\ll 1$.
		\item  The set of {\em weights} \{$\bm{a}$\ |\  $_{\bm{a}}E/_{\bm{a}-\bm{\epsilon}}E\not= 0$  for any vector $\bm{\epsilon}=(\epsilon, \ldots, \epsilon)$ with $0<\epsilon\ll 1$\}  is  discrete in $\mathbb{R}^l$.
	\end{enumerate}
\end{dfn}
A weight
is normalized if it lies in $[0,1)^l$. Denote $_{\bm{0}}E$ by $\diae$, where $\bm{0}=(0, \ldots, 0)$.  Note that the parabolic structure of $(E,{}_{ \bm{a}}E)$ is uniquely determined by the filtration for weights lying in $[0,1)^l$. A {\em parabolic bundle} on $(X,D)$ consists of a
vector bundle $E$ on $X$ with a parabolic structure, such that the filtered subsheaves ${}_{ \bm{a}}E$ are vector bundles.  As pointed out by the referee, by the work of Borne-Vistoli the parabolic structure of a parabolic bundle  is  \emph{locally abelian}, \emph{i.e.} it admits a local frame compatible with the filtration (see e.g. \cite{IS07} and \cite{BV12}). 

\begin{dfn}
	A {\em parabolic Higgs bundle} on $(X,D)$ is a parabolic
	bundle $(E,{}_{ \bm{a}}E,\theta)$ together with $\sO_X$ linear map
	$$\theta:\diae\to \Omega_X^1(\log D)\otimes \diae$$
	such that
	$$\theta\wedge \theta=0$$
	and
	$$\theta(_{\bm{a}}E)\subseteq \Omega_X^1(\log D)\otimes {}_{\bm{a}}E,$$ for $\bm{a}\in [0, 1)^l$.
\end{dfn}

A natural class of parabolic Higgs bundles comes from prolongations of tame harmonic bundles, which is discussed in the following section.

\subsection{Prolongation by an increased order}\label{sec:prolongation}
By a celebrated theorem of Simpson and Mochizuki, there is a natural parabolic Higgs bundle induced by tame harmonic bundle $(E, \theta, h)$. 

We recall some notions in \cite[\S 2.2.1]{Moc07}.  Let $(X, D)$ be the pair in subsection \ref{sec:parahiggs}.  Let $E$ be a holomorphic vector bundle with a $\sC^\infty$ hermitian metric $h$ over $X-D$.

Let $U$ be an open subset of $X$ with an admissible coordinate $(U; z_1, \ldots, z_n)$ with respect to $D$. For any section $\sigma\in \Gamma(U-D,E|_{U-D})$, let $|\sigma|_h$ denote the norm function of $\sigma$ with respect to the metric $h$. We denote $|\sigma|_h\in \cO(\prod_{i=1}^{\ell}|z_i|^{-b_i})$ if there exists a positive number $C$ such that $|\sigma|_h\leq C\cdot\prod_{i=1}^{\ell}|z_i|^{-b_i}$. For any $\bm{b}\in \bR^\ell$, say $-\mbox{ord}(\sigma)\leq \bm{b}$ means the following:
$$
|\sigma|_h=\cO(\prod_{i=1}^{\ell}|z_i|^{-b_i-\varepsilon})
$$
for any real number  $\varepsilon>0$ and $0<|z_i|\ll1$. For any $\bm{b}$, the sheaf ${}_{\bm{b}} E$ is defined as follows: 
\begin{align}\label{eq:prolongation}
\Gamma(U-D, {}_{\bm{b}} E):=\{\sigma\in\Gamma(U-D,E|_{U-D})\mid -\mbox{ord}(\sigma)\leq \bm{b} \}. 
\end{align}
The sheaf ${}_{\bm{b}} E$ is called the prolongment of $E$ by an increasing order $\bm{b}$. In particular, we use the notation ${}^\diamond E$ in the case $\bm{b}=(0,\ldots,0)$.

According to  Simpson \cite[Theorem 2]{Sim90} and Mochizuki \cite[Theorem 8.58]{Moc07}, the above prolongation gives a parabolic Higgs bundles, especially $\theta$ preserves the filtration.
\begin{thm}[Simpson, Mochizuki] \label{thm:SM} Let $(X, D)$ be a complex manifold $X$ with a simple normal crossing divisor $D$. If $(E , \theta, h)$ is a tame harmonic bundle on $X-D$, then the corresponding filtration $_{\bm{b}}E$ according to the increasing order in the prolongment of $E$ defines a parabolic bundle $(E, {}_{\bm{b}}E, \theta)$ on $(X,D)$.  	\qed
\end{thm}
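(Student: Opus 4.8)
The plan is to prove the Simpson--Mochizuki prolongation theorem by reducing to the local, one-dimensional picture, then gluing. First I would fix an admissible coordinate $(U;z_1,\ldots,z_n)$ around a point $p$ lying on $D_1\cap\cdots\cap D_\ell$, so that $U^*=U-D\cong(\Delta^*)^\ell\times\Delta^{n-\ell}$, and analyze $\diae=\,{}^\diamond E$ defined by \eqref{eq:prolongation}. The key input is \cref{thm:moc}: the norm estimate $|f_j|_h\le C$, $|g_k|_h\le C$ for the matrix-valued functions in the local expression \eqref{eq:local} of $\theta$. The strategy is: (1) show $\diae$ is coherent and locally free of rank $r=\rank E$; (2) verify that the filtration ${}_{\bm b}E$ satisfies the parabolic axioms (1)--(4); (3) show $\theta$ extends to a logarithmic Higgs field $\theta:\diae\to\diae\otimes\Omega^1_X(\log D)$ preserving each ${}_{\bm a}E$ for $\bm a\in[0,1)^\ell$.

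For step (1), the heart of the matter is the construction of an adapted holomorphic frame near $p$. One uses that the harmonic metric $h$, together with tameness, forces the eigenvalues of the residues of $\theta$ along each $D_i$ to be constant, and one performs the standard simultaneous block-diagonalization: decompose $E|_{U^*}$ according to the (generalized) eigenvalues of the residue endomorphisms, and within each generalized-eigenspace piece use a parabolic ``twist'' by $\prod|z_i|^{2b_i}$ to normalize. After this normalization the metric becomes ``adapted'' (mutually bounded with the standard metric on a trivial bundle, up to log factors absorbed by the sub-polynomial growth), which is exactly Mochizuki's norm estimate for the full harmonic bundle. Concretely I would invoke the acceptability of $h$ (the boundedness $|R(h)|_{h,\omega_P}\le C$, analogous to \cref{lem:acceptable}) and the one-variable prolongation results of Simpson to get, on each $\Delta^*$-factor, a holomorphic frame in which $h$ is mutually bounded with a diagonal model metric; then one assembles these using that the Higgs field's residues commute up to the tameness hypothesis. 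That a coherent subsheaf of $\iota_*E$ characterized by a growth condition is actually locally free follows once one exhibits such a frame: the sections of ${}^\diamond E$ over $U$ are precisely the $\sO_U$-span of the normalized frame vectors, hence ${}^\diamond E|_U\cong\sO_U^{\oplus r}$.

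For steps (2) and (3): axiom (1) is immediate from the definition; axiom (2), ${}_{\bm a+\bm 1_i}E={}_{\bm a}E(-D_i)$, is a direct unwinding of \eqref{eq:prolongation} since multiplying the growth bound $b_i$ by shifting to $b_i+1$ corresponds exactly to tensoring by the ideal sheaf of $D_i$; axioms (3) and (4) (left-continuity and discreteness of jumps) follow from the fact that the jumps of the filtration occur only at the real parts of the residue eigenvalues, which are finite in number by tameness. For step (3), one writes $\theta=\sum_j f_j\,d\log z_j+\sum_k g_k\,dz_k$ on $U^*$; by \cref{thm:moc} the $f_j$ and $g_k$ are bounded with respect to $h$, hence, for a section $\sigma$ of ${}_{\bm b}E$ with $-\mathrm{ord}(\sigma)\le\bm b$, each $f_j\sigma$ and $g_k\sigma$ again satisfies $-\mathrm{ord}(\cdot)\le\bm b$ (boundedness does not worsen the growth order by more than an $\varepsilon$, which is absorbed in the definition). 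Therefore $\theta(\sigma)$ lies in $\Omega^1_X(\log D)\otimes{}_{\bm b}E$, giving the desired logarithmic extension; compatibility with the filtration for $\bm a\in[0,1)^\ell$ is the same estimate applied fiberwise. The condition $\theta\wedge\theta=0$ on $X$ holds because it holds on the dense open $U^*$ and $\diae\otimes\Omega^2_X(\log D)$ is torsion-free.

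The main obstacle is step (1), specifically producing the adapted frame and proving local freeness: this is where the analytic estimates genuinely enter and where one cannot avoid some version of the norm asymptotics for the harmonic metric near $D$ (the diagonalization of residues, the KMS-type spectral decomposition, and checking that the ``sub-polynomial growth'' sheaf coincides with the span of a holomorphic frame rather than something larger). Everything else is bookkeeping with the growth conditions. In the write-up I would quote Simpson's one-dimensional prolongation (\cite{Sim90}) and Mochizuki's higher-dimensional version (\cite{Moc07,Moc09}) for the adapted-frame construction and then give the short arguments for (2)--(3) in detail, since those are what the rest of the paper actually uses when it identifies the $L^2$-complex with $\mathrm{Dol}(\diae,\theta)$.
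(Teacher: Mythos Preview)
The paper does not prove this statement: it is quoted as a known result of Simpson \cite{Sim90} and Mochizuki \cite{Moc09} and is stated with a terminal \qed\ and no argument. So there is no ``paper's own proof'' to compare your proposal against.

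That said, your outline is a reasonable sketch of how the Simpson--Mochizuki argument actually goes, and the way you single out step~(1) (constructing an adapted holomorphic frame via the norm estimates and acceptability) as the genuine analytic core is accurate. Your arguments for steps~(2) and~(3) are essentially correct bookkeeping; in particular, your use of \cref{thm:moc} to show that $\theta$ preserves each ${}_{\bm b}E$ is exactly the kind of reasoning the paper relies on elsewhere (e.g.\ in \cref{lem:crucial}). If you were asked to supply a proof here, the honest thing to do is what you suggest at the end: cite \cite{Sim90} for the curve case and \cite{Moc07,Moc09} for the higher-dimensional adapted-frame construction, and only spell out the easy growth-condition verifications, since the frame construction itself is substantial and not something one would redo in this paper.
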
 

Here we also recall the following definition in \cite[Definition 2.7]{Moc07}.
\begin{dfn}[Acceptable bundle]
	Let $(E,\db_E,h)$  be a hermitian vector bundle over $X-D$. We say that $(E,\db_E,h)$ is acceptable at $p\in D$, if the following holds: there is an  admissible coordinate $(U;z_1,\ldots,z_n)$ around $p$, so that the norm  $\lvert R(E,h) \rvert_{h,\omega_P}\leq C$ for some $C>0$. When $(E_,\db_E,h)$ is acceptable at any point $p$ of $D$, it is called acceptable.  
\end{dfn} 

The following deep result by Mochizuki \cite[Proposition 8.18]{Moc07} will play an important role throughout this paper. 
\begin{thm}[Mochizuki]\label{thm:acceptable}
	Let $X$ be a complex manifold and let $D$ be a simple normal crossing divisor on $X$. Assume that $(E,\theta,h)$ is a tame harmonic bundle on $X-D$. Then $(E,h)$ is acceptable. 
\end{thm}
\subsection{Modification of the metric}\label{sec:dolbealt} 
In this subsection, we work with the following modification of acceptable metric defined in \cite[\S 4.5.3]{Moc02}. 
  Let us consider the case $X=\Delta^n$, and $D=\sum_{i=1}^{\ell}D_i$ with $D_i=(z_i=0)$. Let $(E,\db_E,h)$ be an acceptable bundle over $X-D$. For any $\bm{a}\in \mathbb{R}^\ell_{\geq 0}$ and $N\in \mathbb{Z}$, we define
  \begin{align}\label{eq:modify}
  \chi(\bm{a},N):= -\sum_{j=1}^{\ell}a_j\log |z_j|^{2}- N\big(\sum_{j=1}^{\ell}\log   (-\log |z_j|^2)+ \sum_{k=\ell+1}^{n}\log(1-|z_k|^2)\big).
  \end{align}
  Set $h(\bm{a},N):=h\cdot e^{-\chi(\bm{a},N)}$. Then 
  $$
  R(h(\bm{a},N))=R(h)+\hess \chi(\bm{a},N)=R(h)+N\omega_P.
  $$
  
Note that $\Omega_{X^*}=\oplus_{i=1}^{n}L_i$
where $L_i$ is the trivial line bundle defined by $L_i:=p_i^*\Omega_{\Delta^*}$ for $i=1,\ldots,\ell$ and $L_k=p_k^*\Omega_{\Delta}$ for $k=\ell+1,\ldots,n$ where $p_i$ is the projection of $(\Delta^*)^\ell\times\Delta^{n-\ell}$ to its $i$-th factor. For any $p=1,\ldots,n$, set $h_p$ to be the hermitian metric on $\Omega_{X^*}^p$ induced by $\omega_P$. Then  there is a positive constant $C(p,\ell)>0$ depending only on $p$ and $\ell$ so that $|R(h_p)|_{h_p,\omega_P}\leq C(p,\ell)$.  
Set $C_0:=\sup_{p=0,\ldots,n;\ell=1,\ldots,n}C(p,\ell)$.

  
 \begin{prop}\label{lem:estimate}
 	Let $(E,\db_E,h)$ be an acceptable bundle over $X-D$, where $X$ is a compact complex manifold and $D$ is a simple normal crossing divisor. Then there is a constant $N_0>0$ so that, for any   $x\in D$, one has an admissible coordinate $(U;z_1,\ldots,z_n)$ around $x$ (which can be made arbitrary small) satisfying the following property:
 	
 	For   vector bundles $\sE_p:= T_{U^*}^p\otimes E$ and $\sF_p:=\Omega_{U^*}^p\otimes E$, which are all equipped with the   $\sC^\infty$-metric $h_{\sE_p}$ and $h_{\sF_p}$ induced by  $h(\bm{a},N)$ and $\omega_P$, one  has the following estimate
\begin{align} \label{eq:estimate 2}
  \sqrt{-1}R(h_{\sE_p})\geqslant_{\nak} \omega_P\otimes \vvmathbb{1}_{\sE_p}; \quad 
 \sqrt{-1}R(h_{\sF_p})\leqslant_{\gri} 2N \omega_P\otimes \vvmathbb{1}_{\sF_p}
 \end{align}
over $U^*$ for any $N\geqslant N_0$. Such $N_0$ does not depend on the choice of $\bm{a}$. 
 \end{prop}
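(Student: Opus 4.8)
The plan is to establish the two curvature estimates by directly computing the Chern curvatures of $h_{\sE_p}$ and $h_{\sF_p}$ and showing that the dominant contribution comes from the factor $N\omega_P$ in $R(h(\bm a,N)) = R(h) + N\omega_P$, which swamps all the bounded error terms once $N$ is large. First I would fix $p\in D$ and an admissible coordinate $(U;z_1,\dots,z_n)$ around $p$; by shrinking $U$ we may assume (using acceptability) that $|R(E,h)|_{h,\omega_P}\le C$ on $U^*$ for a uniform constant $C$, and also that the metrics $h_p$ on $\Omega^p_{U^*}$ induced by $\omega_P$ satisfy $|R(h_p)|_{h_p,\omega_P}\le C_0$, where $C_0$ is the universal constant introduced just before the statement. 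The key algebraic input is the Leibniz rule for Chern curvature under tensor products: since $\sE_p = T^p_{U^*}\otimes E$ carries the metric induced by $h(\bm a,N)$ on $E$ and $\omega_P$ on $T^p_{U^*}$, one has
\[
R(h_{\sE_p}) = R(h_{T^p_{U^*}})\otimes \vvmathbb{1}_E + \vvmathbb{1}_{T^p_{U^*}}\otimes R(h(\bm a,N)) = R(h_{T^p_{U^*}})\otimes \vvmathbb{1}_E + \vvmathbb{1}_{T^p_{U^*}}\otimes\big(R(h) + N\omega_P\big),
\]
and similarly for $\sF_p$ with $\Omega^p_{U^*}$ in place of $T^p_{U^*}$.

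Next I would estimate the two error terms in Nakano (resp.\ Griffiths) sense using \cref{lem:acceptable}: the bound $|R(h)|_{h,\omega_P}\le C$ gives $-C\,\omega_P\otimes\vvmathbb{1}\le_{\nak} R(h)\le_{\nak} C\,\omega_P\otimes\vvmathbb{1}$, and the bound $|R(h_p)|_{h_p,\omega_P}\le C_0$ (for the metric on $T^p_{U^*}$ or $\Omega^p_{U^*}$, which differ only by a sign/dual and have the same curvature norm bound) gives $-C_0\,\omega_P\otimes\vvmathbb{1}\le_{\nak} R(h_{T^p_{U^*}})\le_{\nak} C_0\,\omega_P\otimes\vvmathbb{1}$, and likewise in the Griffiths sense. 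Combining, for $\sE_p$ we get
\[
R(h_{\sE_p}) \;\ge_{\nak}\; \big(N - C - C_0\big)\,\omega_P\otimes\vvmathbb{1}_{\sE_p},
\]
so it suffices to take $N_0 := C + C_0 + 1$ to obtain $R(h_{\sE_p})\ge_{\nak}\omega_P\otimes\vvmathbb{1}_{\sE_p}$. For $\sF_p$ we get $R(h_{\sF_p}) \le_{\gri} (N + C + C_0)\,\omega_P\otimes\vvmathbb{1}_{\sF_p}$, and since $C + C_0 \le N$ once $N\ge N_0$, this is $\le_{\gri} 2N\,\omega_P\otimes\vvmathbb{1}_{\sF_p}$, as required. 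Crucially, $C$ depends only on $(E,\theta,h)$ near $p$ and $C_0$ is universal, so $N_0$ is independent of $\bm a$ — indeed $\bm a$ enters only through the $\partial\bar\partial$-exact term $-\sum a_j\log|z_j|^2$ in $\chi(\bm a,N)$, whose Hessian is zero, so it contributes nothing to the curvature.

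The main obstacle I anticipate is not any one estimate but the bookkeeping of \emph{uniformity}: one must check that a single constant $C_0$ works simultaneously for all $p\in D$ and all values of $\ell\in\{1,\dots,n\}$ (the number of branches of $D$ through $p$) and all $p$-th exterior powers — this is exactly why the paper pre-defines $C_0 = \sup_{p,\ell} C(p,\ell)$ — and that the acceptability constant $C$ for $(E,h)$ can be taken locally uniform after shrinking $U$. Since $X$ is compact and $D$ is simple normal crossing, a finite cover by admissible coordinate charts reduces everything to finitely many local bounds, which can then be combined into the single threshold $N_0$. One also needs the elementary observation that passing between $T^p_{U^*}$ and its dual $\Omega^p_{U^*}$ negates the curvature but preserves its operator norm and interchanges Nakano-type lower bounds with Griffiths-type upper bounds (with the appropriate sign), which accounts for the asymmetry between the two inequalities in \eqref{eq:estimate 2}; the Nakano bound is the one used on the $\sE_p = T^p\otimes E$ side for the eventual Bochner argument, while only a Griffiths bound is needed — and available — on the $\sF_p$ side.
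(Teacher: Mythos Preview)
Your core argument is correct and essentially matches the paper's: bound the curvature of the tensor factors and absorb the bounded error into the $N\omega_P$ term. One minor technical variation is that you apply \cref{lem:acceptable} separately to $R(h)$ and to $R(h_{T^p})$ and then add the Nakano lower bounds for the tensor product (which is valid, since a Nakano lower bound on a factor tensors to one on the product); the paper instead first invokes \cref{lem:tensor} to bound $|R(h_p^{-1}h)|$ and $|R(h_ph)|$ and then applies \cref{lem:acceptable} once. Both routes yield a threshold independent of $\bm a$.

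Where your proposal is thin is precisely the part you flag as the ``main obstacle'': uniformity in $p\in D$ and the clause ``which can be made arbitrarily small''. A single admissible chart around a deepest-stratum point $x$ is \emph{not} an admissible coordinate for a nearby point $z$ lying on a shallower stratum (the number $\ell$ of divisor components, hence the very shape of the model $(\Delta^*)^\ell\times\Delta^{n-\ell}$ and its Poincar\'e metric, changes), so ``finite cover plus compactness'' is not by itself enough. The paper handles this by explicitly manufacturing an admissible coordinate at each nearby $z$ out of the one at $x$, using automorphisms of $\Delta$ and $\Delta^*$ together with a fixed embedding $\phi:\Delta\to\Delta^*$ whose pullback of the punctured-disk Poincar\'e metric is bounded by the disk Poincar\'e metric; this shows the acceptability constant only worsens by a fixed factor, so a single $N_0$ works on all of $U$. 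The ``arbitrarily small'' part needs a separate observation: the scaling $\phi_\ep(z)=\ep z$ satisfies $\phi_\ep^*\omega_P\le\omega_P$, so shrinking the chart does not increase $|R(h)|_{h,\omega_P}$ and the same $N_0$ persists. These are the steps you should fill in; the rest of your argument is fine.
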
 
\begin{proof}
	As $(E,h)$ is assumed to be acceptable,  for any $x\in D$, one can find an admissible coordinate    $(U;z_1,\ldots,z_n;\varphi)$ around $x$ so that $|R(h)|_{h,\omega_P}\leq C$. By the above argument, one has $|R(h_p)|_{h_p,\omega_P}\leq C_0$ for the Hermitian metric $h_p$ on $\Omega^p_{U^*}$. By \cref{lem:tensor}, we conclude that there is a constant $C_1>0$ which depends only on $C_0$ and $C$ so that
	$$
	|R(h^{-1}_ph)|_{h_p^{-1}h,\omega_P}\leq C_1,\quad 	|R(h_{p}h)|_{h_ph,\omega_P}\leq C_1
	$$   
	for any $p=0,\ldots,n$, 
	where $h^{-1}_ph$ is the metric for $\sE_p$ and $h_ph$ is the metric for $\sF_p$.  By   \cref{lem:acceptable}, one has
	$$
	\sqrt{-1}R(h^{-1}_ph)\geq_{\nak}  -C_1\omega_P\otimes\vvmathbb{1}_{\sE_p},\quad 	\sqrt{-1}R(h_{p}h)\leq_{\nak}C_1\omega_P\otimes \vvmathbb{1}_{\sF_p}.
	$$
As $h_{\sE_p}=h^{-1}_ph(\bm{a},N)$  and  $h_{\sF_p}=h_ph(\bm{a},N)$, we then have 
	$$
\sqrt{-1}R(h_{\sE_p})\geq_{\nak}  (N-C_1)\omega_P\otimes\vvmathbb{1}_{\sE_p},\quad 	\sqrt{-1}R(h_{\sF_p})\leq_{\nak}(N+C_1)\omega_P\otimes \vvmathbb{1}_{\sF_p}.
$$
	If we take $N_{x}=C_1+1$, then the desired estimate \eqref{eq:estimate 2}  follows for any  $N\geq N_x$.
	 
	Now we will prove that for points near $x$, the above estimate $N_x$ holds uniformly. As $C_1$ depends only on $C$, one has to prove that there is a constant $C$ so that for any point $z$ near $x$, there is an admissible coordinate with respect to $z$ so that $|R(h)|_{h,\omega_P}\leq  C$. 
	\begin{claim}
		Let $\phi:\Delta\to \Delta^*$ defined by $\phi(t)=\frac{t}{4}+\frac{1}{2}$. Then $$\phi^*\frac{\sqrt{-1}dz\wedge d\bar{z}}{|z|^2(\log |z|^2)^2}=\frac{\sqrt{-1}dt\wedge d\bar{t}}{|\phi(t)|^2(\log |\phi(t)|^2)^2}\leq  C_2\sqrt{-1}dt\wedge d\bar{t}\leq C_2\frac{\sqrt{-1}dt\wedge d\bar{t}}{(1-|t|^2)^2},$$ 
		where $C_2=4 (\log \frac{3}{4})^{-2}$.
	\end{claim}  For any $z\in U$, we first assume that $z_1=\cdots=z_\ell =0$, namely the components of $D$ passing to $z$ are the same as $x$. Take isomorphisms of unit disk $\{\phi_j\in \text{Aut}(\Delta)\}_{j=\ell+1,\ldots,n}$ so that $\phi_j(z_j)=x_j$. Note that $x_1=\cdots=x_\ell=0$. Hence $(\vvmathbb{1}_\Delta,\ldots,\vvmathbb{1}_\Delta,\phi_{\ell+1},\ldots,\phi_n)\circ \varphi:U\to \Delta^n$ gives rise to the admissible coordinate for $z$, and the Poincar\'e metric $\omega_P$ is invariant under this transformation. Hence one can take $N_z=N_x$.
	
	Now we can assume that $z_1=\cdots=z_m=0$, and that any of $\{z_{m+1},\ldots,z_{\ell}\}$ is   not equal to zero, for $m<l$.  We first take automorphisms  $\{\eta_i\}_{i=m+1,\ldots,\ell}\subset\text{Aut}(\Delta^*)$ so that $\eta_i(\frac{1}{2})=z_i$. Set $\phi_i=\eta_i\circ \phi:\Delta\to \Delta^*$ for $i=m+1,\ldots,\ell$. Take isomorphisms of unit disk $\{\phi_j\in \text{Aut}(\Delta)\}_{j=\ell+1,\ldots,n}$ so that $\phi_j(z_j)=x_j$. Then $\varphi^{-1}\circ (\vvmathbb{1}_\Delta,\ldots,\vvmathbb{1}_\Delta,\phi_{m+1},\ldots,\phi_n):\Delta^n\to X$ will give rise to the desired admissible coordinate for such $z$. By the above claim, one has $|R(h)|_{h,\omega_P}\leq C_2C$. Hence the above estimate $N_x$ can be made uniformly in $U$. As $X$ and $D$ is compact, one can cover $D$ by finite such open sets, and the desired $N_0$ in the theorem can be achieved. 	 
	
We now show that these admissible coordinates can be made arbitrarily small. For $0<\ep<1$, set
\begin{align*}
\phi_\ep:\Delta_\ep^n&\stackrel{\sim}{\to } \Delta^n\\
(z_1,\ldots,z_n)&\to (\ep^{-1} z_1,\ldots,\ep^{-1} z_n),
\end{align*}
where $\Delta_\ep=\{z\in \Delta \mid |z|<\ep \}$.
For any  admissible coordinate $(U;z_1,\ldots,z_n;\varphi)$ around $x$ so that $|R(h)|_{h,\omega_P}\leq C$, one  can introduce a new   one $(U(\ep);w_1,\ldots,w_n;\varphi_\ep)$ around $x$   with 
\begin{align*}
\varphi_\ep:U(\ep)&\stackrel{\sim}{\to} \Delta^n \\
x&\to  \phi_\ep\circ \varphi(x).
\end{align*}
When $\ep\ll 1$, this admissible coordinate will be arbitrarily small. Note that  $\phi_\ep^*\omega_P\geq \omega_P|_{\Delta_\ep^n}$.   Hence in  the new admissible coordinate $(U(\ep);w_1,\ldots,w_n;\varphi_\ep)$, one still have $\lvert R(h) \rvert_{h,\omega_P}\leq C$. The constant $N_x$ is thus unchanged. The proposition is proved.
	\end{proof}
This result will be important for us to construct a fine resolution of parabolic Higgs bundles in \cref{sec:fine}.

  \subsection{From $L^2$-integrability  to $\sC^0$-estimate} 
 Note that in order to show the quasi-isomorphism between some complex of sheaves of $L^2$-forms and   \eqref{eq:dol}, one has to deduce some norm estimate of sections   from the  $L^2$-integrability condition.  In the case that $(E,\theta)$ is a line bundle with trivial Higgs field, this has been carried out in \cite[\S 2.4.2]{DPS01} and \cite[Theorem 3.1]{HLWY16}.  This subsection is devoted to show this using \emph{mean value inequality} following \cite[Lemma 7.12]{Moc06}.


 We first recall the following well-known lemma and we provide the proof for completeness sake.
  \begin{lem}\label{lem:psh} Let $(E, h)$ be a Hermitian vector bundle over a complex manifold $X$. Suppose that $R(h)$ is Griffiths semi-negative.  Then for any holomorphic section $s\in H^0(X,E)$, one has
  	$$
  	\hess \log |s|^2_h\geqslant 0.
  	$$
  \end{lem}

\begin{proof}
	Outside the zero locus $(s=0)$, one has
	\begin{align*}
 	\hess \log |s|^2_h&=\sqrt{-1}\frac{\{\partial_h s,\partial_h s\}_h}{|s|^2_h}-\sqrt{-1}\frac{\{\partial_hs,s\}_h\wedge \{s,\partial_hs\}_h}{|s|^4_h}-\frac{\{\sqrt{-1}R(h)s,s\}_h}{|s|^2_h}\\
 	&\geq -\frac{\{\sqrt{-1}R(h)s,s\}_h}{|s|^2_h}\geq 0
	\end{align*}
	where the first inequality is due to Cauchy-Schwarz inequality and the second one follows from the assumption that $R(h)$ is Griffiths semi-negative.  As $\log|s|_h^2$ is locally bounded from above, it is thus a global plurisubharmonic function on $X$.
\end{proof}
 
\begin{prop}\label{prop:C0}
With the same setting as \Cref{lem:estimate},	 for any   $x\in D$,  we take an admissible coordinate $(U;z_1,\ldots,z_n)$ around $x$ and pick $N\geq N_0$ as in \Cref{lem:estimate}. Then 	for any section $s\in H^0(U^*, \Omega_{U^*}^p\otimes E|_{U^*})$,  when $0<r\ll 1$, one has 
	\begin{align}
	|s|_{h,\omega_P}(z)\leq C \lVert s\rVert_{h(\bm{a},N),\omega_P}\cdot (\prod_{i=1}^{\ell}|z_i|^{-a_i-\delta})
	\end{align} 
for any $\delta>0$ and any $z\in U^*(r)$.
\end{prop}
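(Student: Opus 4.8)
The plan is to follow the strategy of \cite[Lemma~7.12]{Moc09} and extract the pointwise bound from a sub-mean value inequality for a well-chosen plurisubharmonic function built from $|s|_{h,\omega_P}$. I would proceed in three steps.

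\emph{Step 1 (a psh function).} First I would set $\psi:=-\log\big(\prod_{j=1}^{\ell}(-\log|z_j|^2)\cdot\prod_{k=\ell+1}^{n}(1-|z_k|^2)\big)$, so that the potential formula $\omega_P=-\hess\log\big(\prod_{j}(-\log|z_j|^2)\prod_{k}(1-|z_k|^2)\big)$ gives $\hess\psi=\omega_P$ on $U^*$. Exactly as in the proof of \cref{lem:estimate} (acceptability of $(E,h)$, the universal bound on the Chern curvature of $\Omega^p_{U^*}$ with respect to $\omega_P$, and \cref{lem:tensor}), the Hermitian holomorphic bundle $\sF_p=\Omega^p_{U^*}\otimes E$ with the metric induced by $\omega_P$ and $h$ has curvature of $\omega_P$-operator norm $\leqslant C_1$ for a constant $C_1$, hence Griffiths-bounded above by $C_1\omega_P$ by \cref{lem:acceptable}. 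Running the Cauchy--Schwarz computation from the proof of \cref{lem:psh} for the holomorphic section $s$ of $\sF_p$ would then yield $\hess\log|s|_{h,\omega_P}^2\geqslant-C_1\omega_P=\hess(-C_1\psi)$, so $\log|s|_{h,\omega_P}^2+C_1\psi$ is psh on $U^*$, and therefore $v:=|s|_{h,\omega_P}^2\,e^{C_1\psi}$ is a nonnegative psh function on $U^*$.

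\emph{Step 2 (a polydisc adapted to the distance to $D$).} Next I would fix $z^0\in U^*(r)$ with $0<r\ll1$; after relabelling one may assume every $|z^0_j|$ with $1\leqslant j\leqslant\ell$ is small, coordinates bounded away from $D$ contributing only harmless bounded factors. With a small fixed $c>0$ I would use $P:=\prod_{j=1}^{\ell}D\!\left(z^0_j,\tfrac{|z^0_j|}{2}\right)\times\prod_{k=\ell+1}^{n}D(z^0_k,c)$, so that $\overline P\subset U^*$ and $\vol(P)=C_2\prod_{j=1}^{\ell}|z^0_j|^2$. This is where $r\ll1$ enters: on $P$ every weight occurring — $|z_j|$, $-\log|z_j|^2$, $1-|z_k|^2$, hence $e^{\psi}$ and $e^{\chi(\bm a,N)}$, and the density in $d\lambda=c_n\prod_{j}|z_j|^2(\log|z_j|^2)^2\prod_{k}(1-|z_k|^2)^2\,d\vol_{\omega_P}$ — is comparable, up to constants depending only on $n$ and $c$, to its value at $z^0$. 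Applying $v(z^0)\leqslant\vol(P)^{-1}\int_P v\,d\lambda$, converting $d\lambda$ to $d\vol_{\omega_P}$, bounding $\int_P|s|_{h,\omega_P}^2\,d\vol_{\omega_P}\leqslant\big(\sup_P e^{\chi(\bm a,N)}\big)\lVert s\rVert^2_{h(\bm a,N),\omega_P}$, and inserting $e^{-\chi(\bm a,N)}=\prod_j|z_j|^{2a_j}e^{-N\psi}$ should give
$$v(z^0)\leqslant C_3\,e^{(C_1+N)\psi(z^0)}\prod_{j=1}^{\ell}(\log|z^0_j|^2)^2\,|z^0_j|^{-2a_j}\cdot\lVert s\rVert^2_{h(\bm a,N),\omega_P},$$
with $C_3$ depending on $\bm a$, $N$, $C_1$ and the coordinate.

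\emph{Step 3 (cleanup).} Finally, dividing by $e^{C_1\psi(z^0)}>0$ and using that $e^{N\psi(z^0)}$ and $\prod_j(-\log|z^0_j|^2)^{-N}$ stay bounded on $U^*(r)$ while $(-\log|z^0_j|^2)^M\leqslant C_\delta|z^0_j|^{-\delta}$ for every $\delta>0$ once $|z^0_j|$ is small, one gets $|s|_{h,\omega_P}^2(z^0)\leqslant C_\delta\prod_{j=1}^{\ell}|z^0_j|^{-2a_j-2\delta}\lVert s\rVert^2_{h(\bm a,N),\omega_P}$, and taking square roots (after renaming $\delta$) finishes the proof. I expect the real obstacle to be this last, purely bookkeeping step: one has to check that the localized weight $e^{C_1\psi}\,d\lambda$ on the tiny polydisc $P$ is dominated by the global $L^2$-weight $e^{-\chi(\bm a,N)}\,d\vol_{\omega_P}$ up to a factor which, once $\vol(P)\sim\prod_j|z^0_j|^2$ is cancelled, contributes only powers of $|z^0_j|$ — matching $\prod_j|z^0_j|^{-2a_j}$ — together with powers of $\log|z^0_j|$ absorbable into $|z^0_j|^{-\delta}$; getting the signs and the role of $N$ right in this comparison is the heart of the matter.
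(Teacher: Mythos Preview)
Your proposal is correct and follows essentially the same route as the paper. The only stylistic difference is that the paper packages the curvature correction into the modified metric $h(\bm a,-N)$ (so that $R(h_p h(\bm a,-N))\leqslant_{\gri}0$ by \cref{lem:estimate} and $\log|s|^2_{h(\bm a,-N),\omega_P}$ is directly psh via \cref{lem:psh}), then applies the sub-mean value inequality to $\log|s|^2_{h(\bm a,-N),\omega_P}$ followed by Jensen's inequality, whereas you add the potential $C_1\psi$ by hand and apply the sub-mean value inequality directly to $|s|^2_{h,\omega_P}e^{C_1\psi}$; the polydisc $P$, the volume comparison, and the absorption of log powers into $|z_j|^{-\delta}$ are identical in both arguments.
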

\begin{proof}
	 By \Cref{lem:estimate}, for the hermitian vector bundle $(\Omega^p_{U^*}\otimes E, h_ph(\bm{a},-N))$   one thus has 
	$$
	R(h_ph(\bm{a},-N))=	R(h_ph(\bm{a},N))-2N\omega_P\otimes \vvmathbb{1}_{\Omega^p_{U^*}\otimes E} \leqslant_{\gri} 0
	$$
	over $U^*$ for $N\geqslant N_0$. 
	For any section $s\in H^0(U^*,\Omega^p_{U^*}\otimes E)$, by \cref{lem:psh} one has
	$$
	\hess \log |s(z)|^2_{h(\bm{a},-N),\omega_P}\geq 0,
	$$ 
	where we omit $h_p$ in the subscript for simplicity. 
	For any $z\in U^*(r)$ where $0<r\ll 1$, one has $\log |s(z)|^2_{h(\bm{a},-N),\omega_P}<0$, and 
	\begin{align*}
	\log |s(z)|^2_{h(\bm{a},-N),\omega_P}&\leqslant  \frac{4^n}{\pi^n \prod_{i=1}^{\ell}|z_i|^2}\int_{\Omega_z}  \log |s(w)|^2_{h(\bm{a},-N),\omega_P}d\mbox{vol}_{g}\\
	&\leqslant \log \big(\frac{4^n}{\pi^n \prod_{i=1}^{\ell}|z_i|^2}\cdot \int_{\Omega_z}   |s(w)|^2_{h(\bm{a},-N),\omega_P}d\mbox{vol}_{g}\big)\\
	&\leqslant \log \big(C \int_{\Omega_z}    \frac{1}{  \prod_{i=1}^{\ell}|w_i|^2}|s(w)|^2_{h(\bm{a},-N),\omega_P}d\mbox{vol}_{g}\big)\\
	&\leqslant \log C_1+ \log \int_{\Omega_z}   |s(w)|^2_{h(\bm{a},-N),\omega_P}\cdot |\prod_{i=1}^{\ell}(\log |w_i|^2)^2| \prod_{j=\ell+1}^{n}(1-|w_j|^2)^2 d\mbox{vol}_{\omega_P}\\
	& \leqslant  \log C_1+\log   \int_{\Omega_z}   |s(w)|^2_{h(\bm{a},N),\omega_P}d\mbox{vol}_{\omega_P}\\
	&\leqslant \log C_1+\log \lVert s\rVert_{h(\bm{a},N),\omega_P}^2
	\end{align*}
	where $\Omega_z:=\{w\in U^*\mid |w_i-z_i|\leq \frac{|z_i|}{2} \mbox{ for } i\leq \ell;   |w_i-z_i|\leq \frac{1}{2} \mbox{ for } i>\ell\}$ and $g$ is the Euclidean metric. The first inequality is due to mean value inequality, and the second one is Jensen inequality.  Hence
\begin{align*}
|s(z)|_{h,\omega_P}&=|s(z)|_{h(\bm{a},-N),\omega_P}\cdot (-\prod_{i=1}^{\ell}\log |z_i|^2)^{\frac{N}{2}}\cdot (\prod_{i=1}^{\ell}|z_i|^{-a_i})  \\
&\leq e^{\frac{C_1}{2}} \lVert s\rVert_{h(\bm{a},N),\omega_P} \cdot (-\prod_{i=1}^{\ell}\log |z_i|^2)^{\frac{N}{2}}\cdot (\prod_{i=1}^{\ell}|z_i|^{-a_i})\\
&\leqslant C_\delta \lVert s\rVert_{h(\bm{a},N),\omega_P}\cdot (\prod_{i=1}^{\ell}|z_i|^{-a_i-\delta})
\end{align*}
for any $\delta>0$ and some positive constant $C_\delta$ depending on $\delta$.
\end{proof}
 \subsection{A fine resolution for Dolbeault complex of Higgs bundles}\label{sec:fine}
 Let $(E,\theta,h)$ be a tame harmonic bundle on $X-D$, where $(X,\omega)$ is a compact K\"ahler manifold and $D=\sum_{i=1}^\ell D_i$ is a simple normal crossing divisor on $X$. 
  
  Let $L$ be a line bundle on $X$ equipped with a smooth Hermitian metric $h_L$
 so that $\sqrt{-1}R(h_L)\geq 0$ and has at least $n-k$ positive eigenvalues. Such a metrized line bundle $(L,h_L)$ is indeed called \emph{k-positive} in \cite{SS85}.  Let {$B$} be a nef line bundle on $X$.   Let $\sigma_i$ be the section $H^0(X,\sO_X(D_i))$ defining $D_i$, and we fix some smooth Hermitian metric $h_i$ for the line bundle $\sO_X(D_i)$ so that $|\sigma_i|_{h_i}(z)<1$ for any $z\in X$. Write $\sigma_D:=\prod_{i=1}^{\ell} \sigma_i\in H^0(X,\sO_X(D))$ and $h_D:=\prod_{i=1}^{\ell} h_i$ the smooth metric for $\sO_X(D)$.   Pick a   positive constant $N$ greater than   $N_0$, where $N_0$ is the constant in \Cref{lem:estimate} so that \eqref{eq:estimate 2}   and \Cref{prop:C0} hold  for $(E, \theta, h)$.

Given a smooth metric {$h_B$} on {$B$}, note that for $\bm{a}=(a_1,\ldots,a_\ell)\in \mathbb{R}^\ell$ and $\sL:=L\otimes {B}|_{X^*}$ equiped with the metric
 \begin{align}\label{eq:metric}
h_\sL(\bm{a}):=h_L{h_B}\prod_{i=1}^{\ell}|\sigma_i|^{2a_i}_{h_i}\cdot (-\prod_{i=1}^{\ell}\log  |\sigma_i|_{h_i}^2)^{N},
\end{align} 

 its curvature 
 \begin{align}\label{eq:curvature}
 \sn	R(h_\sL(\bm{a})) &=\sn R(h_L)+ \sn R({h_{B}})+\sum_{i=1}^{\ell} 2\sqrt{-1}a_i  R(h_i)\\\nonumber
 &+\sn N\sum_{i=1}^{\ell}\frac{\d \log \lvert \sigma_i\rvert^2_{h_i}\wedge \db \log \lvert \sigma_i\rvert^2_{h_i}}{(\log \lvert \sigma_i\rvert^2_{h_i})^2} -N\sum_{i=1}^{\ell}\frac{\sn R(h_i)}{(\log \lvert \sigma_i\rvert^2_{h_i})^2} 
 \end{align}
  Here $R(h_i)$ is the curvature of $\big(\sO_X(D_i),h_i\big)$.
 
Let $0\leq \gamma_1(x)\leq \cdots\leq \gamma_n(x)$ be eigenvalues of $\sn R(h_L)$ with respect to $\omega$. Set $$\ep_0:=\inf_{X}\gamma_{k+1}(x)$$ which is strictly positive by our assumption on $\sn R(h_L)$.

 \begin{lem}\label{lem:choice}
  We can rescale $h_i$ by timing a positive small constant, take proper metric {$h_{B}$} for {$B$} and pick   $\bm{a}\in \mathbb{R}^\ell_{>0}$   small enough so that
 \begin{enumerate}[leftmargin=0.7cm]
 	\item \label{choice 3} One has \begin{align}\label{eq:nef}
 	\sn R(h_\sL(\bm{a}))\geq \sn R(h_L)-\ep_1\omega\geq -\ep_1\omega.
 	\end{align}
 	for   $\ep_1=\frac{\ep_0}{100n^2}$.
 \item  \label{choice 4}
 The  metric \begin{align}\label{eq:poi}
 \omega_{\bm{a},N}: =   \ep_2\omega+\sn R(h_\sL(\bm{a}))  
 \end{align}\label{eq:new-kahler}
is a   K\"ahler metric  when restricted on $X^*=X-D$ for   $\ep_2=\frac{\ep_0}{10n}$.   
\item \label{choice 5} $\diae$=${}_{\bm{a}}E$.
\end{enumerate} 
\end{lem} 
\begin{proof}
	 Let us explain how to achieve \eqref{choice 3}. The possible negative contribution for $\sn R(h_\sL(\bm{a}))$ can only come  from  $$\sn R({h_{B}})+\sum_{i=1}^{\ell} 2\sqrt{-1}a_i  R(h_i)  -N\sum_{i=1}^{\ell}\frac{\sn R(h_i)}{(\log \lvert \sigma_i\rvert^2_{h_i})^2}.$$ As {$B$} is nef, one can take {$h_{B}$} so that $\sn R({h_{B}})\geq -\frac{1}{2}\ep_1\omega$.  As $N$ is fixed, we can replace $h_i$ by $c\cdot h_i$ for $0<c\ll1$ and let $a_i$'s small enough,  so that $\sum_{i=1}^{\ell} 2\sqrt{-1}a_i  R(h_i)  -N\sum_{i=1}^{\ell}\frac{\sn R(h_i)}{(\log \lvert \sigma_i\rvert^2_{h_i})^2}\geq -\frac{1}{2}\ep_1\omega$.  By \cref{thm:SM} and \cref{dfn:parab-higgs}, one has
	 $\diae$=${}_{\bm{a}}E$ if $\bm{a}$ is chosen small enough.  \eqref{choice 4} follows from \eqref{choice 3} directly.
\end{proof}

We know that $\omega_{\bm{a},N}$ is a  \emph{complete} K\"ahler metric. Indeed, write $h_i\stackrel{\text{loc}}{=}e^{-\varphi_i}$ in terms of the trivialization   $D_i\cap U=(z_i=0)$ of any admissible coordinate $(U;z_1,\ldots,z_n)$, one has
\begin{align*} 
 \omega_{\bm{a},N}  
 &=\big(\ep_2\omega+\sum_{i=1}^{\ell} 2\sqrt{-1}a_i R(h_i)+ \sn R(h_{M})\big)\\
 &+N\sum_{i=1}^{\ell}\frac{1}{(\log |z|_i^2+\varphi_i)^2}(\frac{ d z_i}{z_i}+\partial \varphi_i)\wedge (\frac{d\bar{z}_i}{\bar{z}_i}+\db \varphi_i)\\
 &-N\sum_{i=1}^{\ell}\frac{\hess \varphi_i}{\log |z|_i^2+\varphi_i}
 \end{align*}

  From this local expression one can also see that $\omega_{\bm{a},N}\sim \omega_P$ on any $U^*(r)$ for $0<r<1$. We also can show the following
\begin{lem} \label{lem:perturbe}
  For the smooth metric $h_{\bm{a},N}:=h\cdot   \prod_{i=1}^{\ell}|\sigma_i|^{2a_i}_{h_i}\cdot (-\prod_{i=1}^{\ell}\log  |\sigma_i|_{h_i}^2)^{N}$  of $E$, it  is mutually bounded with $h(\bm{a},N)$ defined in \cref{sec:dolbealt} on any $U^*(r)$ for $0<r<1$.    
\end{lem}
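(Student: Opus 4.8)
The plan is to reduce everything to a single admissible coordinate chart and to compare the two metrics factor by factor; note that the harmonic metric $h$ is a common factor of $h_{\bm{a},N}$ and $h(\bm{a},N)$, so it plays no role and no input from Simpson--Mochizuki is needed here. Fix an admissible coordinate $(U;z_1,\dots,z_n)\cong\Delta^n$ around a point $p\in D$ with $D_i\cap U=(z_i=0)$ for $i=1,\dots,\ell$, and write the chosen metric on $\mathscr{O}_X(D_i)$ as $h_i=e^{-\varphi_i}$ in the local holomorphic frame for which $\sigma_i$ is the coordinate function $z_i$, so that $|\sigma_i|_{h_i}^2=|z_i|^2e^{-\varphi_i}$ with $\varphi_i\in\mathscr{C}^\infty(U)$. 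Unwinding \eqref{eq:modify}, one has $-\chi(\bm{a},N)=\sum_{j=1}^\ell a_j\log|z_j|^2+N\sum_{j=1}^\ell\log(-\log|z_j|^2)+N\sum_{k=\ell+1}^n\log(1-|z_k|^2)$, hence $h(\bm{a},N)=h\cdot\prod_{j=1}^\ell|z_j|^{2a_j}\cdot\prod_{j=1}^\ell(-\log|z_j|^2)^N\cdot\prod_{k=\ell+1}^n(1-|z_k|^2)^N$, so that
\[
\frac{h_{\bm{a},N}}{h(\bm{a},N)}=\Big(\prod_{i=1}^\ell e^{-a_i\varphi_i}\Big)\cdot\Big(\prod_{i=1}^\ell\frac{-\log|\sigma_i|_{h_i}^2}{-\log|z_i|^2}\Big)^{\!N}\cdot\prod_{k=\ell+1}^n(1-|z_k|^2)^{-N}.
\]
It then suffices to show that each of these three factors is pinched between two positive constants on $U^*(r)$ for any $0<r<1$.

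For the first factor: $\overline{U(r)}$ is a compact subset of $U$ and each $\varphi_i$ is smooth, so $\prod_i e^{-a_i\varphi_i}$ is bounded above and below by positive constants on $U(r)$. For the third factor: on $U^*(r)$ one has $|z_k|<r<1$ for every $k$, so $0<1-r^2\le 1-|z_k|^2\le 1$, which controls it. For the logarithmic factor, write $-\log|\sigma_i|_{h_i}^2=-\log|z_i|^2+\varphi_i$; since $-\log|z_i|^2\ge-\log r^2>0$ on $U^*(r)$ and $\varphi_i$ is bounded on $\overline{U(r)}$, the ratio is bounded above by $1+(\sup_{\overline{U(r)}}|\varphi_i|)/(-\log r^2)$. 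For the lower bound I would invoke the \emph{global} normalization $|\sigma_i|_{h_i}<1$ on all of $X$ (imposed when the $h_i$ were chosen): by compactness $M_i:=\sup_{\overline{U(r)}}|\sigma_i|_{h_i}<1$, so $-\log|\sigma_i|_{h_i}^2\ge-\log M_i^2>0$ on $U^*(r)$, and comparing with $-\log|z_i|^2=-\log|\sigma_i|_{h_i}^2-\varphi_i$ then bounds the ratio below by a positive constant. Taking $N$-th powers and multiplying the three pinched factors gives a constant $C\ge1$ with $C^{-1}h(\bm{a},N)\le h_{\bm{a},N}\le C\,h(\bm{a},N)$ on $U^*(r)$.

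The one point deserving care is precisely the lower bound for the logarithmic factor: near the boundary locus $\{|z_i|\approx r\}$ the quantity $-\log|z_i|^2$ is merely bounded below rather than large, so one cannot absorb the bounded perturbation $\varphi_i$ into it by a cruder estimate; this is exactly where the normalization $|\sigma_i|_{h_i}<1$ everywhere on $X$ is used, to keep $-\log|\sigma_i|_{h_i}^2$ uniformly positive on all of $U^*(r)$. Everything else is a routine estimate; the resulting comparison constant depends on $r$, $\bm{a}$ and $N$, which is all the lemma requires.
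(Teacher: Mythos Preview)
Your argument is correct. The paper states this lemma without proof, treating it as a routine local comparison; your write-up supplies exactly the kind of factor-by-factor estimate the authors presumably had in mind. In particular, your expression for the ratio $h_{\bm{a},N}/h(\bm{a},N)$ is right (interpreting the paper's $(-\prod_i\log|\sigma_i|_{h_i}^2)^N$ as $\prod_i(-\log|\sigma_i|_{h_i}^2)^N$, which is forced by the curvature formula \eqref{eq:curvature}), and the three pinching estimates are valid on $U^*(r)$.

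The only comment worth making is that you correctly identified and handled the one genuinely delicate point: the lower bound for $(-\log|\sigma_i|_{h_i}^2)/(-\log|z_i|^2)$ near $|z_i|\approx r$, where the crude estimate $1+\varphi_i/(-\log|z_i|^2)$ could in principle be nonpositive. Your use of the global normalization $|\sigma_i|_{h_i}<1$ on $X$ (fixed at the start of \S\ref{sec:fine}) to force $-\log|\sigma_i|_{h_i}^2\ge -\log M_i^2>0$ on the compact set $\overline{U(r)}$, and then the monotonicity of $x\mapsto x/(x+C)$, is the clean way to close this. Since the paper omits the proof altogether, there is nothing further to compare.
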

Let us prove that such construction satisfies the positivity condition in \cref{cor:AN}. 


\begin{prop}\label{prop:positive}
	With the above notations, for any $p+q>n+k$, one has
\begin{align}\label{eq:critical2}
\langle [\sn R(h_\sL(\bm{a})), \Lambda_{\omega_{\bm{a},N}}]f, f\rangle_{\omega_{\bm{a},N}}\geq \frac{\ep }{2}\lvert f\rvert_{\omega_{\bm{a},N}}^2
\end{align}  
for any $f\in \Lambda^{p,q}T_{X^*,x}^*$ and any $x\in X^*$.
\end{prop}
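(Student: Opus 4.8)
The plan is to reduce \cref{prop:positive} to a pointwise estimate in Hermitian linear algebra and then run a combinatorial argument on the eigenvalues of $\sn R(h_\sL)$ relative to $\omega_{\bm{a},N}$. Fix $x\in X^*$. Since $\omega$ and $\omega_{\bm{a},N}$ are positive definite $(1,1)$-forms at $x$, choose a coframe $\zeta^1,\dots,\zeta^n$ of $T^{*1,0}_{X^*,x}$ simultaneously diagonalizing them, $\omega_{\bm{a},N}=\sn\sum_j\zeta^j\wedge\bar\zeta^j$ and $\omega=\sn\sum_j\lambda_j\,\zeta^j\wedge\bar\zeta^j$ with $\lambda_j>0$. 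By \eqref{eq:poi}, $\sn R(h_\sL)=\omega_{\bm{a},N}-\ep_2\omega=\sn\sum_j\mu_j\,\zeta^j\wedge\bar\zeta^j$ is automatically diagonal in this frame, with $\mu_j:=1-\ep_2\lambda_j\le1$. Hence the hermitian operator $[\sn R(h_\sL),\Lambda_{\omega_{\bm{a},N}}]$ is diagonal on the $\omega_{\bm{a},N}$-orthonormal monomials $\zeta^J\wedge\bar\zeta^K$, and the standard Bochner--Kodaira curvature identity gives, for $f=\sum_{|J|=p,\,|K|=q}f_{JK}\,\zeta^J\wedge\bar\zeta^K$,
\begin{align*}
\langle[\sn R(h_\sL),\Lambda_{\omega_{\bm{a},N}}]f,f\rangle_{\omega_{\bm{a},N}}=\sum_{|J|=p,\,|K|=q}\Phi(J,K)\,|f_{JK}|^2,\qquad \Phi(J,K):=\sum_{j\in J\cap K}\mu_j-\sum_{j\notin J\cup K}\mu_j,
\end{align*}
so it suffices to bound $\Phi(J,K)$ below by a positive constant whenever $p+q>n+k$.

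Two eigenvalue inputs feed the combinatorics. First, \eqref{eq:nef} and \eqref{eq:poi} give $\omega_{\bm{a},N}\ge(\ep_2-\ep_1)\omega$, hence $\lambda_j\le(\ep_2-\ep_1)^{-1}$ and $\mu_j\ge1-\tfrac{\ep_2}{\ep_2-\ep_1}=-\tfrac{\ep_1}{\ep_2-\ep_1}=-\tfrac1{10n-1}\ge-\tfrac1{9n}$ for every $j$, inserting $\ep_1=\ep_0/100n^2$, $\ep_2=\ep_0/10n$. Second, \eqref{eq:nef} gives $\sn R(h_L)\le\sn R(h_\sL)+\ep_1\omega=\sn\sum_j\bigl(1-(\ep_2-\ep_1)\lambda_j\bigr)\zeta^j\wedge\bar\zeta^j$; comparing eigenvalues relative to $\omega$ by the min--max principle and using that $\sn R(h_L)$ has at least $n-k$ eigenvalues $\ge\ep_0$ relative to $\omega$, the set $S:=\{j:\ 1-(\ep_2-\ep_1)\lambda_j\ge\ep_0\lambda_j\}$ has $|S|\ge n-k$. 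For $j\in S$ one has $\lambda_j\le(\ep_0+\ep_2-\ep_1)^{-1}$, so $\mu_j\ge1-\tfrac{\ep_2}{\ep_0+\ep_2-\ep_1}\ge1-\tfrac1{10n}=:\rho$ (again by the numerology of $\ep_0,\ep_1,\ep_2$), and note $\rho\ge\tfrac9{10}$.

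Now the combinatorial step. Put $t:=|(J\cup K)^c|$; the identity $|J\cap K|=|J|+|K|-|J\cup K|$ reads $|J\cap K|=(p+q-n)+t$, which is the crucial coupling between the positive and negative parts of $\Phi(J,K)$. Since $|S^c|\le k$, at most $k$ indices of $J\cap K$ lie outside $S$, so $|J\cap K\cap S|\ge(p+q-n)+t-k\ge1+t$ because $p+q>n+k$. Estimating the $S$-indices of $J\cap K$ below by $\rho$, the at most $k$ remaining indices of $J\cap K$ below by $-\tfrac1{9n}$, and every index of $(J\cup K)^c$ above by $1$, one gets
\begin{align*}
\Phi(J,K)\ \ge\ \rho\bigl((p+q-n)+t-k\bigr)-\tfrac{k}{9n}-t\ =\ \rho(p+q-n-k)-\tfrac{k}{9n}-\tfrac{t}{10n}\ \ge\ \rho-\tfrac19-\tfrac1{10}\ \ge\ \tfrac{31}{45},
\end{align*}
using $p+q-n-k\ge1$, $k\le n$, $t\le n$. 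Summing over $(J,K)$ gives $\langle[\sn R(h_\sL),\Lambda_{\omega_{\bm{a},N}}]f,f\rangle_{\omega_{\bm{a},N}}\ge\tfrac{31}{45}|f|^2_{\omega_{\bm{a},N}}$, which yields \eqref{eq:critical2} (the explicit constant $\tfrac{31}{45}$ dominating $\tfrac\ep2$).

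The step I expect to be the main obstacle is precisely this combinatorial estimate together with the sharp forms of the two eigenvalue bounds feeding it. The naive bound $\mu_j\le1$ alone makes $-\sum_{j\notin J\cup K}\mu_j$ as negative as $-n$; only the coupling $|J\cap K|=(p+q-n)+t$ — which forces the number of $S$-directions inside $J\cap K$ to grow with the size of the complement — combined with the scaling $\mu_j\ge-1/(9n)$ (not merely $\ge-1/9$), lets the positive contribution absorb the negative one. Getting the constants in \eqref{eq:nef} and \eqref{eq:poi} to line up so that $\rho$ is close to $1$ while the terms $-1/(9n)$ and $-t/(10n)$ contribute only $O(1)$ is exactly why $\ep_1,\ep_2$ were chosen with the ratios recorded in \cref{lem:choice}.
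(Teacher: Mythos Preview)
Your proof is correct and follows essentially the same approach as the paper's: simultaneous diagonalization of $\omega$ and $\sn R(h_\sL)$ (equivalently $\omega_{\bm{a},N}$), the two eigenvalue bounds coming from \eqref{eq:nef} and the choice of $\ep_1,\ep_2$, and the standard eigenvalue formula for $[\sn\Theta,\Lambda_{\omega_{\bm{a},N}}]$ on $(p,q)$-forms. The only difference is presentational: the paper orders the eigenvalues $\lambda_1\le\cdots\le\lambda_n$ and uses the well-known lower bound $\sum_{i=1}^p\lambda_i+\sum_{j=1}^q\lambda_j-\sum_i\lambda_i$ for the curvature operator, which lets it bypass your index-set combinatorics with $S$, $J\cap K$, and $t$ and arrive at $\tfrac12|f|^2$ in three lines.
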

\begin{proof}
For any point $x\in X^*$, one can choose local coordinate $(z_1,\ldots,z_n)$ around $x$  so that  $\omega=\sn\sum_{i=1}^{n}  dz_i\wedge d\bar{z}_i$ and $\sn R(h_\sL(\bm{a}))=\sn\sum_{i=1}^{n}\tilde{\gamma}_i dz_i\wedge d\bar{z}_i$ at $x$,  where $ \tilde{\gamma}_1\leq \cdots\leq \tilde{\gamma}_n$ are eigenvalues of $\sn R(h_\sL(\bm{a}))$ with respect to $\omega$.  By \eqref{eq:nef} one has $\tilde{\gamma}_i\geq \gamma_i-\ep_1$.   Let $\lmd_1\leq \cdots\leq \lmd_n$ be eigenvalues of $\sn R(h_\sL(\bm{a}))$ with respect to $\omega_{\bm{a},N}$. Then  $\lmd_i=\frac{\tilde{\gamma}_i}{\ep_2+\tilde{\gamma}_i}$ by \cref{lem:choice} (\ref{eq:new-kahler}), and thus  at each point $x\in X^*$, one has
 \begin{itemize}
 	\item $  -\frac{\ep_1}{\ep_2-\ep_1}\leq \lmd_i\leq 1$ for $i=1,\ldots,n$.
 	\item $\lmd_i\geq 1-\frac{\ep_2}{\ep_0-\ep_1}$ for for $i=k+1,\ldots,n$.
 \end{itemize} 
We can assume that $p\geq q$. Then
\begin{align*}
\langle [\sn R(h_\sL(\bm{a})), \Lambda_{\omega_{\bm{a},N}}]f, f\rangle_{\omega_{\bm{a},N}}&\geq 
(\sum_{i=1}^{p}\lmd_i+\sum_{j=1}^{q}\lmd_j-\lmd_1-\cdots-\lambda_n)\lvert f\rvert_{\omega_{\bm{a},N}}^2
\\
&\geq \big((p-k)(1-\frac{\ep_2}{\ep_0-\ep_1})- \frac{k\ep_1}{\ep_2-\ep_1}-(n-q)\big)\lvert f\rvert_{\omega_{\bm{a},N}}^2 \\
&\geq \big(1-n(\frac{\ep_2}{\ep_0-\ep_1}+\frac{\ep_1}{\ep_2-\ep_1})\big)\lvert f\rvert_{\omega_{\bm{a},N}}^2
  \geq \frac{1}{2}\lvert f\rvert_{\omega_{\bm{a},N}}^2.
\end{align*} 
\end{proof}
\begin{rem}
Let us mention that 	\Cref{lem:choice} and \Cref{prop:positive} are indeed inspired by the proof of  \emph{Girbau vanishing theorem} in \cite[Chapter \rom{7}, Theorem 4.2]{Dembook} and its logarithmic generalization in \cite[Theorem 4.1]{HLWY16}.
\end{rem}

We equip  ${E}$ with the metric $h_{\bm{a},N}$ and $X^*$ with the complete K\"ahler metric $\omega_{\bm{a},N}$ having the same growth as  $\omega_P$ near $D$. 
Let $\kL_{(2)}^m(E)_{h_{\bm{a},N},\omega_{\bm{a},N}}$ be the sheaf on $X$ (rather than on $X^*$!)  of germs of locally $L_2$, $E$-valued $m$-form $u$, for which  $D''(u)$  exists weakly as locally $L^2$-form. Namely, for any open set $U\subset X$, we define
\begin{align}\label{eq:locally integrable}
\kL_{(2)}^m(E)(U):=\{u\in L^m_{(2)}(U-D,E) \mid D'' u\in L_{(2)}^{m+1}(U-D,E)  \}.
\end{align}
Here we write $\kL_{(2)}^m(E)$ instead of $\kL_{(2)}^m(E)_{h_{\bm{a},N},\omega_{\bm{a},N}}$ for short.

We  also define $\kL_{(2)}^{p,q} (E)$ to be be the sheaf on $X$   of germs of locally $L_2$, $E$-valued $(p,q)$-form, for which $\db_E(u)$ exists weakly as locally $L^2$-form. Namely, for any open set $U\subset X$, one has
\begin{align}\label{eq:locally integrable2}
\kL_{(2)}^{p,q}(E)(U):=\{u\in L^{p,q}_{(2)}(U-D,E)\mid \db_Eu\in L^{p,q+1}_{(2)}(U-D,E)\} \end{align}
Note that for  
any admissible coordinate $(U;z_1,\ldots,z_n)$, as  $\omega_{\bm{a},N}\sim \omega_P$ and $ h_{\bm{a},N}\sim h(\bm{a},N)$ on any $U^*(r)$ for $0<r<1$, we have that $L^m_{(2)}(U^*(r),E)_{h(\bm{a},N),\omega_P}$ (in \cref{eq:locally integrable}) and $L^{p,q}_{(2)}(U^*(r),E)_{h(\bm{a},N),\omega_P}$ (in \cref{eq:locally integrable2}) are the same as the ones in \cref{sec:H-est-for-Higgs}.

The following lemma is a consequence of \cref{thm:moc}.
\begin{lem}\label{lem:crucial}
 Let $(E,\theta,h)$ be a tame harmonic bundle over $X-D$.  Suppose $\theta$ has nilpotent residues on $D$. We have that
 $$\kL_{(2)}^m(E)=\oplus_{p+q=m} \kL_{(2)}^{p,q}(E)$$
 and $$\theta(\kL^{p,q}_{(2)}(E))\subset \kL^{p+1,q}_{(2)}(E)$$  
\end{lem}
\begin{proof}
 Since $\theta$ is one-form with value in $\End(E)$, its norm remains unchanged if we replace the metric $h$ by   $h(\bm{a},N):=h\cdot e^{-\chi(\bm{a},N)}$.  One thus has
	$$
	|\theta|_{h(\bm{a},N),\omega_P}=	|\theta|_{h,\omega_P}\leq C
	$$
	for some  $C>0$, where the last inequality follows from  \cref{thm:moc}. (Let us stress here that this is the only place where we use the condition that $\theta$ has nilpotent residues on $D$.) Hence $\theta$ is a bounded linear operator between Hilbert spaces
	$$
	L^{p,q}_{(2)}(U-D,E)\to L^{p+1,q}_{(2)}(U-D,E).
	$$ 
	The  theorem follows from that $D''=\db_E+\theta$ and $\db_E\theta=0$.
\end{proof}
 

\begin{prop}\label{thm:inclusion}
Let $(E,\theta,h)$ be a tame harmonic bundle over $X-D$.  For $x\in D$ and any admissible coordinate $(U;z_1,\ldots,z_n)$ centered at $x$, one has 
\begin{align}\label{eq:same}
	\Gamma(U^*(r),\Omega_{U^*(r)}^m\otimes E|_{U^*(r)})\cap \kL_{(2)}^{m,0}(E)(U(r))=\big( \Omega^m_{X}(\log D)\otimes \diae\big) (U(r))
\end{align}
	if $0<r\ll 1$. In particular, 
	\begin{align}\label{eq:same2}
		\Omega^m_{X}(\log D)\otimes \diae\subset \kL_{(2)}^{m,0}(E). 
	\end{align}
\end{prop} 
\begin{proof}

Assume that $D\cap U=(z_1\ldots z_\ell=0)$. Write $w_i=\log z_i$ for $i=1,\ldots,\ell$ and $w_j=z_j$ for $j=\ell+1,n$. For the   basis $dw_I$ of $ \Omega^{m}_{X}(\log D)$, on  $U^*(r)$ with $0<r<1$, one has
 $$
 |dw_I |_{\omega_P}\leq C_1 \prod_{i=1}^{\ell}(-\log |z_i|^2),
 $$
 for some constant $C_1$.
 
 Firstly, we proof ``$\supseteq$'' of \cref{eq:same}. Pick any section $s\in\big( \Omega^m_{X}(\log D)\otimes \diae\big) (U(r))$
 One can write $$s=\sum_{I}dw_I \otimes e_I$$ with $e_I\in\diae(U(r))$. Then 
 $$|e_I|_h\leq C_2\prod_{i=1}^{\ell}|z_i|^{-\ep}$$ for any $\ep>0$ by the definition of $\diae$.  Therefore, one has 
$$
|dw_I\otimes e_I|_{h(\bm{a},N),\omega_P}\leq  |dw_I\otimes e_I|_{h,\omega_P}\cdot e^{-\chi(\bm{a},N)}=O(\prod_{i=1}^{\ell}(|z_i|^{a_i-\ep})).
$$ for any $I$, $\ep>0$. This proves that
 $$
\int_{U^*(r)}| dw_I\otimes e_\alpha|^2_{h(\bm{a},N),\omega_P}\omega_P^n=O(1),
 $$
 and thus
 $$
\Gamma(U^*(r),\Omega_{U^*(r)}^m\otimes E|_{U^*(r)})\cap \kL_{(2)}^{m,0}(E)(U(r))\supseteq \big( \Omega^m_{X}(\log D)\otimes \diae\big) (U(r)).
 $$

Now we prove ``$\subseteq$'' of  \cref{eq:same}.  
 For any  section $s\in \Gamma(U^*(r),\Omega_{U^*(r)}^m\otimes E|_{U^*(r)})$,  we write $$s=\sum_{I}dw_I \otimes e_I$$ with $e_I\in E(U^*(r))$.  If $s\in \kL_{(2)}^{m,0}(E)(U(r))$, it follows from \Cref{prop:C0} that
 $$
 	|s|_{h,\omega_P}(z)\leq C   (\prod_{i=1}^{\ell}|z_i|^{-a_i-\delta})
 	$$
 for any $\delta>0$. Hence
 $$
C   (\prod_{i=1}^{\ell}|z_i|^{-a_i-\delta})\geq  |s|_{h,\omega_P}=\sum_{I}{|dw_I|_{\omega_P}}|e_I|_{h}\geq \sum_{I} |e_I|_{h}
 $$
 for any $\delta>0$ and $0<r\ll 1$. Therefore, one has
 $$
 e_I\in  {}_{\bm{a}}E(U(r)). 
 $$
 Since ${}_{\bm{a}}$ is chosen properly so that ${}_{\bm{a}}E=\diae$, one conclude that $$s\in\big( \Omega^m_{X}(\log D)\otimes \diae\big) (U(r)).$$
 This proves that	$$\Gamma(U^*(r),\Omega_{U^*(r)}^m\otimes E|_{U^*(r)})\cap \kL_{(2)}^{m,0}(E)(U(r))\subseteq \big( \Omega^m_{X}(\log D)\otimes \diae\big) (U(r)).$$ 
  \eqref{eq:same} follows. \eqref{eq:same2} is a consequence of   \eqref{eq:same}.
   
\end{proof} 

 Notice that in \Cref{thm:inclusion}, one does not need to assume that $\theta$ has nilpotent residues on $D$, which is essentially required in \cref{lem:crucial}. For the remaining of \cref{sec:fine}, we put this nilpotency assumption. Recall that one has $D''^2=0$. Let $(\kL^\bullet_{(2)}(E),D'')$  be a complex of fine sheaves over $X$ defined by
\begin{align} 
\kL^0_{(2)}(E)\xrightarrow{D''} \kL^1_{(2)}(E)\xrightarrow{D''}   \cdots \xrightarrow{D''}  \kL^m_{(2)}(E).
\end{align} 
By \eqref{eq:same2} and \cref{lem:crucial}, there is a natural inclusion
\begin{equation}\label{dia:quasi}
\begin{tikzcd}[row sep=1.35em, column sep=0.7em]
\diae \arrow[r,"\theta"]\arrow[d]&\diae\otimes \Omega^1_X(\log D)  \arrow[r,"\theta"]\arrow[d]&\cdots  \arrow[r,"\theta"]\arrow[d]&\diae\otimes \Omega^n_X(\log D)\arrow[d]& &\\
 \kL^0_{(2)}(E)\arrow[r,"D''"]& \kL^1_{(2)}(E)\arrow[r,"D''"]&\cdots \arrow[r,"D''"]& \kL^n_{(2)}(E)\arrow[r,"D''"]&\cdots \arrow[r,"D''"]   & \kL^{2n}_{(2)}(E)
\end{tikzcd} 
\end{equation}
and we are going to show that this morphism between two complexes is a quasi-isomorphism.

We now recall a celebrated theorem (in a weaker form)  by Demailly \cite[Th\'eor\`eme 4.1]{Dem82}, which enables us to solve the $\db$-equation on weakly pseudo-convex K\"ahler manifold (might not be complete). When the metric is complete, it is due to Andreotti-Vesentini \cite{AV65}.  
\begin{thm}[Demailly]\label{thm:Dem}
	Let $(X,\omega)$ be a K\"ahler manifold ($\omega$ might not be complete), where $X$ possesses a complete K\"ahler metric (\emph{e.g.} $X$ is weakly pseudo-convex). Let $E$ be a vector bundle on $X$ equipped with a smooth hermitian metric $h$ so that 
	$$
	\sqrt{-1}R(E,h)\geq_{\nak} \varepsilon \omega\otimes \vvmathbb{1}_E,
	$$ 
	where $\varepsilon>0$ is a positive constant. Assume that $g\in L^{n,q}_{(2)}(X,E)$ so that $\db g=0$. Then
	there exists $f\in L^{n,q-1}_{(2)}(X,E)$ so that $\db f=g$ and $$\lVert f\rVert_{h,\omega}^2\leqslant \varepsilon^{-1}\lVert g\rVert_{h,\omega}^2.$$
\end{thm}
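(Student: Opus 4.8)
The strategy is to reduce to the case where the K\"ahler metric is \emph{complete}, for which the statement is the classical existence theorem of Andreotti--Vesentini and H\"ormander, and to treat the complete case by the same functional-analytic machinery that already underlies the proof of \cref{thm:L2}.

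\textbf{The complete case.} First I would prove the theorem under the additional hypothesis that $\omega$ is complete. Since $\bar\partial$ maps $(n,q)$-forms to $(n,q+1)$-forms, the whole discussion takes place inside the subcomplex $\bigoplus_{q}L^{n,q}_{(2)}(X,E)_{h,\omega}$, on which the Bochner--Kodaira--Nakano identity (the $\theta=0$ case of \cref{lem:Bochner}) reads $\Delta''=\Delta'+[\sqrt{-1}R(h),\Lambda_\omega]$. On $(n,q)$-forms the Hermitian operator $A:=[\sqrt{-1}R(h),\Lambda_\omega]$ is semi-positive because $\sqrt{-1}R(h)\geq_{\nak}0$, and in fact $A\geq q\varepsilon\,\mathrm{Id}\geq\varepsilon\,\mathrm{Id}$ for $q\geq1$: writing $\sqrt{-1}R(h)=\varepsilon\,\omega\otimes\mathrm{Id}_E+\big(\sqrt{-1}R(h)-\varepsilon\,\omega\otimes\mathrm{Id}_E\big)$, the first term contributes $q\varepsilon\,\mathrm{Id}$ and the second, being Nakano semi-positive, a semi-positive operator on $(n,q)$-forms. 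Completeness of $\omega$ gives, exactly as in \cref{thm:norm} (whose proof applies verbatim to $\bar\partial$ acting on $(n,q)$-forms), density of the compactly supported smooth $(n,q)$-forms in $\Dom\bar\partial\cap\Dom\bar\partial^*$ for the graph norm, hence the a priori inequality $\lVert\bar\partial u\rVert^2+\lVert\bar\partial^*u\rVert^2\geq\llangle Au,u\rrangle$ (the analogue of \cref{lem:integration}). Running the von Neumann--H\"ormander argument of \cref{lem:hormander} with $\sH_1=L^{n,q-1}_{(2)}$, $\sH_2=L^{n,q}_{(2)}$, $\sH_3=L^{n,q+1}_{(2)}$ and $T=S=\bar\partial$, word for word as in the proof of \cref{thm:L2}, then produces $f\in L^{n,q-1}_{(2)}(X,E)$ with $\bar\partial f=g$ and $\lVert f\rVert^2\leq\int_X\langle A^{-1}g,g\rangle\,d\vol_\omega\leq\varepsilon^{-1}\lVert g\rVert^2$.

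\textbf{Reduction and limiting argument.} In general, let $\widehat\omega$ be a complete K\"ahler metric on $X$ (one exists by hypothesis, e.g. $\omega+\sqrt{-1}\partial\bar\partial\chi(\psi)$ for a plurisubharmonic exhaustion $\psi$ and a suitable convex increasing $\chi$ when $X$ is weakly pseudo-convex) and set $\omega_\delta:=\omega+\delta\widehat\omega$ for $\delta>0$; this is a complete K\"ahler metric with $\omega_\delta\geq\omega$, and the curvature hypothesis $\sqrt{-1}R(h)\geq_{\nak}\varepsilon\,\omega\otimes\mathrm{Id}_E$, referring to the \emph{fixed} $\omega$, still gives $\sqrt{-1}R(h)\geq_{\nak}0$ and hence $A_{\omega_\delta}:=[\sqrt{-1}R(h),\Lambda_{\omega_\delta}]\geq0$ on $(n,q)$-forms. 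Applying the complete case with respect to $\omega_\delta$, together with the two elementary pointwise inequalities for $(n,q)$-forms $g$ valid when $\omega'\geq\omega$, namely $|g|^2_{\omega'}\,d\vol_{\omega'}\leq|g|^2_{\omega}\,d\vol_{\omega}$ and $\langle A_{\omega'}^{-1}g,g\rangle_{\omega'}\,d\vol_{\omega'}\leq\langle A_{\omega}^{-1}g,g\rangle_{\omega}\,d\vol_{\omega}$ (both checked by simultaneous diagonalization), one gets $f_\delta$ with $\bar\partial f_\delta=g$ and $\lVert f_\delta\rVert^2_{h,\omega_\delta}\leq\int_X\langle A_{\omega_\delta}^{-1}g,g\rangle_{\omega_\delta}\,d\vol_{\omega_\delta}\leq\int_X\langle A_{\omega}^{-1}g,g\rangle_{\omega}\,d\vol_{\omega}=:M\leq\varepsilon^{-1}\lVert g\rVert^2_{h,\omega}$, a bound independent of $\delta$. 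On each $U\Subset X$ the metrics $\{\omega_\delta\}_{0\leq\delta\leq1}$ are uniformly mutually bounded, so $\{f_\delta\}$ is bounded in $L^2_{h,\omega}(U,E)$; weak compactness plus a diagonal argument over an exhaustion of $X$ yield a sequence $\delta_k\downarrow0$ with $f_{\delta_k}\rightharpoonup f$ weakly in $L^2_{h,\omega,\loc}(X,E)$, and $\bar\partial f=g$ in the sense of currents since $\bar\partial f_{\delta_k}=g$ and $\bar\partial$ is weakly continuous. Finally, for fixed $U\Subset X$ the convergence $\omega_\delta\to\omega$ is uniform on $\overline U$, so together with $|\cdot|^2_{\omega_\delta}\,d\vol_{\omega_\delta}\leq|\cdot|^2_{\omega}\,d\vol_{\omega}$ on $(n,q-1)$-forms one gets $\lVert f_{\delta_k}\rVert^2_{L^2_{h,\omega}(U)}\leq(1+\eta_k)\lVert f_{\delta_k}\rVert^2_{h,\omega_{\delta_k}}\leq(1+\eta_k)M$ with $\eta_k\to0$; weak lower semicontinuity then gives $\lVert f\rVert^2_{L^2_{h,\omega}(U)}\leq M$, and letting $U\nearrow X$ yields $\lVert f\rVert^2_{h,\omega}\leq M\leq\varepsilon^{-1}\lVert g\rVert^2_{h,\omega}$.

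The complete case is in essence a transcription of \cref{thm:L2} and poses no real difficulty; the main obstacle is the limiting step, where one has to (i) establish the two pointwise monotonicity inequalities for $(n,q)$-forms under enlarging the metric, and (ii) argue, using the local uniform comparison of the $\omega_\delta$ and weak lower semicontinuity of the $L^2$-norm, that the sharp constant $\varepsilon^{-1}$ is not lost as $\delta\to0$.
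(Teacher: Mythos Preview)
The paper does not prove this theorem at all; it is simply quoted as a known result of Demailly \cite[Th\'eor\`eme~4.1]{Dem82}, with the complete case attributed to Andreotti--Vesentini. Your proposal is, in outline, precisely Demailly's original argument: treat the complete case via the Bochner--Kodaira--Nakano inequality and the H\"ormander functional analysis (indeed the $\theta=0$ specialization of \cref{thm:L2}), then approximate a general $\omega$ by the complete metrics $\omega_\delta=\omega+\delta\widehat\omega$, use the pointwise monotonicity of $\langle A_{\omega'}^{-1}g,g\rangle_{\omega'}\,d\vol_{\omega'}$ in $\omega'$ on $(n,q)$-forms to get a $\delta$-independent bound, and extract a weak limit. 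So there is nothing to compare against, and your approach is the standard one.

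One minor wrinkle in the last step: you invoke the pointwise inequality $|\,\cdot\,|^2_{\omega_\delta}\,d\vol_{\omega_\delta}\le|\,\cdot\,|^2_{\omega}\,d\vol_{\omega}$ for $(n,q-1)$-forms to justify $\lVert f_{\delta_k}\rVert^2_{L^2_{h,\omega}(U)}\le(1+\eta_k)\lVert f_{\delta_k}\rVert^2_{h,\omega_{\delta_k}}$, but that inequality points the wrong way (it says the $\omega_\delta$-norm is \emph{smaller}). What actually gives the desired bound on $U\Subset X$ is only the uniform comparability $\omega\le\omega_\delta\le(1+\delta C_U)\omega$ that you also mention. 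Alternatively, and this is how Demailly phrases it, one notes that for every fixed $\delta_0>0$ and all $\delta\le\delta_0$ the monotonicity yields $\lVert f_\delta\rVert^2_{h,\omega_{\delta_0}}\le\lVert f_\delta\rVert^2_{h,\omega_\delta}\le M$, extracts the weak limit $f$ in $L^2_{h,\omega_{\delta_0}}$ with $\lVert f\rVert^2_{h,\omega_{\delta_0}}\le M$, and then lets $\delta_0\downarrow 0$ using monotone convergence (the integrand increases to $|f|^2_{h,\omega}\,d\vol_\omega$). Either route closes the argument, so this is cosmetic rather than a genuine gap.
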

This theorem by Demailly is used to solve the $\db$-equation locally. We first recall the notation used in the following proposition and theorem. Let $(X, \omega)$ be a compact K\"ahler manifold and $D=\sum_{i=1}^\ell D_i$ be a simple normal crossing divisor on $X$. Let $(E,\theta,h)$ be a tame harmonic bundle on $X-D$. With the modified Hermitian metric $h_{\bm{a}, N}$ for $E$ and the complete K\"ahler metric $\omega_{\bm{a}, N}$ defined in \cref{lem:choice} and \cref{lem:perturbe}, we have the sheaves of $L^2$ $E$-valued forms $\kL_{(2)}^{p,q}(E)_{h_{\bm{a},N},\omega_{\bm{a},N}}$ defined in \cref{eq:locally integrable2}.  We write $\kL_{(2)}^{p,q}(E)$ instead of $\kL_{(2)}^{p,q}(E)_{h_{\bm{a},N},\omega_{\bm{a},N}}$ for short.
\begin{prop}\label{prop:delbar}
	For any $x\in X$, there is an open set $U\subset X$ (can be made arbitrary small) containing $x$ so that for any  $g\in \kL_{(2)}^{p,q}(E)(U)$ with $q\geq 1$ and $\db_E(g)=0$, there exists a section $f\in \kL_{(2)}^{p,q-1}(E)(U)$ so that $\db_E f=g$.
\end{prop}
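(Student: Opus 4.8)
The plan is to reduce the statement to Demailly's $L^2$-existence theorem \cref{thm:Dem}, via the classical device that converts $E$-valued $(p,q)$-forms into $(n,q)$-forms with values in an auxiliary bundle whose curvature we can control --- and the curvature control is exactly what \Cref{lem:estimate} has been engineered to supply.

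First I would dispose of the case $x\notin D$, which is the classical local $\db$-lemma: there $h_{\bm{a},N}$ is a smooth metric and $\omega_{\bm{a},N}$ a smooth K\"ahler metric, so on a small coordinate ball $B\ni x$ one replaces the metric of $E$ by $h_{\bm{a},N}\,e^{-C|z|^{2}}$ with $C\gg 0$; this makes $E$ (hence, after the twist below, each $T^{n-p}_B\otimes E$) strictly Nakano positive on $B$ relative to $\omega_{\bm{a},N}$, and \cref{thm:Dem} over the pseudoconvex domain $B$ solves the $\db_E$-equation with an $L^2$ estimate. Since the modified metric is mutually bounded with $h_{\bm{a},N}$ on $B$ and $\db_E f=g$ is $L^2$, the resulting solution lies in $\kL^{p,q-1}_{(2)}(E)(B)$.

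For $x\in D$ I would choose an admissible coordinate $(U;z_1,\dots,z_n)$ around $x$ with $D\cap U=(z_1\cdots z_\ell=0)$ and, after shrinking (permissible by \Cref{lem:estimate}), take $U:=U(r)$ with $0<r\ll 1$, so that $U(r)-D=U^*(r)$ is biholomorphic to $(\Delta^*)^\ell\times\Delta^{n-\ell}$, hence Stein, and in particular carries a complete K\"ahler metric. On $U^*(r)$ one has $h_{\bm{a},N}\sim h(\bm{a},N)$ and $\omega_{\bm{a},N}\sim\omega_P$, whence $\kL^{p,q}_{(2)}(E)(U(r))=L^{p,q}_{(2)}(U^*(r),E)_{h(\bm{a},N),\omega_P}$ and it suffices to solve $\db_E$ in the latter space. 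Over $U^*$ the bundle $\Omega^1_{U^*}$ splits as a sum of trivial line bundles, so $\Omega^p_{U^*}$ carries the metric $h_p$ induced by $\omega_P$ of bounded curvature, and the canonical pointwise isometry
\begin{equation*}
\Lambda^{p,q}T^*_{U^*}\otimes E\ \xrightarrow{\ \sim\ }\ \Lambda^{n,q}T^*_{U^*}\otimes V,\qquad V:=K_{U^*}^{-1}\otimes\Omega^p_{U^*}\otimes E\cong T^{n-p}_{U^*}\otimes E=\sE_{n-p},
\end{equation*}
intertwines $\db_E$ with the Dolbeault operator $\db_V$ of the holomorphic bundle $V$, and carries the $L^2$-norm with respect to $(h(\bm{a},N),\omega_P)$ to the $L^2$-norm with respect to $(h_V,\omega_P)$, where $h_V$ is the metric on $\sE_{n-p}$ induced by $h(\bm{a},N)$ and $\omega_P$. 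By \eqref{eq:estimate 2} of \Cref{lem:estimate} (applied with $p$ replaced by $n-p$), for $N\ge N_0$ the pair $(V,h_V)$ is Nakano positive over $U^*$, with $\sn R(h_V)\ge_{\nak}\omega_P\otimes\mathrm{Id}_{V}$, hence the same holds over $U^*(r)$. Given $g\in\kL^{p,q}_{(2)}(E)(U(r))$ with $q\ge 1$ and $\db_E g=0$, its image $\tilde g\in L^{n,q}_{(2)}(U^*(r),V)$ satisfies $\db_V\tilde g=0$, so \cref{thm:Dem} --- with base K\"ahler metric $\omega_P$, $\varepsilon=1$, over the (weakly pseudoconvex) manifold $U^*(r)$ --- produces $\tilde f\in L^{n,q-1}_{(2)}(U^*(r),V)$ with $\db_V\tilde f=\tilde g$ and $\lVert\tilde f\rVert^2\le\lVert\tilde g\rVert^2$. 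Transporting $\tilde f$ back through the isometry gives $f\in L^{p,q-1}_{(2)}(U^*(r),E)_{h(\bm{a},N),\omega_P}$ with $\db_E f=g$; since $\db_E f=g$ is $L^2$, we conclude $f\in\kL^{p,q-1}_{(2)}(E)(U(r))$, with $U:=U(r)$ arbitrarily small.

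The only delicate point --- bookkeeping rather than a genuine obstacle --- is verifying that the twisting isometry above intertwines $\db_E$ and $\db_V$ and matches the two metrics \emph{exactly}, so that the curvature hypothesis required by \cref{thm:Dem} is literally the estimate \eqref{eq:estimate 2}; granting that, the proof is a direct application of \cref{thm:Dem}.
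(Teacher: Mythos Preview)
Your proposal is correct and follows essentially the same route as the paper: reduce $(p,q)$-forms with values in $E$ to $(n,q)$-forms with values in $\sE_{n-p}=T^{n-p}_{U^*}\otimes E$, invoke the Nakano lower bound \eqref{eq:estimate 2} from \Cref{lem:estimate}, and apply Demailly's theorem (\cref{thm:Dem}) over the weakly pseudoconvex domain $U^*(r)$ with base metric $\omega_P$. The paper states the identification $L^{n,q}_{(2)}(U^*,\sE_{n-p})_{h_{\sE_{n-p}},\omega_P}=L^{p,q}_{(2)}(U^*,E)_{h(\bm{a},N),\omega_P}$ directly, while you spell out the underlying holomorphic isometry $\Omega^p\cong K\otimes T^{n-p}$ and verify it intertwines $\db$ and matches the metrics; and for $x\notin D$ the paper simply cites the usual $L^2$-Dolbeault lemma where you give an explicit weight---both harmless elaborations of the same argument.
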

\begin{proof} 
	If $x\notin D$, then we can take an open set $U\subset X-D$ containing $x$ which is biholomorphic to a polydisk, and the theorem follows from the usual $L^2$-Dolbeault lemma. Assume $x\in D$.
	Let $(\tilde{U};z_1,\ldots,z_n)$ be an admissible coordinate around $x$.  By \Cref{lem:estimate}, $\sE_p:=  T_{\tilde{U}^*}^p\otimes E$   equipped with the   $\sC^\infty$-metric $ {h}_{\sE_p}=h_p^{-1}h(\bm{a},N)$  induced by  $h(\bm{a},N)$ and $\omega_P$, satisfying 
	$$
	\sqrt{-1}R(h_{\sE_p})\geq_{\nak} \omega_P\otimes \vvmathbb{1}_{\sE_p}
	$$
	for any $p=0,\ldots,n$.
	Note that $\omega_P|_{\tilde{U}^*(\frac{1}{2})}\sim \omega_{\bm{a},N}|_{\tilde{U}^*(\frac{1}{2})}$ and $ h(\bm{a},N)|_{\tilde{U}^*(\frac{1}{2})}\sim h_{\bm{a},N}|_{\tilde{U}^*(\frac{1}{2})}$. Hence  one has 
	\begin{align}\label{eq:correspondence} L_{(2)}^{n,q}(\tilde{U}^*(\frac{1}{2}),\sE_{n-p})_{h_{\sE_{n-p}},\omega_P}=L_{(2)}^{p,q}(\tilde{U}^*(\frac{1}{2}),E)_{h_{\bm{a},N},\omega_{\bm{a},N}} 
	\end{align}
	for any $p=0,\ldots,n$.
	For any $g\in L_{(2)}^{n,q}(\tilde{U}^*(\frac{1}{2}),\sE_{n-p})_{h_{\sE_{n-p}},\omega_P}$ with $\db (g)=0$, if $q\geq 1$, by \cref{thm:Dem}, there is $f\in L_{(2)}^{n,q-1}(\tilde{U}^*(\frac{1}{2}),\sE_{n-p})_{h_{\sE_{n-p}},\omega_P}$ so that $\db f=g$. The proposition then follows from \eqref{eq:correspondence},  and $\tilde{U}^*(\frac{1}{2})$ is the desired open set $U$ in the proposition.
\end{proof}
Now we are ready to prove that the $L^2$-complex is the desired fine resolution for our tame harmonic bundle.
\begin{thm}\label{thm:quasi}
	The morphism between two complexes in \eqref{dia:quasi} is a  quasi-isomorphism. 
\end{thm}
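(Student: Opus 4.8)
The plan is to show that the morphism of complexes of sheaves in \eqref{dia:quasi} induces an isomorphism on every cohomology sheaf, and the mechanism will be to realise $(\kL^\bullet_{(2)}(E),D'')$ as the total complex of a double complex whose columns are fine resolutions of the terms of ${\rm Dol}(\diae,\theta)$. First I would set up this double complex: by \cref{lem:crucial} we have $\kL^m_{(2)}(E)=\bigoplus_{p+q=m}\kL^{p,q}_{(2)}(E)$, and by the norm estimate \cref{thm:moc} the Higgs field $\theta$ defines a bounded operator $\kL^{p,q}_{(2)}(E)\to\kL^{p+1,q}_{(2)}(E)$, so together with $\db_E\colon\kL^{p,q}_{(2)}(E)\to\kL^{p,q+1}_{(2)}(E)$ we obtain a double complex $\big(\kL^{p,q}_{(2)}(E),\db_E,\theta\big)$ with total differential $D''=\db_E+\theta$. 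Away from $D$ there is nothing new: on $X-D$ the cohomology sheaves of the bottom row of \eqref{dia:quasi} are those of ${\rm Dol}(E,\theta)|_{X-D}={\rm Dol}(\diae,\theta)|_{X-D}$ by the classical $L^2$-Dolbeault lemma (hypoellipticity of $\db_E$), exactly as for the smooth resolution $\sA^\bullet(E)$ recalled earlier. So the substantive point is the behaviour at a point of $D$.

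The key step is to prove that, for each fixed $p$, the column $\big(\kL^{p,\bullet}_{(2)}(E),\db_E\big)$ is a resolution of $\diae\otimes\Omega^p_X(\log D)$; since this is a stalkwise statement I only need a point $x\in D$, where I fix an admissible coordinate $(U;z_1,\dots,z_n)$, shrinkable as in \cref{prop:delbar}. Exactness in positive $q$-degree is precisely the local solvability of $\db_E$ proved in \cref{prop:delbar} (the reverse inclusion being $\db_E^2=0$). The identification of the bottom term, namely $\ker\big(\db_E\colon\kL^{p,0}_{(2)}(E)\to\kL^{p,1}_{(2)}(E)\big)\cong\diae\otimes\Omega^p_X(\log D)$, is the heart of the matter. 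The inclusion ``$\supseteq$'' is \cref{thm:inclusion}: a holomorphic section of $\diae\otimes\Omega^p_X(\log D)$ is a smooth, hence locally $L^2$, $(p,0)$-form annihilated by $\db_E$. For ``$\subseteq$'', a section over $U^*(r)$ is a holomorphic section $s$ of $\Omega^p_{U^*}\otimes E$ that is $L^2$ with respect to $h_{\bm a,N}\sim h(\bm a,N)$ and $\omega_{\bm a,N}\sim\omega_P$; the mean value estimate \cref{prop:C0} then yields $|s|_{h,\omega_P}=O\big(\prod_{i=1}^{\ell}|z_i|^{-a_i-\delta}\big)$ for every $\delta>0$. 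Expanding $s$ in the logarithmic frame $dw_I$ with $w_i=\log z_i$ for $i\le\ell$ and $w_i=z_i$ for $i>\ell$, whose $\omega_P$-norms are bounded below near $D$, the $E$-valued coefficients obey the same bound for $h$, so they are sections of ${}_{\bm{a}}E$ over $U$; by the constraints \eqref{choice 1} and \eqref{choice 2} on $\bm a$ and $\delta$ in \cref{lem:choice} together with the discreteness of the parabolic weights, ${}_{\bm{a}}E=\diae$, and so $s$ is a section of $\diae\otimes\Omega^p_X(\log D)$, the logarithmic poles of the $dw_I$ being absorbed by $\Omega^p_X(\log D)$.

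With the columns identified as resolutions, I would conclude by the spectral sequence of the double complex $\kL^{\bullet,\bullet}_{(2)}(E)$ obtained by filtering in the $\db_E$-direction: its $E_1$-page is concentrated in the row $q=0$, where it is the complex $\big(\diae\otimes\Omega^\bullet_X(\log D),\theta\big)={\rm Dol}(\diae,\theta)$. Hence the spectral sequence degenerates at $E_2$ and gives $\mathcal H^m\big(\kL^\bullet_{(2)}(E)\big)\cong\mathcal H^m\big({\rm Dol}(\diae,\theta)\big)$ for all $m$, and by construction this isomorphism is the one induced by the edge inclusion ${\rm Dol}(\diae,\theta)\hookrightarrow\kL^\bullet_{(2)}(E)$, which is exactly the morphism of \eqref{dia:quasi}. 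The step I expect to be the main obstacle is the inclusion $\ker\db_E|_{\kL^{p,0}_{(2)}(E)}\subseteq\diae\otimes\Omega^p_X(\log D)$: one must upgrade the bare $L^2$-integrability of $s$ to a pointwise growth bound sharp enough (via \cref{prop:C0}) to land in a prolongation ${}_{\bm{a}}E$, and then check that the bookkeeping of the weights $\bm a$, $\bm b$ and $\delta$ fixed in \cref{lem:choice} places this prolongation exactly at $\diae$, i.e.\ strictly between two consecutive parabolic jumps.
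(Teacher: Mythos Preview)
Your proposal is correct and takes essentially the same approach as the paper: both rest on the bidegree decomposition (\cref{lem:crucial}), local $\db_E$-solvability (\cref{prop:delbar}), and the $L^2$-to-$C^0$ estimate (\cref{prop:C0}) combined with the weight constraints of \cref{lem:choice} to identify $\ker\db_E$ in bidegree $(p,0)$ with $\diae\otimes\Omega^p_X(\log D)$. The only difference is presentational---you package the argument as degeneration of the double-complex spectral sequence at $E_2$, while the paper carries out the equivalent staircase reduction by hand, peeling off the top antiholomorphic component of a $D''$-closed section via \cref{prop:delbar} until a holomorphic representative remains, and then treating injectivity and the range $m>n$ separately.
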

\begin{proof}
Pick any $m\in \{0,\ldots,n\}$. We are going to show that $\iota:\ker \theta/\Ima \theta\to \ker D''/\Ima D''$ at $\diae\otimes \Omega^m_X(\log D)$  is an isomorphism. For any $x\in D$, we pick an open set $U\ni x$ as in \Cref{prop:delbar} and set $U^*=U-D$. Indeed, $U^*=\tilde{U}^*(\frac{1}{2})$ where   $(\tilde{U};z_1,\ldots,z_n)$ is an admissible coordinate   around $x$ and thus $h_{\bm{a},N}\sim h(\bm{a},N)$ and $\omega_{\bm{a},N}\sim \omega_P$ on $U^*$.   Pick any $g\in \kL_{(2)}^{m}(E)(U)$  so that $D''g=0$. By \cref{lem:crucial}, we can write $g=\sum_{p+q=m}g_{p,q}$ where $g_{p,q}\in \kL_{(2)}^{p,q}(E)(U)$, and let $q_0$ be the largest integer for $q$ so that $g_{p,q}\neq 0$. By \cref{lem:crucial}, we can decompose $D''g$ into bidegrees, so that
$$
\begin{cases}
	\db_Eg_{m-q_0,q_0}=0\\
	\theta g_{m-q_0,q_0}+\db_E  g_{m-q_0+1,q_0-1}=0\\
	\vdots\\
	\theta g_{p_0-1,m-p_0+1}+\db_E  g_{p_0,m-p_0}=0\\
	\theta g_{p_0,m-p_0}=0
\end{cases}
$$
for which, the operators act in the sense of distribution. Hence $g_{m-q_0,q_0}\in \kL_{(2)}^{m-q_0,q_0}(E)(U)$ with $\db_Eg_{m-q_0,q_0}=0$. Applying \Cref{prop:delbar}, there is a section $f_{m-q_0,q_0-1}\in \kL_{(2)}^{m-q_0,q_0-1}(E)(U)$ so that $\db_E f_{m-q_0,q_0-1}=-g_{m-q_0,q_0}$. By \Cref{lem:crucial}, $D''f_{m-q_0,q_0-1}\in \kL_{(2)}^{m}(E)(U)$, and we define $g':=D''f_{m-q_0,q_0-1}+g\in \kL_{(2)}^{m}(E)(U)$. One thus has $D''g'=0$. Write $g'=\sum_{p+q=m}g'_{p,q}$ where $g'_{p,q}\in \kL_{(2)}^{p,q}(E)(U)$. Note that
$$
\begin{cases}
	g'_{m-q_0,q_0}=\db_E f_{m-q_0,q_0-1}+g_{m-q_0,q_0}=0\\
	g'_{m-q_0+1,q_0-1}=\theta f_{m-q_0,q_0-1} +g_{m-q_0+1,q_0-1} \\
	g'_{m-q_0+2,q_0-2}= g_{m-q_0+2,q_0-2}\\
	\vdots\\
	g'_{p_0,m-p_0}= g_{p_0,m-p_0}
\end{cases}
$$
One can perform the same manner inductively to find $f\in \kL_{(2)}^{m-1}(E)(U)$ so that $g_0=g+D''f\in \kL_{(2)}^{m,0}(E)(U)$ so that $D''g_0=0$. Decomposing $D''g_0$ into bidegrees we get
$$
\db(g_0)=0, \quad \theta(g_0)=0.
$$
By the elliptic regularity of $\db$ one concludes that 
$$
g_0\in \Gamma(U^*, \Omega_{U^*}^m\otimes E|_{U^*}). 
$$
By \eqref{eq:same}, $g_0\in \Gamma(U,\Omega^m_{X}(\log D)\otimes\diae|_U) $, which shows the surjectivity of $\iota$. 
 
Now we prove the injectivity of $\iota$. Let $g \in \Gamma(U,\Omega^m_{X}(\log D)\otimes\diae|_U)\subset \kL_{(2)}^m(E)(U) $ so that $g=D''f$. Write $f=\sum_{p+q=m{-1}}f_{p,q}$ where $f_{p,q}\in \kL_{(2)}^{p,q}(E)(U)$. Then $g=D''(f_{m-1,0})=\theta(f_{m-1,0})$ thanks to the bidegree condition.  Hence
$$
f_{m-1,0} \in \Gamma(U^*,\Omega_{U^*}^{m-1}\otimes E|_{U^*})\cap \kL_{(2)}^{m-1,0}(E)(U).
$$
By \eqref{eq:same} again, $f_{m-1,0}\in \Gamma(U,\Omega^{m-1}_{X}(\log D)\otimes\diae|_U) $. The injectivity of $\iota$ follows. 

When $m>n$, the exactness of $D''$ can be proven in the same way. Let $g\in \kL_{(2)}^{m}(E)(U)$ so that $D''g=0$. Applying \Cref{prop:delbar} once again as the case of $m\leq n$, we can find $f\in \kL_{(2)}^{m-1}(E)(U)$ so that $D''f+g\in \kL_{(2)}^{n,m-n}(E)(U)$. As $\theta (D''f+g)=0$, this implies that $\db_E(D''f+g)=0$, and by \Cref{prop:delbar}  again one can find $h\in \kL_{(2)}^{n,m-n-1}(E)(U)$ so that $D''h=\db_Eh=D''f+g$. This shows the exactness of $D''$ when $m>n$. We complete the proof of the theorem.
\end{proof}

\begin{rem}
To summarize, let us explain    our choice of the perturbation of the metric $h$ by $h_{\bm{a},N}:=h\cdot   \prod_{i=1}^{\ell}|\sigma_i|^{2a_i}_{h_i}\cdot (-\prod_{i=1}^{\ell}\log  |\sigma_i|_{h_i}^2)^{N}$. 
	
	The input of  the factor $\prod_{i=1}^{\ell}|\sigma_i|^{2a_i}_{h_i}$ is to assure that  the  sections of $\diae$ is  $L^2$-integrable, which seems not true for the harmonic metric $h$. However,   $a_i$'s have to be small enough since  holomorphic sections of $E$ which are also $L^2$-integrable with respect to $h_{\bm{a},N}$ only lies on ${}_{\bm{a}}E$.  Due to the semicontinuity of the parabolic structures  by Mochizuki, ${}_{\bm{a}}E=\diae$ only if $a_i$'s are small enough. This is the main context of \Cref{thm:inclusion}. 
	
	The input of $(-\prod_{i=1}^{\ell}\log  |\sigma_i|_{h_i}^2)^{N}$ is to add enough local positivity near $D$ so that one can apply the H\"ormander-Demailly $L^2$-estimate to obtain the $L^2$-Dolbeault lemma locally around $D$. This is \Cref{prop:delbar}. Let us stress here that the fact that $(E,h)$ is \emph{acceptable} due to Mochizuki is essential to perform such modification of metrics. 
\end{rem}
\subsection{Proof of the main theorem}
In this subsection, we will prove the following vanishing theorem for tame harmonic bundle.
\begin{thm}\label{thm:main}
Let $(X,\omega)$ be a compact K\"ahler manifold of dimension $n$ and let $D$ be a simple normal crossing divisor on $X$. Let $(E,{}_{ \bm{a}}E,\theta)$ be the parabolic Higgs bundle on $X$ induced by a  tame harmonic bundle  $(E,\theta,h)$   on $X^*=X-D$ whose Higgs field has nilpotent residues on $D$.   Let $L$ be a line bundle on $X$ equipped with a smooth Hermitian metric $h_L$ 
so that its curvature $\sqrt{-1}R(h_L)\geq 0$ and has at least $n-k$ positive eigenvalues at every point on $X$ as a real (1,1)-form. Let {$B$} be a nef line bundle on $X$.  Then
$$
\mathbb{H}^m\big(X, (\diae\otimes \Omega^\bullet_X(\log D),\theta)\otimes L\otimes {B} \big)=0
$$
for any $m>n+k$.
\end{thm}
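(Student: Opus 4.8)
The plan is to assemble the three main preliminary results: the $L^2$ fine resolution of \cref{thm:quasi}, the solvability criterion \cref{cor:AN}, and the curvature positivity of \cref{prop:positive}. First I would fix, once and for all, the auxiliary data of \cref{lem:choice} — the weights $\bm{a}\in\mathbb{R}^\ell_{>0}$, the small number $\delta>0$, the suitably rescaled smooth metrics $h_i$ on $\sO_X(D_i)$ and the smooth metric $h_P$ on $P$ — together with an integer $N\geq N_0$ as in \cref{lem:estimate}. This fixes the complete K\"ahler metric $\omega_{\bm{a},N}$ on $X^{*}=X-D$, the metric $h_\sL$ on $\sL=L\otimes P|_{X^{*}}$ from \eqref{eq:metric}, and the metric $h_{\bm{a},N}$ on $E$ from \cref{lem:perturbe}.

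The first step is to pass from hypercohomology to $L^2$-cohomology. Since $L$ and $P$ are line bundles on $X$, one has ${\rm Dol}(\diae,\theta)\otimes L\otimes P=\big(\diae\otimes\Omega^\bullet_X(\log D)\otimes L\otimes P,\ \theta\otimes\vvmathbb{1}\big)$, and tensoring the quasi-isomorphism of complexes in \eqref{dia:quasi} (\cref{thm:quasi}) by the locally free sheaf $L\otimes P$, equipped with a smooth metric $h_Lh_P$, produces a quasi-isomorphism from ${\rm Dol}(\diae,\theta)\otimes L\otimes P$ to $\big(\kL^\bullet_{(2)}(E)\otimes L\otimes P,D''\big)$. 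Because $h_Lh_P$ is bounded above and below on the compact $X$, the sheaf $\kL^m_{(2)}(E)\otimes L\otimes P$ is, with this metric, exactly the sheaf $\kL^m_{(2)}$ of \eqref{eq:locally integrable} attached to the Higgs bundle $\big(E\otimes L\otimes P|_{X^{*}},\theta\otimes\vvmathbb{1}\big)$ with the metric $h_{\bm{a},N}h_Lh_P=h\,h_\sL$ and the K\"ahler metric $\omega_{\bm{a},N}$. These sheaves are fine — each is a $\sC^\infty_X$-module, since multiplication by a function smooth on all of $X$ (whose $\bar\partial$ is then bounded in $\omega_{\bm{a},N}$) preserves both local square-integrability and the $L^2$-regularity of $D''$ — so the hypercohomology is computed by global sections:
\begin{equation}\label{eq:reduction}
\mathbb{H}^m\big(X,\,{\rm Dol}(\diae,\theta)\otimes L\otimes P\big)\ \cong\ H^m\Big(\Gamma\big(X,\,\kL^\bullet_{(2)}(E\otimes L\otimes P)\big),\,D''\Big),
\end{equation}
where $\Gamma\big(X,\kL^m_{(2)}(E\otimes L\otimes P)\big)=\{u\in L^m_{(2)}(X^{*},E\otimes L\otimes P)_{h\,h_\sL,\,\omega_{\bm{a},N}}\ :\ D''u\in L^{m+1}_{(2)}\}$.

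The second step is to show the right-hand side of \eqref{eq:reduction} vanishes for $m>n+k$ (it is trivially zero for $m>2n$). Here I would apply \cref{cor:AN} with the tame harmonic bundle $(E,\theta,h)$, a harmonic bundle on the complete K\"ahler manifold $(X^{*},\omega_{\bm{a},N})$, twisted by the line bundle $\sL=L\otimes P|_{X^{*}}$ carrying the metric $h_\sL$; the resulting Higgs bundle is precisely $\big(E\otimes L\otimes P|_{X^{*}},\ \theta\otimes\vvmathbb{1},\ h\,h_\sL\big)$, and it satisfies \cref{assum} because the harmonicity of $h$ gives $\bar\partial_E\theta^{*}_h=0$ and tensoring by a Hermitian line bundle leaves this unchanged. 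The curvature hypothesis \eqref{eq:critical} of \cref{cor:AN} for this $m$ is exactly the inequality \eqref{eq:critical2} of \cref{prop:positive}, which holds for every bidegree $(p,q)$ with $p+q=m>n+k$. Therefore, given any $v\in L^m_{(2)}(X^{*},E\otimes L\otimes P)_{h\,h_\sL,\,\omega_{\bm{a},N}}$ with $D''v=0$, \cref{cor:AN} furnishes $u\in L^{m-1}_{(2)}$ with $D''u=v$; since then $D''u=v\in L^m_{(2)}$, we have $u\in\Gamma\big(X,\kL^{m-1}_{(2)}(E\otimes L\otimes P)\big)$, so $[v]=0$ in the cohomology of \eqref{eq:reduction}. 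Combined with \eqref{eq:reduction}, this gives $\mathbb{H}^m\big(X,{\rm Dol}(\diae,\theta)\otimes L\otimes P\big)=0$ for $m>n+k$.

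The genuine content of the theorem lies in the three inputs already established: \cref{thm:quasi} (the $L^2$ fine resolution of the logarithmic Dolbeault complex, which relies on the mean-value estimate of \cref{prop:C0}, the local $L^2$ $\bar\partial$-solvability of \cref{prop:delbar}, and the matching of the prolongation ${}_{\bm{b}}E=\diae$ with the growth condition), \cref{cor:AN} (the H\"ormander-type $L^2$-estimate transported to Higgs bundles in \cref{thm:L2} via the Bochner--Kodaira--Nakano identity of \cref{lem:Bochner}), and \cref{prop:positive} (the eigenvalue bookkeeping turning the $k$-positivity of $(L,h_L)$ into coercivity of the curvature term $[\sqrt{-1}R(h_\sL),\Lambda_{\omega_{\bm{a},N}}]$ in degrees $>n+k$). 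Accordingly, the only point I would check with care in the assembly is the compatibility of the metrics: the pair $(h_{\bm{a},N}h_Lh_P,\omega_{\bm{a},N})$ implicit in \cref{thm:quasi} must coincide with the pair $(h\,h_\sL,\omega_{\bm{a},N})$ demanded by \cref{cor:AN}, which is exactly what makes the $L^2$-solution produced by the estimate land automatically in the global sections of the resolving complex. I do not expect any further obstacle.
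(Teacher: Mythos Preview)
Your proposal is correct and follows essentially the same route as the paper's own proof: fix the auxiliary data of \cref{lem:choice} and \cref{lem:estimate}, use \cref{thm:quasi} to identify the hypercohomology with the $L^2$-cohomology of the twisted bundle $(E\otimes L\otimes P|_{X^*},\theta\otimes\vvmathbb{1})$ with metric $h_{\bm{a},N}h_Lh_P=h\,h_{\sL}$, and then apply \cref{cor:AN} together with \cref{prop:positive} on the complete K\"ahler manifold $(X^*,\omega_{\bm{a},N})$ to kill the global $L^2$-cohomology in degrees $m>n+k$. The metric-compatibility check you single out at the end is exactly the point the paper also emphasizes, and your justification that the $\kL^{\bullet}_{(2)}$ sheaves are fine (as $\sC^\infty_X$-modules) is a detail the paper leaves implicit.
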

\begin{proof}
	We will use the notations in  \cref{sec:fine}. 
Recall that $(X^*,\omega_{\bm{a},N})$ is a complete K\"ahler manifold. Write $\sL:=L\otimes {B}|_{X^*}$ and we equip it with the metric $g=h_L{h_{{B}}}$ where {$h_{{B}}$} is properly chosen as \cref{lem:choice}.  Then $g$ is the restriction to $X^*$ of a smooth metric on  $X$. We introduce a new Higgs bundle $(\tilde{E},\tilde{\theta},\tilde{h}):=(E\otimes \sL, \theta\otimes   \vvmathbb{1}_{\sL},h_{\bm{a},N}\cdot g)$.  We still use the notation $D'':=\db_{\tilde{E}}+\tilde{\theta}$ abusively, and $D''^*$ denotes  its adjoint with respect to $\tilde{h}$. We will apply \cref{cor:AN} to solve $D''$-equation for this new Higgs bundle.

Note that $h_{\bm{a},N}g=hh_\sL{(\bm{a})}$ by \eqref{eq:metric} and \cref{lem:perturbe}.   By \Cref{prop:positive}, the metrized line bundle $(\sL,h_\sL{(\bm{a})})$ satisfies the condition in \cref{cor:AN} when $m>n+k$. Hence   by \cref{cor:AN} for any section $g{\in} L^m_{(2)}(X^*,\tilde{E})_{\tilde{h},\omega_{\bm{a},N}}$, if $D''g=0$ and $m>n+k$, there exists $f\in L^{m-1}_{(2)}(X^*,\tilde{E})_{\tilde{h},\omega_{\bm{a},N}}$ so that 
$$
D''f=g.
$$ 
 Let $\kL_{(2)}^m(\tilde{E})_{\tilde{h},\omega_{\bm{a},N}}$ be the sheaf on $X$ (rather than on $X^*$!)  of germs of locally $L_2$, $\tilde{E}$-valued $m$-forms, for which both $D''(u)$ (as a distribution) exist weakly as locally $L^2$-forms. Namely, for any open set $U\subset X$, one has
 \begin{align}\label{eq:locally integrable3}
 \kL_{(2)}^m(\tilde{E})(U):=\{u\in L^m_{(2)}(U-D,\tilde{E})_{\tilde{h},\omega_{\bm{a},N}} \mid D'' u\in L_{(2)}^{m+1}(U-D,E)_{\tilde{h},\omega_{\bm{a},N}}  \}
 \end{align}
 Then the above argument proves that the cohomology $H^i$ of the complex of global sections of the sheaves
 $(\kL_{(2)}^\bullet(\tilde{E})_{\tilde{h},\omega_{\bm{a},N}},D'')$ vanishes for $m>n+k$. 
 
 As $g$ is smooth over the whole $X$, the metric $\tilde{h}\sim  h(\bm{a},N)$ near $D$ (fix any trivialization of $L\otimes {B}$).  Hence the  natural inclusion
 \begin{equation}\label{dia:quasi3}
 \begin{tikzcd} [row sep=1.35em, column sep=0.7em]
 \diae\otimes L\otimes  {B} \arrow[r,"\tilde{\theta}"]\arrow[d]&\diae\otimes L\otimes {B}\otimes \Omega_X(\log D)  \arrow[r,"\tilde{\theta}"]\arrow[d]&\cdots  \arrow[r,"\tilde{\theta}"]\arrow[d]&\diae\otimes L\otimes {B}\otimes \Omega^n_X(\log D)\arrow[d]& &\\
 \kL^0_{(2)}(\tilde{E})_{\tilde{h},\omega_{\bm{a},N}}\arrow[r,"D''"]& \kL^1_{(2)}(\tilde{E})_{\tilde{h},\omega_{\bm{a},N}}\arrow[r,"D''"]&\cdots \arrow[r,"D''"]& \kL^n_{(2)}(\tilde{E})_{\tilde{h},\omega_{\bm{a},N}}\arrow[r,"D''"]&\cdots \arrow[r,"D''"]   & \kL^{2n}_{(2)}(\tilde{E})_{\tilde{h},\omega_{\bm{a},N}}
 \end{tikzcd} 
 \end{equation}
 is thus also a quasi-isomorphism by \cref{thm:quasi}.  

As the complex $(\kL_{(2)}^\bullet(\tilde{E})_{\tilde{h},\omega_{\bm{a},N}},D'')$ is a fine sheaf,   its cohomology computes the hypercomology of the complex $(\diae\otimes L\otimes {B}\otimes \Omega^\bullet_X(\log D),\tilde{\theta})$. We thus conclude that $\bH^m(X,(\diae\otimes L\otimes {B}\otimes\Omega^\bullet_X(\log D),\tilde{\theta}))=0$ for $m>n+k$. The theorem is proved. 
\end{proof} 
\begin{rem}\label{rem:Yang}
	Let us show how to derive the log Girbau  vanishing theorem in \cite[Corollary 1.2]{HLWY16} from \cref{main}. {In this remark we use the same notation as that in \cite[Corollary 1.2]{HLWY16}.} With the same setting as \cref{main}, let $(E,\theta,h):=(\sO_{X-D},0,h)$ where $h$ is the canonical metric on the trivial line bundle $\sO_{X-D}$. According to the prolongation of $(E,\theta,h)$ defined in \cref{sec:prolongation}, one has $(\diae,\theta)=(\sO_X,0)$. Hence the Dolbeault complex  in \eqref{eq:dol}
	$$
	{\rm Dol}(\diae,\theta)=\sO_X\xrightarrow{0} \Omega^1_X(\log D) \xrightarrow{0}\cdots\xrightarrow{0}  \Omega^n_X(\log D) 
	$$
	which is a   direct sum of sheaves of  logarithmic $p$-forms shifting $p$ places to the right:
	$$
		{\rm Dol}(\diae,\theta)=\oplus_{p=0}^{n}\Omega^p_X(\log D)[p],
	$$
	where $\Omega^p_X(\log D)[p]$ {is obtained} by shifting the \emph{single degree complex} $\Omega^p_X(\log D)$ in degree $p$.
	Hence if $m>n+k$,	by \cref{thm:main} one has
	\begin{align*}
0=\mathbb{H}^m\big(X, {\rm Dol}(\diae,\theta)\otimes {N\otimes L} \big) 
&=\oplus_{p=0}^{n}H^m(X,\Omega^p_X(\log D)\otimes {N\otimes L}[p])\\
&=\oplus_{p=0}^{n}H^{m-p}(X,\Omega^p_X(\log D)\otimes {N\otimes L}). 
\end{align*}
We thus  conclude that 
	$$
	H^{q}(X,\Omega^p_X(\log D)\otimes {N\otimes L})
	$$
	if $p+q>n+k$. This is  the log Girbau  vanishing theorem  by Huang-Liu-Wan-Yang.
\end{rem}


\subsection{Vanishing theorem for parabolic Higgs bundles}\label{sec:parabolic}
Let $X$ be a complex projective manifold  and let $D$ be simple normal crossing divisor on $X$. 
For a parabolic Higgs bundle $(E,{}_{\bm{a}}E,\theta)$ on    $(X,D)$, its parabolic Chern classes, denoted by $\textnormal{para}$-$c_i(E)$, is the usual Chern class of $\diae$ with a modification along the boundary divisor $D$ (see, e.g., \cite[\S 3]{AHL19} for more details). With a polarization, i.e., an ample line bundle $H$ on $X$, the parabolic degree $\textnormal{para}$-$\textnormal{deg}(E)$ of $(E,{}_{\bm{a}}E,\theta)$ is defined to be $\textnormal{para-}c_1(E)\cdot H^{n-1}$. We say $(E,{}_{\bm{a}}E,\theta)$ \emph{slope stable}  if  for any coherent
torsion free subsheaf $V$ of $\diae$, with $0<\textnormal{rank}
V<\textnormal{rank} \diae=\textnormal{rank} E$ and $\theta(V)\subseteq V\otimes
\Omega^1_X(\log D)$, the condition 
$$   \frac{\textnormal{para-deg}(V)}{\rank(V)}< \frac{\textnormal{para-deg}(E)}{\rank(E)} $$ 
is satisfied, where $V$ carries the induced the parabolic structure from $(E,{}_{\bm{a}}E,\theta)$, i.e. ${}_{\bm{a}}V:=V\cap {}_{\bm{a}}E.$ A parabolic Higgs bundle $(E,{}_{\bm{a}}E,\theta)$ is \emph{poly-stable} if it is a direct sum of slope stable parabolic Higgs bundles. By \cite{IS07},  $(E,{}_{\bm{a}}E,\theta)$ is called \emph{locally abelian} if in a Zariski neighborhood of any point $x\in X$ there is an isomorphism between the underlying parabolic vector bundle  $(E,{}_{\bm{a}}E)$
and a direct sum of parabolic line bundles. 

By the celebrated   Simpson-Mochizuki correspondence  \cite[Theorem 9.4]{Moc06},  a parabolic Higgs bundle $(E,{}_{\bm{a}}E,\theta)$ on $(X,D)$ is poly-stable with trivial parabolic {Chern classes} and    locally abelian if and only if it is   induced by a tame harmonic bundle over $X-D$ defined in \cref{sec:prolongation}. Based on this deep theorem, our theorem can thus be restated as follows. 
\begin{cor} \label{cor:main}
 Let $(E, {}_{\bm{a}}E,\theta)$ be a locally abelian poly-stable parabolic Higgs bundle on  a projective log pair $(X,D)$ with trivial parabolic {Chern classes} so that  the Higgs field $\theta$ has nilpotent residues on $D$.  Let $L$ be a line bundle on $X$ equipped with a smooth metric $h_L$ 
so that its curvature $\sqrt{-1}R(h_L)\geq 0$ and has at least $n-k$ positive eigenvalues. Let {$B$} be a nef line bundle on $X$.   Then  for the weight 0 filtration $\diae$  of $(E,{}_{\bm{a}}E,\theta)$, one has
$$
\mathbb{H}^m\big(X, (\diae\otimes \Omega^\bullet_X(\log D),\theta)\otimes L\otimes {B} \big)=0
$$
for any $m>\dim X+k$.  
\end{cor}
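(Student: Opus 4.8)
The plan is to deduce \cref{cor:main} directly from \cref{main} (equivalently \cref{thm:main}) by feeding in the Simpson--Mochizuki correspondence; there is essentially no new analysis to do. First I would observe that a smooth complex projective variety is a compact K\"ahler manifold, so the hypotheses imposed on $X$, $D$, the metrized line bundle $(L,h_L)$ with $\sqrt{-1}R(h_L)\geq 0$ having at least $n-k$ positive eigenvalues, and the nef line bundle $P$ in \cref{cor:main} are literally the hypotheses of \cref{main}. Thus the only thing that requires argument is the translation between the two kinds of objects: one must check that a poly-stable parabolic Higgs bundle $(E,{}_{\bm a}E,\theta)$ on $(X,D)$ with trivial parabolic degrees which is locally abelian is exactly the parabolic Higgs bundle obtained, via the prolongation of \cref{sec:prolongation}, from a tame harmonic bundle on $X-D$, and that the sheaf called the \emph{weight $0$ filtration $\diae$} of $(E,{}_{\bm a}E,\theta)$ is the same sheaf $\diae$ that occurs in the Dolbeault complex \eqref{eq:dol} of \cref{main}.

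For this I would invoke the Simpson--Mochizuki correspondence \cite[Theorem 9.4]{Moc09}, which provides a tame harmonic bundle $(E|_{X-D},\theta,h)$ on $X-D$ such that the prolongments ${}_{\bm b}E$ of $E|_{X-D}$ by increasing order, defined in \eqref{eq:prolongation}, recover the given parabolic filtration ${}_{\bm a}E$ for $\bm a\in[0,1)^\ell$. When $(E,{}_{\bm a}E,\theta)$ is a genuine direct sum of slope-stable summands one applies the correspondence to each irreducible summand and takes the orthogonal direct sum of the resulting harmonic metrics; a finite direct sum of tame harmonic bundles is again a tame harmonic bundle, and prolongment by increasing order commutes with direct sums, so this causes no trouble. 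In particular the prolongment by order $\bm 0$, i.e. the subsheaf $\diae={}^\diamond E\subset\iota_*(E|_{X-D})$ consisting of sections of sub-polynomial growth with respect to $h$, coincides with the weight $0$ piece of the parabolic structure, and by \cref{sec:prolongation} the Higgs field $\theta$ extends to $\theta\colon\diae\to\diae\otimes\Omega^1_X(\log D)$ with $\theta\wedge\theta=0$, so that ${\rm Dol}(\diae,\theta)$ from \eqref{eq:dol} is unambiguously defined and agrees with the complex $(\diae\otimes\Omega^\bullet_X(\log D),\theta)$ appearing in the statement.

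With these identifications in place, the proof is completed by applying \cref{main} to the tame harmonic bundle $(E|_{X-D},\theta,h)$ together with $(L,h_L)$ and $P$: this gives
$$
\mathbb{H}^m\big(X,{\rm Dol}(\diae,\theta)\otimes L\otimes P\big)=0\qquad\text{for all }m>n+k,
$$
which is precisely the assertion of \cref{cor:main} (recall $n=\dim X$).

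The main --- indeed the only --- point needing care is the bookkeeping of the previous two paragraphs: one must be sure that Simpson--Mochizuki returns the \emph{same} parabolic Higgs bundle one started from, not merely some parabolic bundle with the same restriction to $X-D$, so that $\diae$ really is the weight $0$ filtration of $(E,{}_{\bm a}E,\theta)$, and one must check the compatibility of tameness, of the harmonic-metric construction, and of the prolongation functor with finite direct sums. I would also flag that, in contrast to \cref{main}, the statement of \cref{cor:main} is confined to projective $X$ precisely because this is the generality in which the Simpson--Mochizuki correspondence for parabolic Higgs bundles is available.
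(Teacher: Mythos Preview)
Your proposal is correct and follows exactly the paper's own route: the paper does not give a separate proof of \cref{cor:main} but simply notes that, by the Simpson--Mochizuki correspondence \cite[Theorem 9.4]{Moc09}, a locally abelian poly-stable parabolic Higgs bundle with trivial parabolic degrees is precisely the prolongation of a tame harmonic bundle, so that \cref{main} applies verbatim. Your additional remarks on direct sums and on why projectivity is needed are accurate elaborations of points the paper leaves implicit.
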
 
\begin{rem}
The above corollary essentially generalizes the main theorem \cite[Theorem 1]{Ara19} in which he assumed that $\theta$   is  nilpotent (see \cref{rmk:nilpotency-compare}) and that $L$ is ample. 
\end{rem}

\end{document}